\numberwithin{equation}{section}
\def\C{{\mathbb C}}
\def\N{{\mathbb N}}
\def\Z{{\mathbb Z}}
\def\R{{\mathbb R}}
\def\CP{{\mathbb C\mathbb P}}
\def\eps{{\epsilon}}
\newcommand{\Cal}{\mathcal}
\newcommand{\sminus}{\smallsetminus}
\newcommand{\interior}{\operatorname{int}}
\newcommand{\ind}{\operatorname{Ind}}
\def\res{\mathrm{Res}}
\theoremstyle{plain}
\newtheorem{lemma}{Lemma}[section]
\newtheorem{proposition}[lemma]{Proposition}
\newtheorem{theorem}[lemma]{Theorem}
\newtheorem{corollary}[lemma]{Corollary}
\theoremstyle{definition}
\newtheorem{definition}[lemma]{Definition}
\newtheorem{example}[lemma]{Example}
\newtheorem{remark}[lemma]{Remark}
\theoremstyle{remark}
\title{Remarks on rational vector fields on $\CP^1$ }
\author[M. Klime\v{s}]{Martin Klime\v{s}}
\email{martin.klimes@univie.ac.at}
\author[C. Rousseau]{Christiane Rousseau}
\address{Christiane Rousseau, D\'epartement de
math\'ematiques et de statistique, Universit\'e de Montr\'eal, C.P. 6128,
Succursale Centre-ville, Montr\'eal (Qc), H3C 3J7, Canada.}
\email{rousseac@dms.umontreal.ca}
\thanks{The second author is supported by NSERC in Canada. }
\subjclass[2010]{37F75, 32M25, 32S65, 34M99} 
\begin{document}
\tikzset{->-/.default=0.5, ->-/.style={decoration={markings, mark=at position #1 with {\arrow{>}}},postaction={decorate}}}

\date{\today}

\begin{abstract} In this paper we introduce geometric tools to study the families of rational vector fields of a given degree over $\CP^1$. To a generic vector field of such a parametric family we associate several geometric objects: a periodgon, a star domain and a translation surface. These objects generalize objects with the same name introduced in previous works on polynomial vector fields. They are used to describe the bifurcations inside the families. We specialize to the case of rational vector fields of degree $4$. 
\end{abstract}

\maketitle

\section{Introduction}
Polynomial and rational vector fields are important for many questions in holomorphic dynamics. For instance, the formal normal form of a parabolic point of a germ of holomorphic diffeomorphism of codimension $k$ (i.e. a multiple fixed point of multiplicity $k+1$) is given by the time-one map of a rational vector field $\dot z = \frac{z^{k+1}}{1+az^k}$. This is also true for the unfoldings: the formal normal form of a generic $k$-parameter parabolic point of a germ of holomorphic diffeomorphism of codimension $k$ is given by the time-one map of a family of rational vector fields \begin{equation}\dot z = \frac{P_\eps(z)}{1+a(\eps)z^k}= \frac{z^{k+1}+ \eps_{k-1} z^{k-1} + \dots +\eps_0}{1+a(\eps)z^k},\label{model_family}\end{equation}
and generalizations exist for non generic families and/or generic families depending on $m\neq k$ parameters. Similar theorems exist for resonant fixed points of a holomorphic diffeomorphism (when the multiplier at the fixed point is a root of unity of order $q$): there the normal form is the composition of a rotation of order $q$ with the time-one map of a rational vector field. 

For many of these problems, the fixed points are all in a small neighborhood of the origin while the poles are far from the origin. Hence the (real) phase portrait of the vector field  \eqref{model_family} (i.e. with real time) is close to the one of the polynomial vector field $\dot z = P_\eps(z)$. But there are new problems where this is not the case. For instance, when studying the unfolding of a resonant irregular singular point of a linear system, one of us had to deal with the system \eqref{model_family} with $k=1$ and $a$ that could be as large as $\frac1{\eps}$ \cite{Kl}.

Douady, Estrada and Sentenac \cite{DS} initiated geometric methods to study the real dynamics of complex polynomial vector fields, by introducing an invariant of analytic classification for generic polynomial vector fields. This invariant, later generalized for all polynomial vector fields (see for instance \cite{BD}), is composed of two parts: a topological (or combinatorial) part, and an analytic part. The set of structurally stable monic  polynomial vector fields is divided in $C_k= \frac{\binom{2k}{k}}{k+1}$ open sets separated by bifurcation surfaces where the Douady-Estrada-Sentenac invariant is discontinuous. 
Some attempts have been made to generalize this invariant to rational vector fields under the generic condition that they do not exhibit an annulus of periodic orbits \cite{Tom1}. But in general, the global dynamics of rational vector fields is not yet fully understood. 

In the paper \cite{CR}, a new tool was introduced to study polynomial vector fields, namely the \emph{periodgon}, or polygon of the periods. First introduced for the $1$-parameter family of vector fields $\dot z = z^{k+1}-\eps$, the tool was generalized in \cite{KR} to all polynomial vector fields, and it was analyzed in detail in the particular case of the $2$-parameter family of vector fields $\dot z = z^{k+1}+\eps_1z+\eps_0$. 
When all singular points of a polynomial vector field are simple  the periodgon is constructed as follows: First we associate to each simple singular point its \emph{periodic domain}, defined as the basin of the center at the singular point in the properly rotated vector field. 
The \emph{periodgon} is defined as the image in the time variable of the complement of the union of the periodic domains of all singular points.  It takes the form of a ``polygon'' on the translation surface of the time coordinate. 
The periodgon represents the ``core'' part of the dynamics from which the complete information can be read. Its construction has also its degenerate equivalent in case of vector fields with multiple singular points.
In a parametric setting the periodgon bifurcates when the boundary of some periodic domain is the union of several  homoclinic loops. 
In all the examples studied in \cite{CR}, \cite{KR} and \cite{R19}, the bifurcation diagram of the periodgon produces less open strata than that of the Douady-Estrada-Sentenac invariant. 

In this paper we generalize the construction of the periodgon to rational vector fields and we explore its role in describing the structure of the family of rational vector fields of degree $n$.  We also introduce a notion of \emph{translation model} of the vector field, as the surface $\CP^1\sminus\{\text{singularities of the vector field}\}$ equipped with a translation structure given by the rectifying coordinate for the vector field. This is a natural object that has been previously studied by Muciño-Raymundo and Valero-Vald\'es \cite{Mucino,Mucino-Valero,Mucino-Valero2}, and  it appears implicitly also in the work of Douady, Estrada and Sentenac \cite{DS}, and in that of Branner and Dias \cite{BD}.
The translation model is also obtained by regluing the periodgon and attaching semi-infinite cylinders to its sides in place of the periodic domains.

There are immediate differences between polynomial and rational vector fields. A first one is that rational vector fields have in general several poles and, generically,  the periodic domain of each singular point is attached to one of them. Hence we have configurations of periodic domains attached to the different poles, and we study these configurations. A second difference is that rational vector fields can have annuli of periodic solutions not included in the basin of  a center. 
Also, while the periodgon of a rational vector field that we define below will be generically unique, the chosen cuts are less natural than in the polynomial case since they are determined not only by the planar organization of the periodic domains, but also by their length. 
However, the associated translation model is unique and canonical, and it is the important invariant.

After introducing the definitions of periodgon and translation model, and stating some general theorems regarding rational vector fields of degree $d$, we study the case $d=4$. 

This leads to us ask the following general questions.
\bigskip

\noindent{\bf Question 1.} What are the possible configurations of periodic domains attached to the poles  in a rational vector field of degree $d$,  i.e. the partition of periodic domains between the different poles? What are the maximal and the minimal number of  periodic domains attached to a pole  depending on its order?  As a particular case,  can we have configurations with no periodic domain attached to a pole? 
\bigskip

\noindent{\bf Question 2.}  How many open sets in parameter space are necessary to cover the set of all rational vector fields of degree $d$ having a generic periodgon? For instance, the bifurcation creating an annulus of periodic solutions does not need a change of open set in parameter space to describe the periodgon. 

\bigskip \noindent{\bf Question 3.}  How many \lq\lq geometric types\rq\rq\ of periodgon can we have for generic rational vector fields of degree $d$? 
In this paper we will show that if  $d=4$, this number is $3$ (Theorem~\ref{thm:types}).

\bigskip \noindent{\bf Question 4.}  How many annuli of periodic solutions not surrounding a center are possible for a  rational vector field of degree $d$, and what are their relative positions if several are possible?

\section{General theory of rational vector fields on $\CP^1$}

We consider a rational vector field of the form 
$$\dot z = \frac{P(z)}{Q(z)}$$
on $\CP^1$. Since the Euler characteristic of $\CP^1$ is equal to $2$, it follows from the Poincar\'e-Hopf index theorem that the number of zeros minus the number of poles when counted with multiplicity is equal to $2$. Using a Moebius transformation we can suppose that $\infty$ is a regular point, in which case we can suppose that $\deg(P)-\deg(Q)=2$ and we say that the \emph{degree of the vector field} is the degree of $P$. 
When   $\deg(P)=2$, the vector field is polynomial and can be brought by an affine change of coordinate to $\dot z = z^2-\eps$. When $\deg(P)=3$, it is better to move the pole at infinity, thus transforming the vector field in a polynomial vector field. The study of cubic polynomial vector fields was done in \cite{R}. Hence the first truly rational case occurs when $\deg(P)= 4$. It will be studied below in Section~\ref{sec:deg4}.

\subsection{Singular points and poles} 
We consider a rational vector field $\dot z =V(z)= \frac{P(z)}{Q(z)}$ with $P$ and $Q$ relatively prime and $\deg(P)-\deg(Q)=2$. The following propositions are well known: see for instance \cite{BriTho, DS, GGJ1, Tom2}.
\\

\begin{proposition}~
\begin{enumerate}[wide=0pt, leftmargin=\parindent]
\item  The vector field is analytically linearizable in the neighborhood of any simple singular point $z_j$, which can only be:
\begin{itemize}
	\item a focus if $V'(z_j)\notin \R\cup i\R$;
	\item a center if $V'(z_j)\in i\R\setminus\{0\}$;
	\item a radial node if $V'(z_j)\in \R\setminus\{0\}$;
\end{itemize}

\item Multiple singular points are  parabolic points. The local analytic type of a parabolic singular point $z_j$ of order $k+1$ (codimension $k$) is completely determined by its order and the its period (or dynamical residue) $\nu_j=2\pi i\,\res_{z=z_j}\frac{dz}{V(z)}$; 
in particular, it is locally biholomorphically equivalent to 
$\dot z= \frac{(z-z_j)^{k+1}}{1+\frac{\nu_j}{2\pi i}(z-z_j)^k}$.
It has $2k$ sepal zones, separated alternatingly by $k$ \emph{attractive} and $k$ \emph{repulsive directions} 
given by $\Im\left((z-z_j)^k\right)=0$
(see Figure~\ref{parabolic-pole}(a)). 

\item The poles are given by the zeros of $Q(z)$. In the neighborhood of a pole $p_j$ of order $k-1$, the vector field is analytically conjugate to $\dot z = \frac{1}{(z-p_j)^{k-1}}= \frac{(\bar z-p_j)^{k-1}}{|z-p_j|^{2k-2}}$. 
\end{enumerate}
\end{proposition}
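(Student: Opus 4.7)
The plan is to prove the three items separately, using the meromorphic one-form $\omega = dz/V(z)$ and its local antiderivative (the rectifying chart) as the unifying tool.

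For (1), I would start by observing that at a simple zero $z_j$ one has $V'(z_j) = \lambda \neq 0$, and the one-dimensional version of Poincar\'e's linearization theorem applies automatically since the resonance conditions $\lambda = m\lambda$ with $m \geq 2$ are impossible. This yields an analytic chart in which the vector field becomes $\dot w = \lambda w$, with real-time flow $w_0 e^{\lambda t}$. The trichotomy $\lambda \in \R\setminus\{0\}$, $\lambda \in i\R\setminus\{0\}$, or $\lambda \notin \R\cup i\R$ then gives radial rays, closed circles of period $2\pi/|\lambda|$, and logarithmic spirals respectively, matching the stated node/center/focus classification.

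For (2), I would work with $\omega = dz/V$, which near a zero of order $k+1$ of $V$ has a pole of order $k+1$ with Laurent expansion $\omega = \bigl(\sum_{j=1}^{k+1} c_j (z-z_j)^{-j}\bigr)dz + (\text{holomorphic})$. First I would extract the leading coefficient $c_{k+1}$ and the residue $c_1 = \nu_j/(2\pi i)$. Next I would invoke the classical local classification of such one-forms (equivalently of codimension-$k$ parabolic germs of vector fields, going back to Dulac and Kimura and reviewed in the references cited), which states that the pair $(c_{k+1}, c_1)$ is a complete analytic invariant under local biholomorphisms fixing $z_j$. A direct computation on $\dot z = (z-z_j)^{k+1}/(1+\tfrac{\nu_j}{2\pi i}(z-z_j)^k)$ shows that its associated $\omega$ equals $(z-z_j)^{-k-1}dz + \tfrac{\nu_j}{2\pi i}(z-z_j)^{-1}dz$, so the leading coefficient can be rescaled to $1$ and the classification theorem identifies the original germ with the displayed normal form. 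For the sepal structure, I would use the leading-order asymptotic $\dot z \sim (z-z_j)^{k+1}$: integrating gives $t \sim -1/(k(z-z_j)^k)$, so $\Re t \to +\infty$ forces $(z-z_j)^k \in \R_{<0}$ and $\Re t \to -\infty$ forces $(z-z_j)^k \in \R_{>0}$, producing $k$ attracting and $k$ repelling directions all contained in the locus $\Im\bigl((z-z_j)^k\bigr) = 0$.

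For (3), at a pole $p_j$ of $V$ of order $k-1$ the form $\omega$ is holomorphic with a zero of order $k-1$ at $p_j$, so its antiderivative $\tau(z) = \int_{p_j}^z \omega$ is holomorphic and vanishes to order $k$, giving $\tau(z) = (z-p_j)^k \phi(z)$ with $\phi$ nonvanishing and holomorphic. Choosing any branch of $(k\phi)^{1/k}$, the map $w = p_j + (z-p_j)(k\phi(z))^{1/k}$ is a local biholomorphism conjugating $V$ to $\dot w = 1/(w-p_j)^{k-1}$; the $k$ available branches reflect the $k$ hyperbolic sectors meeting at the pole and all lead to the same normal form. The identity $1/(z-p_j)^{k-1} = (\bar z - p_j)^{k-1}/|z-p_j|^{2k-2}$ is only a rewriting to expose the vector geometry.

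The main obstacle is the local analytic classification invoked in (2): proving that no formal invariant beyond $(c_{k+1}, c_1)$ survives and that the formal normalizing series converges. This is the classical Dulac/Kimura normal form theorem for codimension-$k$ parabolic vector fields, and I would either cite it directly (as the authors do via \cite{BriTho}) or reproduce the standard degree-by-degree formal elimination together with a Cauchy majorant estimate. Items (1) and (3) are comparatively routine once the rectifying chart is in place.
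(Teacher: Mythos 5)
The paper does not actually prove this proposition: it is stated as well known, with the proof deferred to the cited references (\cite{BriTho, DS, GGJ1, Tom2}). Your sketch supplies a correct, self-contained argument along the standard lines those references use, organized around the time form $\omega = dz/V$ and its local antiderivative, so there is nothing to reconcile with the paper's (nonexistent) argument. Items (1) and (3) are fine as written: in dimension one the eigenvalue $\lambda\neq 0$ is automatically nonresonant and in the Poincar\'e domain (equivalently, $w=\exp(\lambda\int dz/V)$ is an explicit convergent linearizing chart), and your construction $w = p_j + (z-p_j)\bigl(k\phi(z)\bigr)^{1/k}$ at a pole is exactly the right normalization of the antiderivative $\tau(z)=(z-p_j)^k\phi(z)$.

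One point in (2) should be reworded, though it does not affect the conclusion. You assert that the pair $(c_{k+1}, c_1)$ is a complete analytic invariant under local biholomorphisms fixing $z_j$, and then in the next breath rescale $c_{k+1}$ to $1$. These two statements are incompatible: under a general change of chart $w=\alpha(z-z_j)+\cdots$ the leading coefficient transforms as $c_{k+1}\mapsto \alpha^{k}c_{k+1}$ (only the residue $c_1$ and the pole order of $\omega$ are genuine invariants of the one-form), so $c_{k+1}$ is an invariant only for changes tangent to the identity. The correct statement of the classification is that the complete invariant under all local biholomorphisms is the pair $(k,\nu_j)$, precisely because the $(\C^*)$-action on $c_{k+1}$ is transitive and allows the normalization $c_{k+1}=1$. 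With that correction the identification with the displayed normal form $\dot z = (z-z_j)^{k+1}\big/\bigl(1+\tfrac{\nu_j}{2\pi i}(z-z_j)^k\bigr)$ and the sepal-direction computation via $t\sim -1/(k(z-z_j)^k)$ go through as you describe.
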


\begin{figure}\begin{center}
\subfigure[A parabolic point of order $4$]{\includegraphics[width=4.5cm]{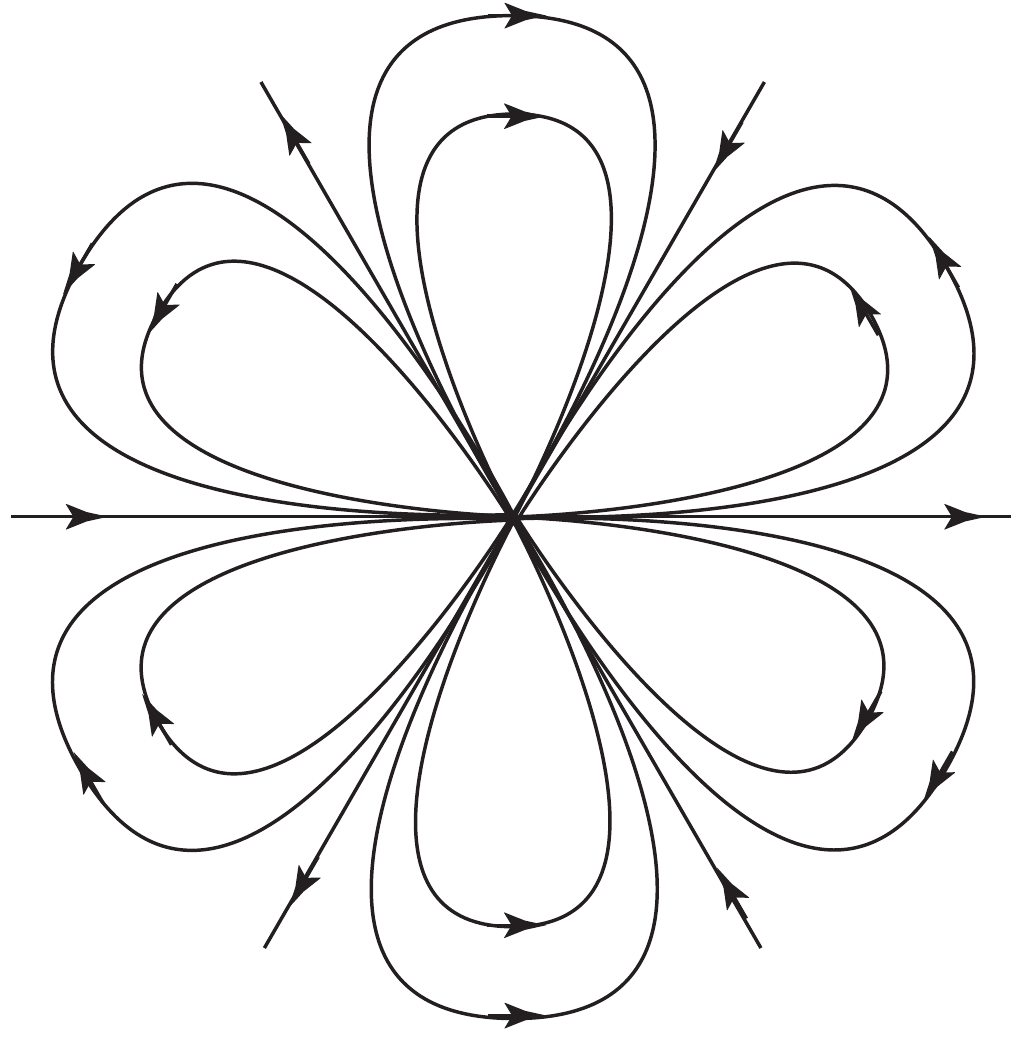}}\qquad\qquad 
\subfigure[A pole of order $2$]{\includegraphics[width=5cm]{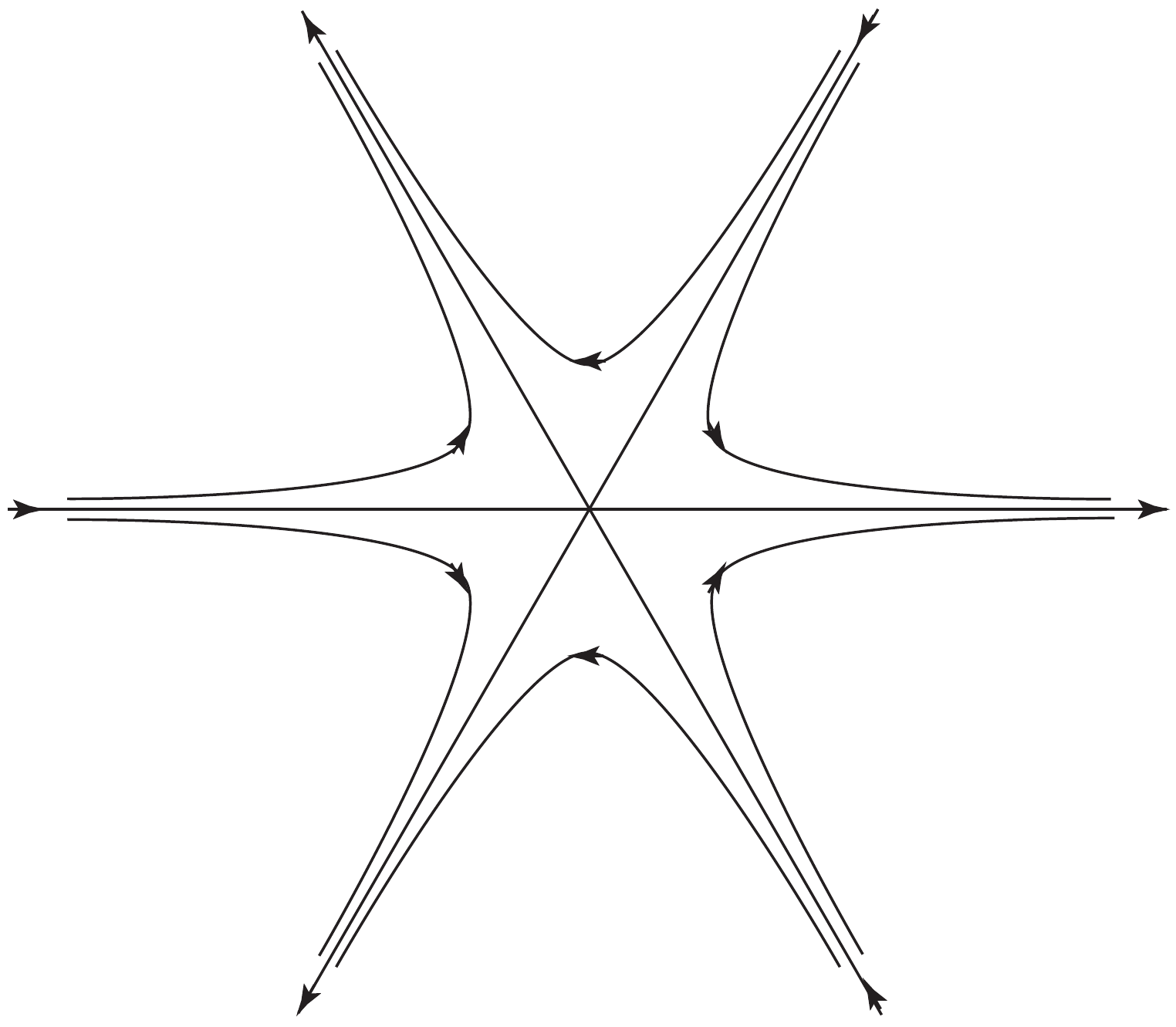}}
\caption{The phase portraits in the neighborhood of a parabolic point and in the neighborhood of a pole.}\label{parabolic-pole}\end{center}\end{figure}

\subsection{Global organization of trajectories} 
The following proposition is well known (for instance \cite{Ben},\cite[Theorem 2]{Ha66}, \cite[Theorem 9.4]{Stre}).\\

\begin{proposition}
\begin{enumerate}[wide=0pt, leftmargin=\parindent]
\item Every periodic trajectory of a rational vector field on $\CP^1$ belongs to a maximal open domain consisting of periodic trajectories (all of the same period)
and possibly of a center equilibrium point, the boundary of which is formed by a union of homoclinic or heteroclinic separatrices. 
\item  All non-periodic trajectories start and terminate at either a singular point or a pole. 
\end{enumerate}
\end{proposition}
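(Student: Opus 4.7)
The plan is to use the rectifying coordinate $\tau = \int \frac{dz}{V(z)}$, a local biholomorphism off the singularities and poles of $V$ that conjugates the flow to the trivial translation $\dot\tau = 1$. The key invariant it yields is that, by Cauchy's theorem, the period of any periodic trajectory $\gamma$, namely $T_\gamma = \oint_\gamma \frac{dz}{V(z)}$, depends only on the free homotopy class of $\gamma$ in $\CP^1$ minus the singularities and poles of $V$; in particular, freely homotopic periodic trajectories carry the same period.

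For part (1), I would start from a periodic trajectory $\gamma_0$ and show it lies in an open neighborhood filled by periodic trajectories of the same period. This is immediate from the rectifying chart on a short transversal $\Sigma$ at a point of $\gamma_0$: the first-return map is a shift by the period, so every trajectory sufficiently close to $\gamma_0$ closes up after the same time. Taking the union of all such open neighborhoods of periodic trajectories of this fixed period produces a maximal open domain $\mathcal{A}$, which is topologically an annulus or a disk containing a single equilibrium. A boundary trajectory of $\mathcal{A}$ cannot pass through a regular point, for otherwise the same local argument would extend $\mathcal{A}$ across it, contradicting maximality; hence each boundary trajectory has both its $\alpha$- and $\omega$-limits in the singular locus and is a homoclinic or heteroclinic separatrix. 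Therefore $\partial\mathcal{A}$ is the corresponding finite graph. When $\mathcal{A}$ is a disk, the local normal forms recalled in the previous proposition force the enclosed equilibrium to be a center (no other simple singular type supports an open neighborhood of closed orbits, and parabolic types are handled by the sepal description).

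For part (2), let $\gamma$ be a non-periodic trajectory and consider its $\omega$-limit set $\omega(\gamma)\subset\CP^1$, which is nonempty, compact, connected, and flow-invariant. I argue by contradiction that $\omega(\gamma)$ contains no regular point. If $z_0\in\omega(\gamma)$ were regular, the full orbit $\gamma_{z_0}$ through $z_0$ would lie in $\omega(\gamma)$. If $\gamma_{z_0}$ is periodic, part (1) supplies an open annulus of periodic orbits around it, so $\gamma$ would eventually enter this annulus and therefore be periodic itself, contradicting the assumption. If $\gamma_{z_0}$ is non-periodic, a transversal at $z_0$ together with the rectifying chart (in which trajectories are parallel horizontal lines and cannot accumulate on one another within a single chart) and the classical Poincar\'e--Bendixson monotonicity on $\CP^1$ forces $\gamma$ to coincide with $\gamma_{z_0}$, again a contradiction. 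Hence $\omega(\gamma)$ is contained in the finite singular locus and, being connected, reduces to a single singular point or pole. The argument for $\alpha(\gamma)$ is identical.

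The main obstacle is to make the monotonicity/transversal step airtight in the presence of the various types of poles and parabolic singularities that $V$ may possess, in particular to rule out accumulation of $\gamma$ on a homoclinic/heteroclinic graph through a singularity without convergence to a single point. I would handle this by combining the local phase portraits recorded in the previous proposition with the translation-surface picture supplied by the rectifying coordinate (where horizontal leaves on a simply punctured sphere cannot be recurrent), following the classical treatments in \cite{Ben}, \cite{Ha66}, \cite{Stre}.
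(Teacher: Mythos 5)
The paper itself offers no proof of this proposition: it is stated as well known, with references to Benziger, H\'ajek and Strebel, so there is no in-house argument to compare against. Your sketch follows the standard route of those references (rectifying coordinate, homotopy invariance of $\oint dt$, Poincar\'e--Bendixson on the sphere), and the strategy is the right one; but two steps, as written, do not hold up. In part (1), the assertion that ``a boundary trajectory of $\mathcal{A}$ cannot pass through a regular point'' is false: the homoclinic and heteroclinic separatrices that actually form $\partial\mathcal{A}$ consist entirely of regular points of the vector field. The correct statement is that a regular point $z_0\in\partial\mathcal{A}$ whose orbit survives for real time $T_0$ (the common period of the orbits of $\mathcal{A}$) is fixed by the time-$T_0$ flow map by continuity, hence lies on a closed orbit and, by your local argument, in $\mathcal{A}$ --- a contradiction; so every regular boundary orbit must reach a pole in finite time in at least one direction. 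One must then still exclude that such a boundary orbit tends to a focus, node or parabolic point in the other direction, which follows because the closed orbits of $\mathcal{A}$ have the fixed finite period $T_0$ whereas the travel time along such an arc is infinite. Without these two points the conclusion about $\partial\mathcal{A}$ does not follow from your premise.

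In part (2), the case you yourself flag as ``the main obstacle'' is where the content lies, and it is not closed. The claim that monotonicity ``forces $\gamma$ to coincide with $\gamma_{z_0}$'' is not what Poincar\'e--Bendixson gives; what it gives is that if $\omega(\gamma)$ is neither a point nor a closed orbit, it is a polycycle $\Gamma$ whose arcs are pole-to-pole connections (arcs ending at foci, nodes or parabolic points are excluded as above). The decisive step, which your sketch only gestures at, is a period computation in the $t$-chart: if $\gamma$ crosses a vertical transversal at a monotone sequence of heights $y_n\to y_*$, the loop $c_n$ formed by the orbit segment between consecutive crossings and a piece of the transversal satisfies $\oint_{c_n}dt=s_n+i(y_n-y_{n+1})$ with $s_n>0$ the real travel time; for large $n$ the loop $c_n$ is homotopic to $\Gamma$ in $\CP^1\sminus\{P=0\}$, and $\oint_\Gamma dt$ is real (a finite sum of real travel times along the arcs, $dt$ being holomorphic at the poles of the vector field), which forces $y_n=y_{n+1}$ and hence periodicity of $\gamma$, a contradiction. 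Spelling this out is what replaces the appeal to the references and completes the proof.
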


\begin{proposition}\label{configurations} \cite{Tom2} 
\begin{enumerate}[wide=0pt, leftmargin=\parindent]
\item All separatrices of poles either land at a singular point or merge with another separatrix in a homoclinic or heteroclinic connection. 
\item At least one separatrix of some pole lands at each singular point of focus or node type, and at least one separatrix of some pole lands at a parabolic point tangent to each attractive/repulsive direction.  \end{enumerate}
\end{proposition}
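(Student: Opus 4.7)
Part~(1) follows immediately from the preceding proposition. A separatrix of a pole $p$ is by definition a trajectory reaching $p$ in finite time, and since $V$ blows up at $p$ it is non-periodic. Hence by the preceding proposition (statement~(2)), its other endpoint is either a singular point --- where the separatrix lands --- or another pole $p'$ (possibly $p'=p$), where it joins with the corresponding separatrix of $p'$ to form a single trajectory connecting two poles, i.e., a homoclinic or heteroclinic pole connection.

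For Part~(2) I handle the focus/node case; the parabolic case follows by applying the same argument separately to each of the $2k$ sepal zones at a parabolic point of codimension~$k$, keyed to each attractive or repulsive direction. By time-reversal assume $z_j$ is a sink focus or node, and let $B\subset\CP^1$ be its open basin of attraction. Then $B$ is backward invariant, since $\phi_t(p)\to z_j$ implies $\phi_{t-s}(p)\to z_j$ for every $s\geq 0$ for which the backward flow is defined, so $\phi_{-s}(p)\in B$. Pick $p\in B$ close to $z_j$ and flow backward. By the preceding proposition, this backward orbit either exits through a pole in finite time --- in which case the orbit, read forward, is a separatrix of that pole landing at $z_j$ and we are done --- or it is defined for all $t<0$ with $\alpha$-limit in $\partial B$.

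Suppose for contradiction that the second alternative occurs for every $p\in B$. Then $\bar B$ is a compact subset of $\CP^1\sminus\{\text{poles of }V\}$, the flow is complete on it, and a Poincar\'e--Bendixson-type analysis gives that every $\alpha$-limit is a fixed point or a periodic orbit/graphic lying in $\partial B$. Since sinks and centers cannot attract backward orbits from an open set, the only possible fixed-point $\alpha$-limits are source foci/nodes and repulsive directions of parabolic points; each of these attracts an open set of backward orbits, and distinct such sets are disjoint, so connectedness of $B$ forces $B\subseteq B_{\mathrm{rep}}(z_i)$ for a single source $z_i$ (or a single repulsive sepal at a parabolic point).

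The \emph{main obstacle} is to contradict this last configuration. The plan is to pass to the rectifying coordinate $T=\int dz/V$: the punctured basin $B\sminus\{z_j\}$ is biholomorphic to a half-cylinder of period $\nu_j=2\pi i/V'(z_j)$ with its end at $z_j$, and the hypothesis that every backward orbit accumulates at $z_i$ forces $B$ to be a full bi-infinite translation cylinder whose other end, at $z_i$, has period $\nu_i=2\pi i/V'(z_i)$, lying in the translation surface on $\CP^1\sminus\{\text{zeros of }V\}$ without meeting a cone singularity on its boundary. The cone singularities of this translation surface are precisely the poles of $V$, so a pole-free embedding of such a bi-infinite cylinder would require both $\nu_i=\pm\nu_j$ and a global consistency condition. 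Combining this rigidity with the period relation $\sum_k 1/V'(z_k)=0$ (residue theorem applied to $1/V$, using $V\sim z^2$ at infinity) yields the required contradiction: the global translation surface cannot accommodate such a cylinder without at least one cone point on its boundary. Therefore a pole must lie on $\partial B$, and a separatrix of that pole lands at $z_j$, completing the proof.
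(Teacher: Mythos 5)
The paper does not actually prove Proposition~\ref{configurations}: it is quoted from Tomasini's thesis \cite{Tom2}, so there is no in-house argument to compare against and your proposal must stand on its own. Part~(1) is fine. In Part~(2) your overall strategy for the hard step (pass to the rectifying coordinate, use that the cone points of the translation surface are exactly the poles) is the right one, but the argument as written has genuine gaps. First, the dichotomy ``either some backward orbit from the basin $B$ reaches a pole in finite time, or every backward orbit is complete and $\bar B$ is a compact subset of $\CP^1\sminus\{\text{poles}\}$'' is false: the only orbits that reach a pole $p$ in backward time are its finitely many outgoing separatrices, and these may all lie in $\partial B$ rather than in $B$, so $p$ can sit on $\partial B$ while every backward orbit from $B$ is complete. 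Moreover, even once a pole is located on $\partial B$ you are not done --- you must still show that one of its separatrices actually lands at $z_j$ rather than merely bounding a sector adjacent to $B$. Second, your Poincar\'e--Bendixson step silently discards the case where the $\alpha$-limit is a graphic, i.e.\ a polycycle of saddle connections through poles; such polycycles genuinely occur for these vector fields (they bound the periodic and parabolic domains throughout the paper) and can be one-sided limit sets, so this case cannot be dropped without comment.

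Third, the step you yourself label the ``main obstacle'' is not closed, and the proposed closing mechanism cannot work as stated. You claim a contradiction from ``$\nu_i=\pm\nu_j$ plus a global consistency condition'' combined with the residue relation $\sum_k 1/V'(z_k)=0$; but for $\dot z=z^2-\eps$ the basin of the sink really is a pole-free bi-infinite cylinder with $\nu_i=-\nu_j$, the residue relation holds, and no contradiction arises --- the proposition is simply vacuous there because there are no poles. So the contradiction must come from the existence of at least one pole (total pole order $d-2\geq 1$), not from the period relation. The way to finish is the Douady--Estrada--Sentenac-type extension argument: the punctured local basin of $z_j$ is a half-infinite cylinder of circumference $\nu_j$ in the $t$-coordinate; push its boundary circle backwards for as long as the developed image stays embedded and misses the cone points. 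If the extension is obstructed, the obstruction is a cone point, i.e.\ a pole, and the horizontal forward trajectory from it is a separatrix landing at $z_j$; if it is never obstructed, the image together with its two ends is an open-and-closed subset of $\CP^1$, hence all of $\CP^1$, forcing $d=2$. Making precise why the extension can only be obstructed at a cone point (and not, say, by non-injectivity at a regular point or by accumulation on a polycycle) is exactly the content that still needs to be supplied.
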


The following lemma (see for exemple \cite[Theorem 1]{Ha66}) is a simple consequence of Poincar\'e-Hopf index theorem. 

\begin{lemma}\label{lemma:configurations}  
Each periodic orbit of a rational vector field surrounds $n+1$ zeros and $n$ poles for some $n\in \Z_{\geq0}$ (multiplicities taken into account).
\end{lemma}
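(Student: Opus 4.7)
The plan is to apply the Poincaré--Hopf index theorem (or equivalently, the argument principle) to the closed disk cut out from $\CP^1$ by the periodic orbit, and then exploit the fact that along a periodic orbit the vector field is tangent to the curve, so the relevant winding number is simply the turning number of a simple closed curve.

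First, I would note that a periodic orbit $\gamma$ is a simple closed curve on $\CP^1\cong S^2$, so by the Jordan curve theorem it bounds two topological disks $D_1, D_2$ with $\partial D_1=\partial D_2=\gamma$. Fix one of them, say $D=D_1$, oriented so that $\gamma=\partial D$ is traversed counterclockwise with $D$ on its left. Let $Z$ denote the total number of zeros of $V$ in $D$ and $P$ the total number of poles in $D$, each counted with multiplicity.

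Next, I would recall the local index computation for a meromorphic vector field $V(z)$: near a zero of order $k$ the map $z\mapsto V(z)/|V(z)|$ has local degree $+k$, and near a pole of order $m$ it has local degree $-m$ (one sees this directly from $V(z)\sim c(z-z_0)^k$ or $V(z)\sim c(z-p_0)^{-m}$, since the argument of $V$ winds $\pm k$ (resp.\ $-m$) times as $z$ traces a small loop). Hence, by the Poincaré--Hopf theorem applied on $D$ (equivalently, by summing the local contributions using the argument principle),
\[
\deg\!\bigl(V/|V|\colon\gamma\to S^1\bigr)\;=\;Z-P.
\]

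The final, and conceptually key, step is to evaluate the left-hand side using the fact that $\gamma$ is a trajectory: along $\gamma$, $V$ is everywhere tangent to $\gamma$ and nonvanishing, so $V/|V|$ coincides (up to the chosen orientation) with the unit tangent vector field of $\gamma$. The winding number of the unit tangent along a smooth simple closed curve in $\R^2$ is $+1$ when the curve is traversed counterclockwise (Hopf's Umlaufsatz); passing this through the chart on $D\subset\CP^1$ with the chosen orientation gives $\deg(V/|V|)=+1$. Combined with the previous display this yields $Z-P=1$, so $Z=n+1$ and $P=n$ for some $n\in\Z_{\geq 0}$, which is the claim.

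I do not anticipate serious obstacles; the only subtlety is to be careful about the orientation convention (and, in the smooth but not $C^1$ boundary case, to justify the Umlaufsatz computation by a local straightening of the flow near $\gamma$ via the rectifying coordinate, which makes $\gamma$ smooth in a well-chosen chart). One should also mention that $n\ge 0$ since $P\ge 0$ by definition, and that the identity $Z_1+Z_2=\deg P$, $P_1+P_2=\deg Q$ together with the relation $\deg P-\deg Q=2$ for both disks is consistent with $Z_i-P_i=1$ on each side.
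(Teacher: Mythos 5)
Your proof is correct and follows essentially the same route as the paper: the paper also computes the local indices of zeros and poles and then invokes the standard fact that the sum of the indices enclosed by a closed trajectory equals $+1$. You simply spell out that last fact via the argument principle and Hopf's Umlaufsatz, which the paper leaves as ``known''.
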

\begin{proof} Simple zeros of a rational vector field have index $+1$ and simple poles have index $-1$, since the trajectories are organized as for a saddle point (the line field is the same as that of the vector field $\dot z = C\bar z$). 
It is known that the sum of the indices of the \lq\lq singularities\rq\rq\ inside a closed trajectory is equal to $+1$. \end{proof}

\begin{corollary}
A periodic trajectory of a polynomial vector field is inside the basin of a center. 
\end{corollary}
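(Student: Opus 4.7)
My plan is to combine Lemma~\ref{lemma:configurations} with the separatrix structure in Proposition~\ref{configurations} to show that a periodic trajectory of a polynomial vector field encloses exactly one simple zero, and then to force that zero to be a center.

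Let $\gamma$ be such a periodic trajectory of $\dot z = P(z)$ of degree $d$. Since $P$ is polynomial, the only pole on $\CP^1$ is at $\infty$, so $\gamma\subset\C$ bounds a topological disk $D\subset\C$ containing no poles of $V=P$. Applying Lemma~\ref{lemma:configurations} and counting the $d$ zeros of $P$ against the pole at $\infty$ of order $d-2$, the only consistent option compatible with the formula ``$n+1$ zeros and $n$ poles on one side'' is that $D$ contains exactly one simple zero $z_0$.

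Next I would identify $z_0$ as a center. By the classification of simple singular points stated at the start of the section, $z_0$ is a focus, a radial node, or a center. If it were a focus or a node, Proposition~\ref{configurations}(2) would produce a separatrix of some pole of $V$ landing at $z_0$; this separatrix is a trajectory originating at a pole, but the only pole is $\infty \notin D$, so such a trajectory would have to enter $D$ by crossing $\gamma$. That is impossible since $\gamma$ is itself a trajectory and trajectories of $V$ do not intersect. Hence $z_0$ must be a center.

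Finally, by the proposition on maximal periodic domains stated above, $\gamma$ lies in a maximal open domain $U$ of periodic trajectories whose boundary is a union of homoclinic or heteroclinic separatrix connections at parabolic singular points. Since $D$ contains only the simple zero $z_0$ and no parabolic points, no such separatrix configuration can live inside $D$; the inner boundary of $U$ on the $z_0$-side of $\gamma$ therefore degenerates to the single equilibrium $z_0$, so $U$ is a topological disk containing the center $z_0$, i.e.\ its basin, with $\gamma\subset U$ as required. I expect the focus/node exclusion to be the main obstacle; the rest is a direct application of the cited results.
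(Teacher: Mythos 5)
Your proposal is correct and follows essentially the same route as the paper: both arguments hinge on Lemma~\ref{lemma:configurations} together with the fact that the only pole of a polynomial field is at $\infty$, forcing the bounded side of the periodic orbit to contain exactly one simple zero and no poles. The only difference is that you explicitly justify why that zero must be a center (via the separatrix-landing statement of Proposition~\ref{configurations} and the non-crossing of trajectories), a step the paper's proof leaves implicit; this is a welcome addition rather than a deviation.
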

\begin{proof}
If a periodic trajectory or a polynomial vector field of degree $d$ surrounds more than one singular point, then it must surround a pole. But the only pole is at infinity and has degree $d-2$. Hence the periodic trajectory must surround $d-1$ singular points. Since it divides $\CP^1$ in two components, it is in the basis of the $d$-th singular point which is  a center. 
\end{proof}

\begin{definition}
	The closure in $\CP^1$ of the union of all separatrices of all poles is called the \emph{separatrix graph}. The connected components of its complement are called \emph{zones}. 	
\end{definition}

The topological organization of the real phase portrait of the vector field is completely determined by the separatrix graph.
There are four kinds of zones that can occur in a rational vector field on $\CP^1$:
\begin{enumerate}
	\item \emph{$\alpha\omega$-zone:} all trajectories have the same $\alpha$-limit and the same $\omega$-limit  which are two different equilibria (simple or multiple)
	\item \emph{odd/even sepal zone:} all trajectories have the same $\alpha$-limit and $\omega$-limit  which is  a parabolic equilibrium,
	\item \emph{odd/even center zone} consisting of periodic counter-clockwise/clockwise periodic orbits of the same period 
	$\nu_j>0$ around a center equilibrium point $z_j$ (which is included in the zone),
	\item \emph{periodic annulus} consisting of periodic orbits of the same period $\nu>0$  and surrounding on each side (on $\CP^1$) at least one pole.
\end{enumerate}

\subsection{The translation model of a rational vector field}

In this section we will move to the rectifying coordinate $t$ defined by 
\begin{equation}t = t(z) = \int\frac{Q(z)}{P(z)}\;dz.\label{eq:t}\end{equation}
The Riemann surface of $t(z)$ is a translation surface with the coordinate $t$ unique up to a translation. The periodgon and the star domain that we define below lie on this surface. 
The quotient of this surface by the deck transformations of the projection $t(z)\to z$, equipped with the form $dt$,  will be called the \emph{translation model} for the vector field.

\begin{definition}
We define the \emph{translation model} for the vector field $\dot z = \frac{P(z)}{Q(z)}$ as a triple
$(\Cal S,\Sigma, dt)$ where
\begin{enumerate}
	\item $\Cal S$ is the smooth manifold $\CP^1\sminus\{z:P(z)=0\}$,
	\item $\Sigma\subset\Cal S$ is the finite set of poles $\{z:Q(z)=0\}$,
	\item $\Cal S\sminus\Sigma$ is a translation surface, with local translation atlas given by the restrictions of the map \eqref{eq:t} to simply connected domains,
	\item the neighborhood of each point of $\Sigma$ corresponding to a pole of order $m-1$ is a cone of angle $2m\pi$,
	\item $dt$ is a translation invariant abelian form on $\Cal S\sminus\Sigma$. 
\end{enumerate}
In another words, the translation model is obtained by cutting $\CP^1\sminus\{z:P(z)=0\}$ into simply connected pieces,
taking their image by $t(z)$ and gluing them back together to create an abstract translation surface on which the real dynamics of the vector field is represented by the horizontal flow of $\dot t=1$.
\end{definition}

The translation model is a very natural object that was previously considered by other authors; in particular it appears in \cite{Mucino-Valero, Mucino-Valero2, Mucino}. 

\begin{proposition}
	Two rational vector fields are globally conjugated (by a Moebius transformation) if and only if their translation models are isomorphic, i.e. if there exists a biholomorphism of the translation surfaces sending each conic singularity  to a conic singularity with the same  angle and preserving the form $dt$.
\end{proposition}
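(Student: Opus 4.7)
The forward direction is essentially bookkeeping. If $\phi:\CP^1\to\CP^1$ is a Moebius transformation conjugating $V_1=P_1/Q_1$ to $V_2=P_2/Q_2$, then $\phi$ sends zeros of $P_1$ to zeros of $P_2$ and zeros of $Q_1$ to zeros of $Q_2$, preserving multiplicities (since the local conjugacy type is preserved). In particular it induces a biholomorphism $\Cal S_1\to\Cal S_2$ sending $\Sigma_1$ to $\Sigma_2$, and the conic angles $2m\pi$ at poles of order $m-1$ are preserved. The rectifying coordinates satisfy $t_1=t_2\circ\phi$ (up to an additive constant on each simply connected piece of the atlas), so $\phi^*\,dt_2=dt_1$. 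This gives the required isomorphism.

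For the reverse direction, suppose $\Psi:\Cal S_1\to\Cal S_2$ is a biholomorphism sending $\Sigma_1\to\Sigma_2$ with matching conic angles and with $\Psi^*\,dt_2=dt_1$. The plan is to first extend $\Psi$ to a biholomorphism of $\CP^1$, then verify that this extension conjugates $V_1$ and $V_2$. For the extension, let $z_j$ be a zero of $P_1$ (hence $z_j\in\CP^1\sminus\Cal S_1$) and consider $\Psi$ on a punctured neighborhood $D^*$ of $z_j$. By the Casorati--Weierstrass alternative, $\Psi$ either has a removable singularity or a pole at $z_j$, or an essential singularity; the last case would contradict injectivity of $\Psi$. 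So $\Psi$ extends holomorphically to $\CP^1$ with $\Psi(z_j)\in\CP^1$. If the limit lay in $\Cal S_2$, injectivity on $\Cal S_1$ near $z_j$ would be violated by continuity, so $\Psi(z_j)$ must be a zero of $P_2$. Applying the same argument to $\Psi^{-1}$, the extension $\wh\Psi$ is a holomorphic bijection of $\CP^1$, hence a Moebius transformation.

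It remains to show that $\wh\Psi$ conjugates the vector fields. Away from the zeros and poles, the condition $\Psi^*\,dt_2=dt_1$ says that in the local rectifying charts $\Psi$ is a translation, i.e.\ $t_2\circ\Psi=t_1+\mathrm{const}$ locally. Since $V_i$ is characterized by $dt_i(V_i)=1$, this gives $\wh\Psi_*V_1=V_2$ on the open dense set where both are regular nonzero; by continuity and the identity principle for rational vector fields, the conjugacy extends to all of $\CP^1$. Matching multiplicities of zeros and poles is automatic from the behavior already established, and from the assumption that the conic angles at $\Sigma_1$ and $\Sigma_2$ agree.

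The main technical point to watch is the extension across zeros of $P_1$: one must rule out an essential singularity and ensure the image point is truly a zero of $P_2$ (not a regular point of $V_2$). The Casorati--Weierstrass injectivity argument handles this cleanly, though one could alternatively argue directly from the local translation structure of the ends (half-infinite cylinders at centers, spiraling ends at foci/nodes, fan-of-sepals at parabolic points) which is preserved by $\Psi$ and pins down the corresponding puncture in $\Cal S_2$ uniquely.
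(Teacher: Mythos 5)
Your proof is correct and follows essentially the same route as the paper's: identify the translation models with the punctured sphere, extend the isomorphism across the punctures by a removable-singularity argument (the paper invokes boundedness where you use Casorati--Weierstrass plus injectivity, the same idea), conclude it is a Moebius transformation, and use preservation of $dt$ to obtain the conjugacy. The only difference is that you also write out the trivial forward direction, which the paper omits.
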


\begin{proof}

There is a bijection $h_l$ between $\CP^1\sminus\{z:P_l(z)\}$ and the domain of the translation model of $\dot z = \frac{P_l(z)}{Q_l(z)}$, which is biholomorphic outside of the poles and sends the poles to the  conic singularities. Let $H$ be an isomorphism between the translation surfaces. Note that $H$ sends the conic singularities to conic singularities of the same type. Also it preserves the vector field $\dot t =1$. Then $h_2^{-1}\circ H\circ h_1$ is a global conjugacy of the two vector fields on $\CP^1\sminus\{z:P_l(z)=0\}$ that extends to the punctures (because it is bounded there), and that is analytic at the poles;
hence it is a Moebius transformation.
\end{proof}

\subsection{Periodic, parabolic and annular domains}

\begin{definition}
To each equilibrium point $z_j$ one associates its \emph{period} (also called \emph{dynamical residue}) as the ``travel time'' along a simple loop around the point:
$$\nu_j=2\pi i \;{\rm Res}_{z_j} \frac{Q(z)}{P(z)}.$$
By the residue theorem $\nu_1+\dots + \nu_d=0.$
\end{definition}

If $z_j$ is a simple equilibrium point, then its period is $\nu_j= 2\pi i\,\frac{Q(z_j)}{P'(z_j)}$, and
$z_j$ is a center of the rotated vector field 
\begin{equation}\dot z = e^{i\arg \nu_j}\frac{P(z)}{Q(z)}. \label{rotated_vf}\end{equation}

\begin{definition}~ 
	\begin{enumerate}[wide=0pt, leftmargin=\parindent]
		\item For a simple equilibrium point $z_j$, the \emph{periodic domain} of $z_j$  is the periodic basin of the center (also called center zone) at $z_j$ of \eqref{rotated_vf}. 
		The boundary of the periodic domain of $z_j$ consists of one or several homoclinic or heteroclinic connections of \eqref{rotated_vf}. Generically it is a single homoclinic loop through one pole, which we then call the \emph{homoclinic loop} of $z_j$. 
		\item  For a multiple equilibrium point $z_j$, the \emph{parabolic domain} of $z_j$ is the union of all the sepal zones of $z_j$ in \eqref{rotated_beta} for all $\beta\in\R$  (see Figure~\ref{parabolic_domains}).
		\item An \emph{annular domain} is a periodic annulus of \eqref{rotated_beta} for some $\beta\in\R$.
		\item An \emph{end} of a periodic/parabolic/annular domain refers to a ``corner'' of the domain at an adjacent pole. The domain may have several ends at the same pole. 
	\end{enumerate}
\end{definition}

\begin{figure}\begin{center}
\subfigure[]{\includegraphics[width=4cm]{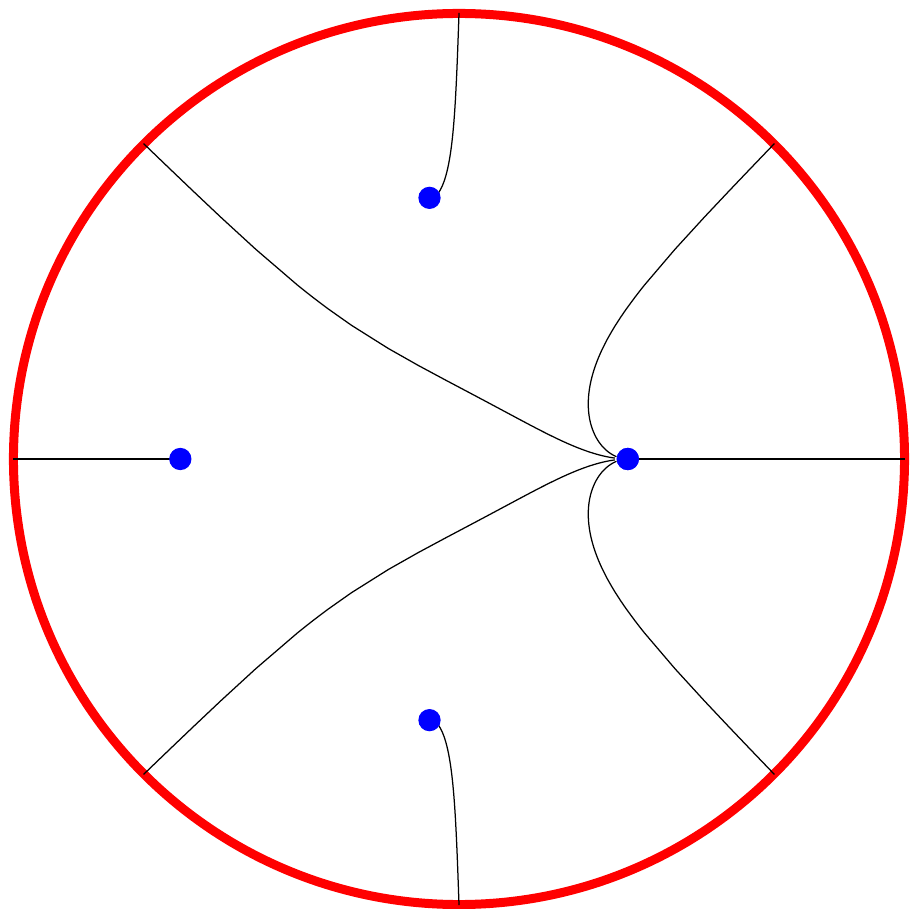}}\qquad \subfigure[]{\includegraphics[width=4cm]{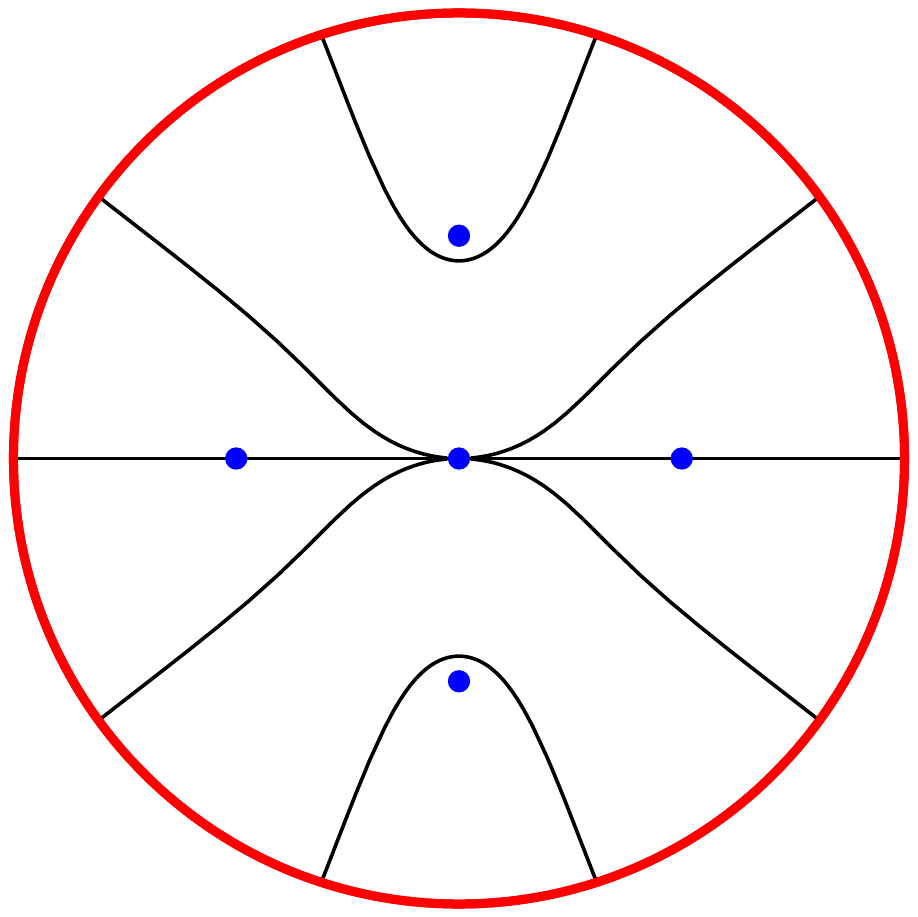}}\\ \subfigure[]{\includegraphics[width=4cm]{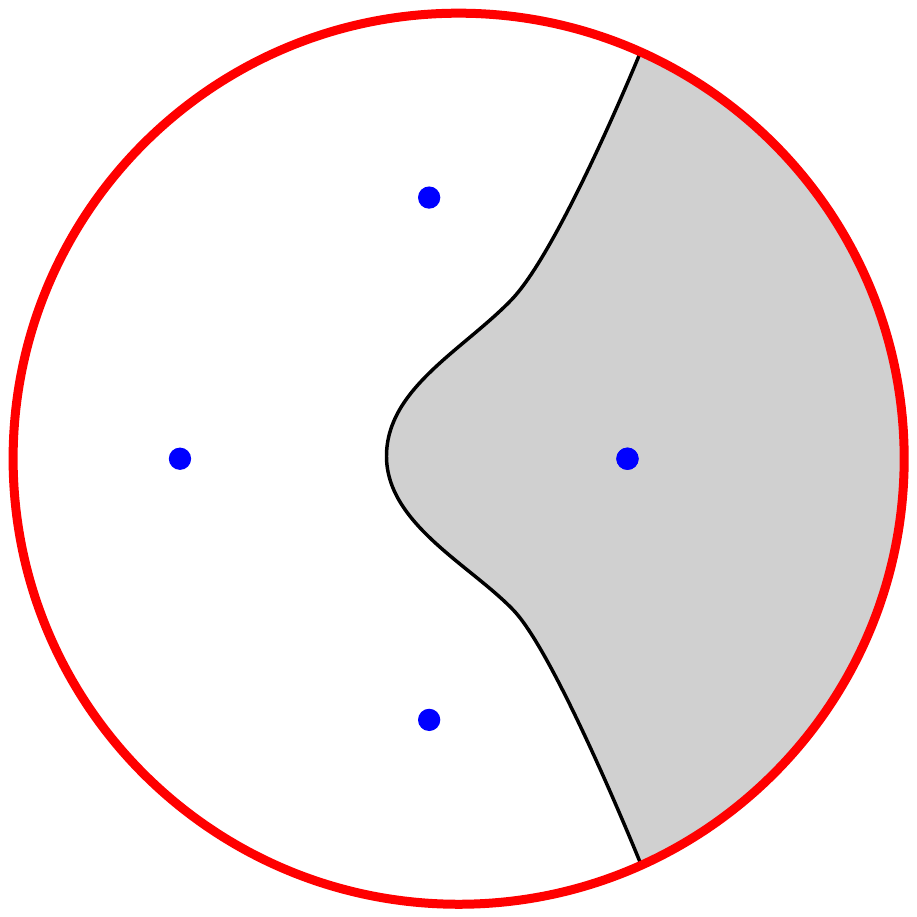}}\qquad \subfigure[]{\includegraphics[width=4cm]{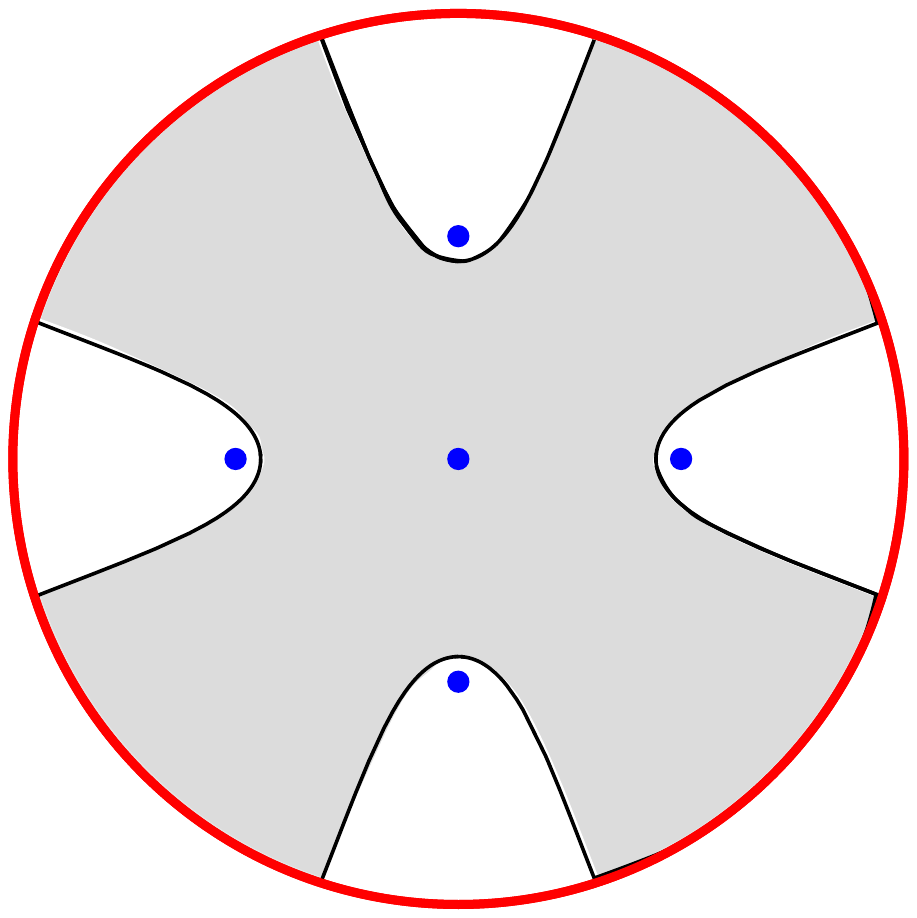}}
\caption{In (a) and (b) the  vector fields $\dot z = z^5-5z+4$ (see \cite{KR}) and  $\dot z= z^6-z^2$ (see \cite{R19}), and in (c) and (d) their respective parabolic domains at $z=1$ and $z=0$.}\label{parabolic_domains}\end{center}\end{figure}

\begin{definition} Given a rational vector field, $\dot z =\frac{P(z)}{Q(z)}$, we consider the associated family of rotated vector fields 
\begin{equation}\label{rotated_beta}
\dot z =e^{i\beta}\frac{P(z)}{Q(z)},\qquad \beta\in \R.
\end{equation}
Any homoclinic or heteroclinic connection appearing in the rotating family for some $\beta\in\R$ is called a
\emph{saddle connection} of the rational vector field. Note that it is oriented. \end{definition}

\begin{remark} We will consider simultaneously saddle connections corresponding to different angles of rotation $\beta$. \end{remark}

\begin{lemma}
	The parabolic domain of a multiple equilibrium $z_j$ covers a full neighborhood of $z_j$.
	Its boundary consists of a finite number of saddle connections.
\end{lemma}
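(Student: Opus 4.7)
The strategy for the first claim is to apply the local normal form $\dot z = z^{k+1}/(1+az^k)$ at $z_j$ (after translation) and analyze the rotated field $V_\beta = e^{i\beta}V$. Its attractive and repulsive directions at $z_j$ are the $2k$ rays $\Im(e^{i\beta}z^k)=0$, which partition a small punctured disk about $z_j$ into $2k$ open angular sectors. A standard Fatou--flower argument shows that each such sector is locally contained in a sepal zone of $V_\beta$, so already for a single fixed $\beta$ the sepal zones cover a punctured neighborhood of $z_j$. The parabolic domain, being their union over all $\beta$ together with $z_j$ itself, therefore contains a full neighborhood of $z_j$.

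For the second claim my plan is a compactness argument in the translation model. Let $z$ be a regular boundary point of the parabolic domain, and choose $z_n \to z$ with each $z_n$ lying on a sepal orbit of some $V_{\beta_n}$. Extracting a subsequence, $\beta_n \to \beta^*$; the trajectory $\gamma$ of $V_{\beta^*}$ through $z$ (a straight line in the translation coordinate) is then a limit of closed loops from $z_j$ back to $z_j$. Since $z$ is not itself in the parabolic domain, $\gamma$ cannot be a sepal orbit, so $\gamma$ must be a separatrix of $V_{\beta^*}$ emanating from $z_j$ and terminating at another singular feature of $V$. One then argues that this feature is a singular point rather than a pole, which forces $\gamma$ to be a saddle connection.

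Finiteness follows because $z_j$ carries exactly $2k$ separatrix directions, and as $\beta$ varies each sweeps out a one-parameter family of arcs in $\CP^1$. A saddle connection arises only when such an arc lands on one of the finitely many other singular points of $V$ (or loops back to $z_j$)---an isolated condition on $\beta$. Summing over all such events yields only finitely many saddle connections, which together form the boundary.

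The main obstacle is the step that excludes pole separatrices from the boundary. The argument I expect to use is that if $\gamma$ terminated at a pole $p$, then a small rotation of $\beta^*$ would deflect the straight line near $p$ off the cone point; by continuity of the ``connects to the end $z_j$'' condition, the deflected line should still close up at the end $z_j$ on both sides, placing $z$ into a nearby sepal zone and contradicting that $z$ is a boundary point. Making this rigorous requires a careful local analysis of straight-line dynamics near the cone singularities of the translation surface, and this is the subtlest part of the argument.
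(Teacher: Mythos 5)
Your argument for the first claim is essentially the paper's (rotate the sepal zones with $\beta$), modulo one overstatement: for a \emph{single} fixed $\beta$ the sepal zones do \emph{not} cover a punctured neighborhood of $z_j$ --- they are open and miss the $2k$ separatrices landing at $z_j$ along the attractive/repulsive directions. It is precisely the union over varying $\beta$ that sweeps up those missing rays, so your final conclusion stands but the intermediate claim should be dropped.

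The second half contains a genuine conceptual error. In this paper a \emph{saddle connection} is a homoclinic or heteroclinic connection between \emph{poles}: the poles are the points whose local phase portrait is saddle-like (they have finitely many separatrices, and each separatrix either lands at an equilibrium or merges with another pole separatrix). A trajectory emanating from the equilibrium $z_j$ and terminating at another zero of $P$ is not a saddle connection in this sense, so the step you flag as the ``main obstacle'' --- arguing that the boundary trajectory terminates at a singular point \emph{rather than} a pole --- is exactly backwards: if you succeeded, you would have shown the boundary is \emph{not} made of saddle connections. The correct dichotomy, which is the paper's argument, runs the other way. The boundary of each sepal zone of $V_\beta$ consists of separatrices of poles; those separatrices that land at the equilibrium $z_j$ get absorbed into a sepal zone of $V_{\tilde\beta}$ for $\tilde\beta$ slightly to one side of $\beta$, hence lie in the \emph{interior} of the parabolic domain and contribute nothing to its boundary. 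What cannot be absorbed are the pole-to-pole connections, because by the trapping property (Proposition~\ref{prop:periodicdomains}) no trajectory of any $V_{\tilde\beta}$ crosses them into or out of the domain; these are the saddle connections and they form the boundary. Your finiteness count is likewise aimed at the wrong object: bounding events by the $2k$ separatrix directions at $z_j$ does not control the number of pole-to-pole connections on the boundary (annular domains show that a vector field can carry infinitely many saddle connections overall). The relevant bound is that the parabolic domain has finitely many ends, since there are finitely many poles and each pole, having finitely many separatrices, can host only finitely many ends; the boundary is a chain of saddle connections joining consecutive ends, hence finite.
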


\begin{proof}
	Locally, the sepal zones of the equilibrium $z_j$ rotate with $\beta$, hence their union covers a full neighborhood of $z_j$.
	For each $\beta$ the sepal zones are bounded by separatrices. A separatrix landing at the equilibrium will be completely contained in a sepal zone for any $\tilde\beta$ either on the left side or on the right side of $\beta$, and sufficiently close to $\beta$. On the other hand, a homoclinic separatrix bordering a sepal zone of $z_j$ for some $\beta$ will be also bordering the parabolic domain $z_j$, as follows from Proposition~\ref{prop:periodicdomains} below.
	There is only a finite number of ends to the parabolic domain (there is only a finite number of poles, each can host only finitely many ends), hence the boundary consists of only finitely many saddle connections.
\end{proof}	

\smallskip

\begin{proposition}~ \label{prop:periodicdomains}
	\begin{enumerate}[wide=0pt, leftmargin=\parindent]
		\item The periodic and parabolic domains of all the equilibria act as trapping regions for the whole rotating family of vector fields 	\eqref{rotated_beta}: no trajectory of \eqref{rotated_beta} for any $\beta$ can both enter and leave a periodic or parabolic domain.
		In particular, no saddle connection can intersect the periodic or parabolic domain of an equilibrium. 
		\item The periodic, parabolic and annular domains are all disjoint. 
		\item If the degree is $>2$ and  if all singular points are simple and the boundary of each periodic domain consists of exactly one homoclinic loop, then these homoclinic loops are disjoint too.
	\end{enumerate}
\end{proposition}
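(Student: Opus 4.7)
The plan is to work throughout in the translation model, where a trajectory of $e^{i\beta}V$ is a straight line in the constant direction $e^{i\beta}$, and to prove the three parts in order, each using the previous.

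For Part (1), the simple case is transparent: the periodic domain $U_j$ of a simple equilibrium $z_j$ is, in the translation model, a semi-infinite flat cylinder of circumference $|\nu_j|$ with $z_j$ at the infinite end and a polygonal boundary whose sides are saddle connections of $e^{i\arg\nu_j}V$ (all of direction $\nu_j/|\nu_j|$). The height function $h(t)=\mathrm{Im}(t/\nu_j)$ is invariant under $t\mapsto t+\nu_j$, so it descends to a well-defined linear function on the cylinder, vanishing on $\partial U_j$ and tending to $+\infty$ at $z_j$. Along any trajectory of $e^{i\beta}V$, $h$ is affine in the time parameter, hence the set of times spent in $U_j$ is a single interval, which rules out entering and leaving. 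For a parabolic equilibrium of codimension $k$, I would pass to the logarithmic coordinate $u=\log(z-z_j)$ on the universal cover of a punctured neighborhood, in which the translation coordinate becomes $t=-e^{-ku}/k+(\nu_j/2\pi i)u+O(e^u)$ on a half-plane in $u$, and argue that the lift of $U_j$ is geodesically convex with respect to the flat metric $|dt|$; a straight line then meets it in a single interval and the same dichotomy follows. The supplementary claim that no saddle connection intersects the interior of $U_j$ is then immediate: a saddle connection has its two endpoints at equilibria or poles outside $U_j$ (or at worst on $\partial U_j$), so if it entered $U_j$ it would have to leave to reach the second endpoint, contradicting the dichotomy. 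The geodesic-convexity assertion in the parabolic case is the main obstacle; the cone angle $2\pi k>2\pi$ at $z_j$ and the polygonal boundary with sides in many different directions make it non-trivial.

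For Part (2), I would argue by contradiction from Part (1). Suppose $p\in U_j\cap U_k$ with $j\neq k$. Taking the trajectory through $p$ of the rotated field whose structure defines $U_k$ (periodic, parabolic, or annular), Part (1) applied to $U_j$ forces the trajectory to be trapped inside $U_j$. For the simple-$z_j$/periodic-$U_k$ case, a closed orbit in the flat cylinder $U_j$ must be in direction $\pm\nu_j/|\nu_j|$, forcing $\arg\nu_k\equiv\arg\nu_j\pmod{\pi}$; then $e^{i\arg\nu_k}V$ and $e^{i\arg\nu_j}V$ differ at most by sign, so $z_j$ and $z_k$ are distinct centers of the same rotated field whose center zones are disjoint components, contradicting $p\in U_j\cap U_k$. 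A trajectory accumulating on $z_k$ that is trapped in $\overline{U_j}$ must have $\omega$-limit equal to $\{z_j\}$, forcing $z_j=z_k$; and an annular orbit trapped in $U_j$ would fail the ``pole on each side'' condition defining an annular domain, since one side of the orbit contains only $z_j$. The cases with $U_j$ parabolic or annular follow by the same reasoning using Part (1) in those shapes.

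For Part (3), under the genericity hypotheses each $\partial U_j=\gamma_j$ is a single homoclinic loop, a trajectory of $e^{i\arg\nu_j}V$ with tangent direction $\nu_j/|\nu_j|$ in the translation model. Assume for contradiction that two such loops $\gamma_j$ and $\gamma_k$ meet. If the tangent directions differ, any meeting is a transverse intersection; then continuing $\gamma_k$ past the crossing enters $U_j$, but $\gamma_k$ is a closed curve and so must leave $U_j$, contradicting Part (1). If $\nu_k/|\nu_k|=\pm\nu_j/|\nu_j|$, then $\gamma_j$ and $\gamma_k$ are parallel straight lines in the translation model, and any meeting forces them to coincide as sets; then $\CP^1\setminus\gamma_j$ has $U_j$ and $U_k$ as its two components, each containing only its simple center, and the common loop passes through only one pole, so the $d\geq3$ zeros and $d-2\geq1$ poles demanded by Poincar\'e--Hopf and Lemma~\ref{lemma:configurations} cannot all be accommodated.
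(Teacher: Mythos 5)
Your route is genuinely different from the paper's: the paper simply invokes the argument of \cite{KR} together with the classical fact that trajectories of the rotated family \eqref{rotated_beta} for two different angles $\beta$ intersect at most once (plus a Jordan-curve/parity argument on $\CP^1$), whereas you work in the translation model and reduce everything to flat geometry. For the periodic domain of a simple equilibrium your height-function argument on the semi-infinite cylinder is correct and complete --- indeed more explicit than the paper --- and your Part (3) is sound, since under its hypotheses only simple equilibria occur and the transverse-crossing and coinciding-loop cases are both handled (note that ``disjoint'' must be read for the loops as open trajectories, since several homoclinic loops can share a pole as an endpoint).

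There are, however, two genuine gaps. First, in Part (1) the parabolic case is not proved: you assert that the lift of the parabolic domain is geodesically convex and yourself flag this as ``the main obstacle.'' But geodesic convexity of this domain (a union of $2k$ half-planes and half-strips around a cone of angle $2k\pi$ at infinity, with boundary saddle connections in many directions) is essentially a restatement of the trapping property you are trying to prove; nothing in your proposal supplies it, and it is exactly here that the dynamical input (monotone rotation of separatrices, at-most-one-intersection for distinct $\beta$'s) is needed rather than pure flat geometry. Second, in Part (2) the case of two \emph{annular} domains meeting is not covered by ``the same reasoning'': Part (1) explicitly does not apply to annular domains (a finite flat cylinder is not a trapping region --- a transversal geodesic enters one side and exits the other), so when $U_j$ and $U_k$ are both annular with $\beta_j\not\equiv\beta_k \pmod\pi$ you have no domain to trap anything in. The missing step is precisely the paper's key lemma: the two closed orbits through a common point would cross transversally exactly once, which is impossible for two closed curves on $\CP^1$. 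The remaining subcases of Part (2) (periodic--periodic, periodic--annular, and the parabolic cases modulo the first gap) are argued correctly.
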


\begin{proof}
	The proof is the same as that of \cite{KR} in the polynomial case and uses well-known facts about families of rotated vector fields (see for instance \cite{Duf} or \cite{P}), namely that trajectories of the family of rotated vector fields \eqref{rotated_beta} for different values of $\beta\in \R$ can have at most one intersection point. 
	Also, separatrices of a pole for different $\beta$'s can only intersect at the pole.
\end{proof}

The following proposition is immediate.

\begin{proposition}\label{prop:translationmodel}
Let us now consider the images of the periodic, parabolic and annular domains by the map \eqref{eq:t}.
\begin{enumerate}[wide=0pt, leftmargin=\parindent]
	\item The image of an annular domain is a cylinder of finite area in the translation model, the boundary of which consists of two  connected components  
	formed by saddle connections  with the same angle $\beta$.
	In particular, each of the two boundary components contains at least one pole (element of $\Sigma$).
	\item The image of a periodic domain minus the singular point is a semi-infinite cylinder (a ``hose'' of flat pants) in the translation model, the boundary of which  has a single component formed by saddle connections   with the same angle $\beta=\arg\nu_j$.  
	\item The image of the parabolic domain of an equilibrium of multiplicity $k+1$ minus the singular point itself is a union of $2k$ half-planes, 
	corresponding to the sepal zones for any fixed $\beta$, and of a finite number of half-strips, corresponding to inter-sepal regions 
	each attached to a single component of the boundary of the domain  (see Figure~\ref{fig:par_domains}). 
	In total, it covers a punctured cone of angle $2k\pi$ at infinity (in a projective coordinate). \end{enumerate}\end{proposition}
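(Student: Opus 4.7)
The plan is to prove the three parts separately by local analysis of the rectifying integral $t=\int Q/P\,dz$ near the relevant equilibria, combined with the descriptions of the boundaries already obtained in Proposition~\ref{prop:periodicdomains} and the preceding lemma.

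For (1), an annular domain $A$ is a topological cylinder in $\CP^1$ containing no zeros or poles of $V$, on which $dt=dz/V$ is therefore a holomorphic $1$-form with period $\nu$ along the generator of $\pi_1(A)\cong\Z$; the map $t$ realizes $A$ as a flat cylinder of circumference $|\nu|$, and its area is finite since $|Q/P|$ is bounded on the compact closure of $A$. Each closed orbit in $A$ is a trajectory of $e^{i\beta}V$ for $\beta=\arg\nu$, so every boundary saddle connection has direction $e^{i\beta}$; that each boundary component contains at least one pole is built into the definition of an annular domain. For (2), linearization near a simple center $z_j$ with $V'(z_j)=\alpha\in i\R\setminus\{0\}$ yields $t-t_0=\alpha^{-1}\log(z-z_j)+O(z-z_j)$ with monodromy $\nu_j=2\pi i/\alpha\in\R$, so $|t|\to\infty$ as $z\to z_j$, and the image of the periodic domain is a semi-infinite cylinder of circumference $|\nu_j|$ whose finite boundary is handled exactly as in (1), with $\beta=\arg\nu_j$.

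For (3), I would start from the local normal form at an equilibrium of multiplicity $k+1$, which integrates to
\[
t-t_0 \;\sim\; -\frac{1}{k(z-z_j)^k}+\frac{\nu_j}{2\pi i}\log(z-z_j);
\]
the leading term shows that a small punctured neighborhood of $z_j$ maps to the complement of a large disk inside a cone of total angle $2k\pi$ at infinity. Since the preceding lemma says the parabolic domain covers a full punctured neighborhood of $z_j$ and is bounded by finitely many saddle connections, its image is precisely a punctured cone of angle $2k\pi$. For fixed $\beta$, each of the $2k$ sepal zones of $e^{i\beta}V$ at $z_j$ is simply connected with both limits equal to $z_j$, so $t$ is univalent on it and the image is a half-plane with boundary parallel to direction $e^{i\beta}$. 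The complement of these $2k$ half-planes inside the punctured cone then decomposes into the inter-sepal regions, each lying between two consecutive sepal half-planes and abutting the outer boundary of the parabolic domain.

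The main obstacle, in my view, is the final assertion of (3): that each such inter-sepal region is attached to a \emph{single} boundary component of the parabolic domain, and is therefore a half-strip rather than a more complicated piece. I would argue this by tracking the continuous rotation of the sepal decomposition as the angle parameter $\beta'$ of $e^{i\beta'}V$ varies: an inter-sepal region for $\beta$ is itself a sepal zone for nearby $\beta'$, and as $\beta'$ is moved this sepal persists until one of its two bounding separatrices of $z_j$ is absorbed into a saddle connection on the boundary of the original parabolic domain, at which point the corresponding inter-sepal region pinches off. Cases (1) and (2) are otherwise routine once the correct local form of $t$ is in hand.
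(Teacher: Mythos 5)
First, a point of comparison: the paper offers no argument at all for this statement (it is introduced with ``The following proposition is immediate''), so your write-up supplies details the authors omit. Parts (1) and (2) of your proposal are correct, as is the skeleton of (3): the local form $t-t_0\sim -\tfrac{1}{k(z-z_j)^k}+\tfrac{\nu_j}{2\pi i}\log(z-z_j)$, the cone of angle $2k\pi$ at infinity, and the identification of each sepal zone with a maximal half-plane in direction $e^{-i\beta}$. Two small quibbles: in (1), that each boundary component of the cylinder contains a pole is not literally ``built into the definition'' (the definition only says each complementary component of the annulus on $\CP^1$ contains a pole); it follows instead because the boundary is a union of saddle connections, which are arcs joining poles. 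In (2), the single-component claim for the boundary deserves one line: the periodic basin is an open topological disk in $\CP^1$, hence has connected boundary.

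The genuine gap is exactly where you flag it, and your proposed repair fails as stated: an inter-sepal region is \emph{never} a sepal zone for a nearby $\beta'$. In the translation model a sepal zone is a maximal union of complete straight lines in a fixed direction joining $z_j$ to itself, i.e.\ a maximal half-plane, while an inter-sepal region is (to be shown) a half-strip, which is not such a union for any direction; the subsequent ``pinching off'' picture therefore does not parse. The rotation idea can nonetheless be made to work. Let $W$ be a component of the parabolic domain minus the $2k$ sepal half-planes for the fixed $\beta$; it lies in the strip $S$ between two adjacent half-planes. First, $W$ contains no complete line in direction $e^{-i\beta}$, since such a line would be a trajectory of \eqref{rotated_beta} bi-asymptotic to $z_j$ and hence already inside a sepal zone; so every maximal $\beta$-trajectory in $W$ reaches the boundary of the parabolic domain at one end at least. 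Second, every point of $W$ lies in some sepal zone $H(\beta')$ with $\beta'\neq\beta$; the boundary line of $H(\beta')$ is transversal to $S$, so $H(\beta')\cap S$ is connected, contains one of the two ends of $S$, and is contained in $U\cap S$, hence in the single component $W$. Thus $W$ is a union of connected sets all containing one fixed end of $S$: it is a half-strip whose finite end is a connected chain of saddle connections, and is therefore attached to a single component of the boundary (and there are at most two such components per strip, whence finiteness). With this substitution your argument is complete; without it, the key assertion of (3) remains unproved.
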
	

\begin{figure}\begin{center}
\subfigure[]{\includegraphics[width=4cm]{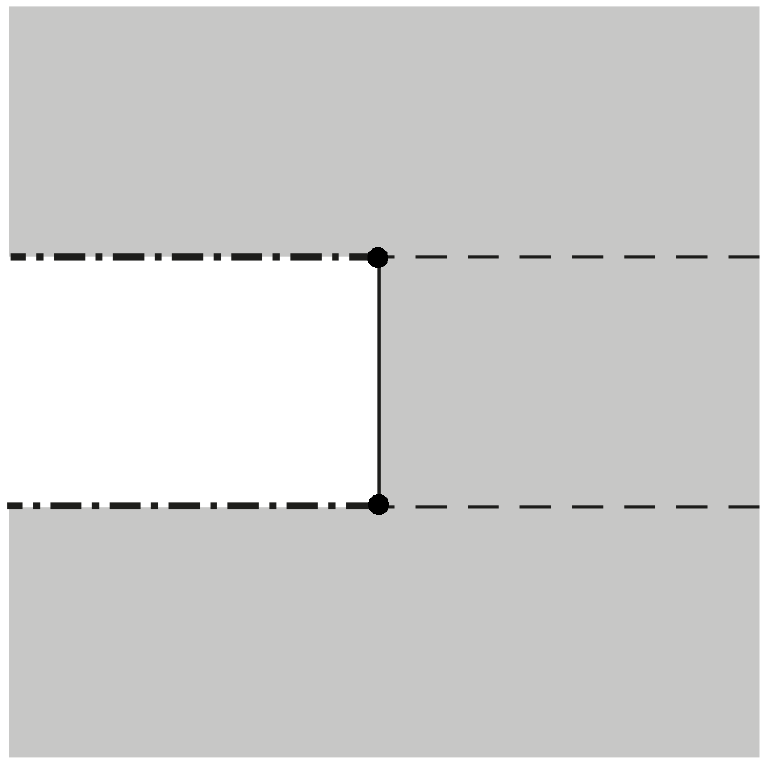}}\qquad\qquad\qquad\subfigure[]{\includegraphics[width=4cm]{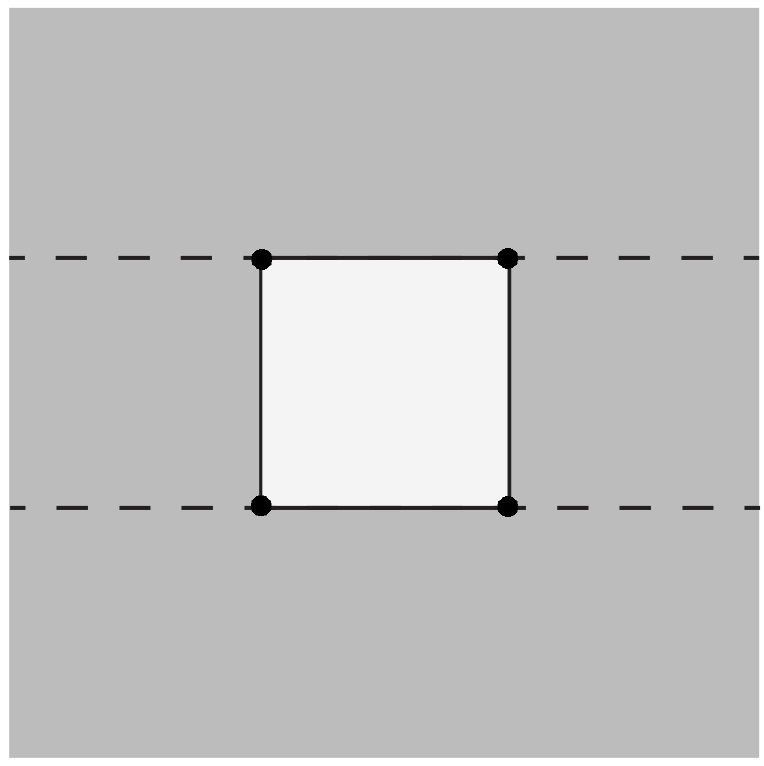}}\\
\subfigure[]{\includegraphics[width=4cm]{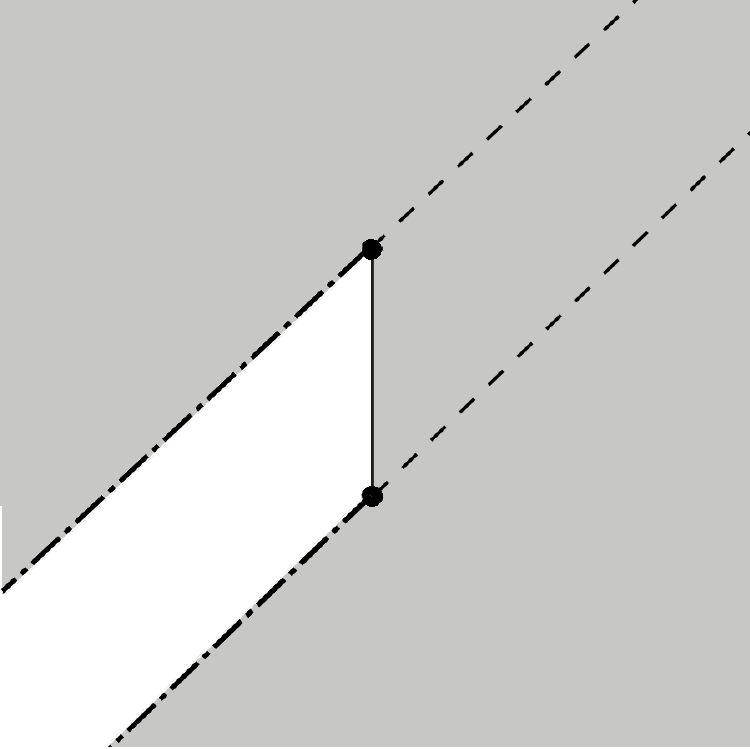}}\qquad\qquad\qquad\subfigure[]{\includegraphics[width=4cm]{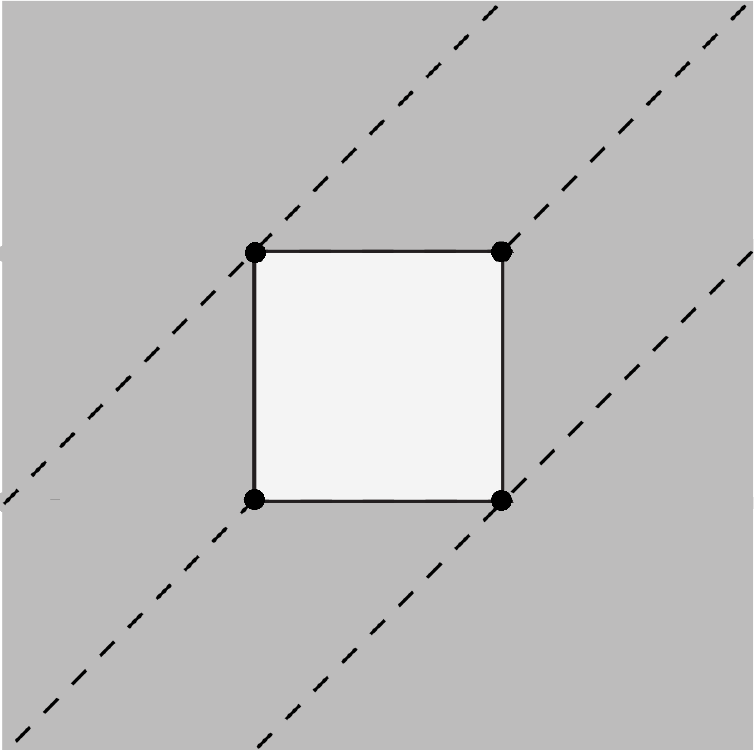}}
\caption{The images in $t$-space of the respective parabolic domains of the  vector fields  $\dot z = z^5-5z+4$ in (a), (c), and  $\dot z= z^6-z^2$ in (b), (d) of Figure~\ref{parabolic_domains} in the translation model. In (a), resp. (c), we glue together the left horizontal, resp. slanted, cuts. The thin dashed lines show the decomposition into sepal and inter-sepal regions for the angle $\beta=0$ in (a), (b), and $\beta=\frac{\pi}{4}$ in (c), (d).}\label{fig:par_domains}
\end{center}\end{figure}

\begin{proposition}[Muciño-Raymundo, Valero \cite{Mucino-Valero}]~
	\begin{enumerate}[wide=0pt, leftmargin=\parindent]
		\item 	 Each annular domain contains infinitely many saddle connections. More precisely, for each pair of ends on opposite boundaries of the domain, there are $\Z$-many different saddle connections (see Example~\ref{example1} and Figure~\ref{fig:example1}): if the $0$-th such connection is selected as (one of) the shortest, then the 
		$n$-th such connection is the one for which the closed curve obtained by composing it with the reverse of the $0$-th one
		has a turning number $n$.
		As $n\to\pm\infty$ the angle $\beta_n$ in \eqref{rotated_beta} of the $n$-the saddle connection tends to the angle $\beta$ of the annular domain.
		\item There is only a finite number of saddle connections that are not completely contained in one of the annular domains.
	\end{enumerate}
\end{proposition}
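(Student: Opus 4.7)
My plan is to treat the two parts separately, both arguments taking place in the translation model.

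For part (1), by Proposition~\ref{prop:translationmodel}(1) the annular domain $A$ of angle $\beta$ is represented in the translation model by a flat cylinder $C$ with complex period $T$ satisfying $\arg T = \beta$ and width $w>0$, whose two boundary circles carry cone points at the ends of $A$. Fix an end $p_1$ on one boundary component and an end $p_2$ on the opposite one, and pass to the universal cover $\tilde C\cong \R\times (0,w)$; under the deck group $\Z\cdot T$, the lifts of $p_2$ form the $\Z$-orbit $\{p_2+nT\}_{n\in\Z}$. For each $n$, join a fixed lift of $p_1$ to $p_2+nT$ by a straight segment $\ell_n$; this lies entirely in $\tilde C$ by convexity and descends to a straight geodesic in $C$ from $p_1$ to $p_2$ with direction $\beta_n=\arg(p_2-p_1+nT)$. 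By construction the loop $\ell_n\cdot\ell_0^{-1}$ has turning number $n$ around $C$. For $|n|$ large, the slope of $\ell_n$ is close enough to $\arg T$ that the segment threads between all other cone points on $\partial C$ and is therefore a genuine saddle connection of $e^{i\beta_n}V$. Since $nT$ dominates $p_2-p_1$ for large $|n|$, we conclude $\beta_n\to \arg T=\beta$.

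For part (2), the plan is to define the compact ``core''
\[
K \;=\; \CP^1\,\setminus\, \bigcup\operatorname{int}\bigl(\text{annular, periodic, or parabolic domains}\bigr),
\]
and to bound the number of saddle connections that touch $K$ in an essential way. By the trapping region property (Proposition~\ref{prop:periodicdomains}(1)), no saddle connection can enter and leave a periodic or parabolic domain, so every saddle connection not entirely inside an annular domain, after being cut at its crossings of annular-cylinder boundaries, decomposes into maximal arcs each contained either in $K$ or entirely inside one annular cylinder. By Proposition~\ref{prop:translationmodel}, the removed domains cover precisely the infinite-area pieces of the translation surface (the semi-infinite cylindrical ends at simple equilibria and the cone-at-infinity regions at multiple equilibria) together with all the finite cylinders (the annular domains), so the image of $K$ in the translation model is a compact flat surface with polygonal boundary made of a finite union of saddle connections.

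In this compact surface there is a uniform length bound on straight segments between cone points, so only finitely many saddle connections occur in the interior, and the boundary contributes only the finite list of saddle connections bounding the removed domains.

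The main obstacle I expect is controlling saddle connections that pass through an annular cylinder with large winding number: in principle each additional winding could yield a new long straight segment with endpoints outside the cylinder. The delicate step is to argue—using the planar topology of $\CP^1$ and the structure of the separatrix chains on $\partial A$—that for all but finitely many windings the segment must actually hit a cone point on $\partial A$, breaking it into shorter saddle connections that fall under the previously counted cases; equivalently, a careful homotopic bookkeeping shows that the bi-infinite through-$A$ families are already parametrised by the $\Z$-family of part~(1).
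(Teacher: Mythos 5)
The paper does not prove this proposition: it is stated as a citation of Muciño-Raymundo and Valero \cite{Mucino-Valero} with no proof supplied, so your argument can only be judged on its own merits. Your part (1) is essentially correct, and in fact slightly stronger than you claim: since all boundary saddle connections of the cylinder have the single direction $\beta$, each boundary component develops onto a straight line, so the interior of \emph{every} segment $\ell_n$ lies strictly inside the open strip and meets no cone point --- the caveat ``for $|n|$ large the segment threads between the other cone points'' is unnecessary, as every $\ell_n$ is already a genuine saddle connection. (A minor orientation point: for $n\to-\infty$ your formula gives $\beta_n\to\arg T+\pi$ rather than $\arg T$.)

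Part (2) has a genuine gap, which you partly acknowledge. The pivotal assertion that ``in this compact surface there is a uniform length bound on straight segments between cone points'' is false for compact translation surfaces in general (a flat torus with one marked point carries saddle connections of unbounded length in every rational direction), so compactness of the core $K$ proves nothing by itself; the existence of such a bound \emph{is} the content of the statement and must come from the dynamics. Likewise, your proposed resolution of the traversing-segment problem cannot work as stated: a saddle connection that crosses an annular cylinder $A$ with large winding number has both endpoints off $\partial A$, so it is not ``already parametrised by the $\Z$-family of part (1)'', whose members lie entirely in $\overline{A}$ with endpoints on $\partial A$. The missing ingredient is the rotated-vector-field fact already invoked in the proof of Proposition~\ref{prop:periodicdomains}: trajectories of $e^{i\beta_1}V$ and $e^{i\beta_2}V$ meet in at most one point. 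Two segments traversing a flat cylinder with winding numbers $k$ and $k'$ meet roughly $|k-k'|$ times inside it, so this fact forces the winding numbers of all traversing saddle connections to take only boundedly many values, which is what actually bounds the lengths of the traversing pieces; a similar pairwise-intersection (or area-sweeping) argument is needed for the pieces lying in $K$, since a long embedded straight segment in a finite-area region must run nearly parallel to itself and hence sit inside a maximal flat cylinder, which one must then identify with an annular or periodic domain. Without these steps the finiteness claim remains unproved.
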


\begin{example}\label{example1} We consider the system 
\begin{equation}\dot z = i\frac{z^4+1}{1-z^2}.\label{vf_example}\end{equation} It has four centers at $z_j= e^{i\frac{\pi(1+2j)}4}$, $j=0, \dots, 3$, with periods $\pm\frac{\pi}{ \sqrt{2}}$,  and two poles at $z=\pm1$. Moreover, the  imaginary axis is invariant, and hence a periodic orbit on $\CP^1$. It belongs to a family of periodic orbits. 
The phase portrait appears in Figure~\ref{fig:example1}. There is a bi-infinite sequence of $\beta_n$ such that the corresponding rotated vector fields \eqref{rotated_beta}  has a saddle connection.\end{example}
\begin{figure} \begin{center}
\includegraphics[height= 4.5cm]{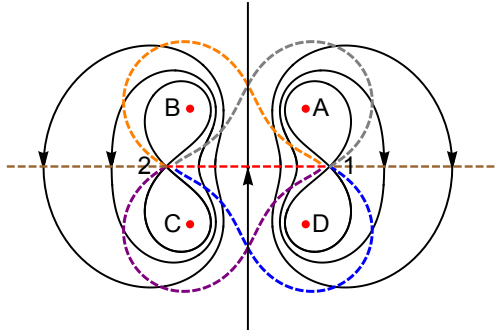}
\caption{The phase portrait of system \eqref{vf_example}. There is an annulus of periodic solutions on the sphere $\CP^1$, and a bi-infinite sequence of $\beta_n$ such that the corresponding rotated vector fields \eqref{rotated_beta}  has a saddle connection.}\label{fig:example1}\end{center} \end{figure}

\subsection{Chains of saddle connections}

The following proposition is a form of Poincar\'e-Hopf lemma for regions bounded by chains of saddle connections. 

\begin{proposition}\label{prop:index2} 
	 Let $\Gamma\subset\CP^1$ be a (union of) positively oriented closed curve(s) consisting  each of a finite number of a saddle connections forming  the boundary of an open set $\interior(\Gamma)\subset\CP^1$. 	Then
	\begin{equation}\label{formula}
	\#_{sc} \Gamma-\tfrac{1}{\pi}\sum_{\Gamma\cap\Sigma}\sphericalangle=2\chi(\interior(\Gamma))-2\sum_{z\in\interior(\Gamma)}\ind_zX,
	\end{equation} 
	where
	\begin{itemize}
		\item $\#_{sc} \Gamma$ is total the number of the oriented saddle connections appearing in all components of $\Gamma$ (the same  curve  may appear twice as two saddle connections with opposite orientations),
				\item $\sum_{\Gamma\cap\Sigma}\sphericalangle$ is the sum of the angles of $\interior(\Gamma)$ at its ends (at the points of $\Gamma\cap\Sigma$) measured in the translation model (i.e. each angle is the angle between the saddle connections at the pole  on $\CP^1$multiplied by $m+1$, where $m$ is the  multiplicity of the pole),
		\item $\chi$ is the Euler characteristic,
		\item $\ind_zX$ is the Poincar\'e-Hopf index of the vector field $X=\frac{P(z)}{Q(z)}\frac{\partial}{\partial z}$ at a point $z$, that is $m$, if $z$ is a singularity of multiplicity $m$, and $-m$, if $z$ is a pole of multiplicity $m$.
	\end{itemize}
\end{proposition}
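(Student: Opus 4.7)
The plan is to apply Gauss--Bonnet on $\interior(\Gamma)$, viewed via the translation model as a compact flat surface with polygonal boundary and cone singularities at the interior poles, after excising small disks around each zero of $X$.

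First I set $U=\interior(\Gamma)$, let $\{z_i\}$ denote the zeros of $X$ inside $U$ with multiplicities $m_i$, and let $\{p_\ell\}$ denote the poles inside $U$ with orders $k_\ell$ (so cone angles $2\pi(k_\ell+1)$ in the translation model). Removing small $z$-disks $B_\epsilon(z_i)$, I work on the compact flat surface $\overline U_\epsilon=\overline U\sminus\bigcup_i B_\epsilon(z_i)$, whose boundary is $\Gamma\cup\bigcup_i C_i$ with $C_i=\partial B_\epsilon(z_i)$. The flat structure is singular only at the interior poles, and the $n=\#_{sc}\Gamma$ corners of $\partial\overline U_\epsilon$ all lie on $\Gamma$ at the points of $\Gamma\cap\Sigma$.

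Since saddle connections are straight segments in the translation model, $\kappa_g$ vanishes on the $\Gamma$-part of the boundary, and Gauss--Bonnet on $\overline U_\epsilon$ reduces to
\[
\sum_i\int_{C_i}\kappa_g\,ds \;+\; \sum_{\mathrm{corners}}(\pi-\sphericalangle_j) \;+\; \sum_\ell\bigl(2\pi-2\pi(k_\ell+1)\bigr) \;=\; 2\pi\chi(\overline U_\epsilon).
\]
Here the corner sum is $n\pi-\sum_{\Gamma\cap\Sigma}\sphericalangle$, the cone-defect sum equals $-2\pi\sum_\ell k_\ell=2\pi\sum_\ell\ind_{p_\ell}X$, and $\chi(\overline U_\epsilon)=\chi(\interior(\Gamma))-\#\{z_i\}$, because each removed open disk drops the Euler characteristic by one.

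To handle $\int_{C_i}\kappa_g\,ds$ I note that $X\sim\lambda_i(z-z_i)^{m_i}\fpart{z}$ near $z_i$, so $dt=\lambda_i^{-1}(z-z_i)^{-m_i}\,dz$; parametrizing $C_i$ by $z=z_i+\epsilon e^{i\theta}$ gives $dt/d\theta$ of argument $\tfrac{\pi}{2}-\arg\lambda_i+(1-m_i)\theta$, so the tangent to $C_i$ in the translation model turns by $-2\pi(m_i-1)$ as $\theta$ runs from $0$ to $2\pi$. With the boundary orientation that keeps $\overline U_\epsilon$ on the left (i.e.\ clockwise in $z$ around each $z_i$) the sign flips, giving $\int_{C_i}\kappa_g\,ds=2\pi(m_i-1)=2\pi(\ind_{z_i}X-1)$.

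Substituting everything into Gauss--Bonnet and using $\sum_i(m_i-1)=\sum_i\ind_{z_i}X-\#\{z_i\}$, the $\pm 2\pi\#\{z_i\}$ terms cancel and I am left with
\[
2\pi\sum_{z\in\interior(\Gamma)}\ind_z X \;+\; n\pi \;-\; \sum_{\Gamma\cap\Sigma}\sphericalangle \;=\; 2\pi\chi(\interior(\Gamma)),
\]
which upon dividing by $\pi$ and rearranging is exactly \eqref{formula}. The most delicate step is the geodesic-curvature computation on the circles $C_i$: the turning of $C_i$ in the $t$-plane depends on $m_i$ (the rectifying map winds $m_i-1$ extra times around each small circle), and it is the interplay between this winding and the reversed orientation inherited from being the boundary of the excised disk that feeds each zero's multiplicity $m_i=\ind_{z_i}X$ into the right-hand side.
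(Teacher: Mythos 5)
Your Gauss--Bonnet argument is correct, and it takes a genuinely different route from the paper. The paper proves \eqref{formula} by a combinatorial reduction: it checks that both sides change equally when one adds regular points to $\Sigma$ as poles of order $0$ or cuts $\interior(\Gamma)$ along further saddle connections, and thereby reduces to base cases (nonsingular triangles, periodic domains, and parabolic domains), where the formula is verified from the explicit description of those domains in the translation model (Proposition~\ref{prop:translationmodel}). You instead apply the cone-point Gauss--Bonnet theorem directly on the completed flat surface $\overline U_\epsilon$: the saddle connections contribute nothing ($\kappa_g=0$ on geodesics), the corners give $n\pi-\sum\sphericalangle$, interior poles give their cone defects $2\pi\ind_{p_\ell}X$, and each excised circle around a zero contributes $2\pi(m_i-1)$ of turning, with the $-\#\{z_i\}$ correction to $\chi$ cancelling exactly. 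Your turning-number computation at the zeros is right (including the orientation reversal on the excised boundary), and it is legitimate to measure the total turning against the globally defined horizontal direction of $dt$ on the annulus around each zero, which is what identifies it with $\int_{C_i}\kappa_g\,ds$. The trade-off: your proof is shorter and more conceptual, but leans on Gauss--Bonnet with cone singularities and corners of angle possibly exceeding $2\pi$, and implicitly on treating $\overline U$ as the bordered completion of $\interior(\Gamma)$ (so that a pole appearing as several ends counts as several corners, and $\chi(\overline U)=\chi(\interior(\Gamma))$); the paper's induction is more elementary and simultaneously documents how the formula interacts with the periodic/parabolic domain decomposition used throughout the rest of the paper.
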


\begin{proof}
We shall show that the formula is stable by the operations 
\begin{enumerate}[wide=0pt, leftmargin=\parindent]
	\item adding new points to $\Sigma$ as poles of multiplicity 0;
	\item cutting the set $\interior(\Gamma)$ by additional saddle connections (which then appears twice in the new $\Gamma$ with two opposite orientations).
\end{enumerate}
By this procedure, the domain can be cut into a union of nonsingular triangles and of periodic and parabolic domains. 
Indeed, with every singularity in $\interior(\Gamma)$, then $\interior(\Gamma)$ contains also its periodic/parabolic domain relative to $\Sigma$, since no saddle connection can cut this domain by Proposition~\ref{prop:periodicdomains}. Therefore it will be enough to show that the formula \eqref{formula} is valid for these domains. We do this in (3).

\begin{enumerate}[wide=0pt, leftmargin=\parindent]
\item Adding a new point to $\Sigma$:
\begin{enumerate}
	\item a point of $\interior(\Gamma)$: it increases the Poincar\'e-Hopf index by 0;
	\item a point of $\Gamma$: it divides a saddle connection into two pieces and adds an interior angle $\pi$. 
\end{enumerate}
\item Cutting $\interior(\Gamma)$ along a new saddle connection increases the number of saddle connections by 2 while
\begin{enumerate}
	\item if both endpoints are on $\Gamma$: either it increases the number of connected components of $\interior(\Gamma)$ by 1 or it decreases the number of holes in $\interior(\Gamma)$, in both cases increasing the Euler characteristic by 1;
	\item if one endpoint is on $\Gamma$ and the other in $\interior(\Gamma)$: an interior pole of order $m-1$ stops being counted by the double of its Poincar\'e-Hopf index $m-1$ and becomes counted by an angle $2m\pi$;
	\item if both endpoints are in $\interior(\Gamma)$: this creates a new hole, thus decreasing the Euler characteristic by 1, while also changing how the endpoints are counted.
\end{enumerate}
\item The formula \eqref{formula} is clear for simply connected domains with no singularities, as well as for periodic domains.
Let us show it for parabolic domains. By Proposition~\ref{prop:translationmodel} for a generic angle $\beta$ the parabolic domain decomposes into a union of $2m$ single-ended sepal zones (corresponding to half-planes with a single point of $\Sigma$ on the boundary line), each contributing by an angle $\pi$, and of a number of inter-sepal regions (corresponding to half-strips with two corner points of $\Sigma$), one for each  
saddle connection on the boundary of the domain, where the total angle $\pi$ of the half-strip cancels the contribution of the saddle connection.\end{enumerate}\end{proof}

\begin{remark}
One can always add any regular point to the set of poles $\Sigma$ as a pole of order zero, thus enlarging the set of saddle connections. In particular, Lemma~\ref{lemma:configurations} becomes a special case of Proposition~\ref{prop:index2} if one adds to $\Sigma$ a point on the periodic trajectory.
\end{remark}

\begin{proposition}\label{prop:fundamentalgroupoid}
	The fundamental groupoid $$\Pi_1(\Cal S,\Sigma)=\Pi_1(\CP^1\sminus\{\text{singularities}\},\ \{\text{poles}\})$$ is generated by the set of saddle connections. Different saddle connections are not end-point-homotopic.
\end{proposition}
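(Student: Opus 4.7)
I would treat the two assertions in reverse order. For the non-homotopy claim, suppose two distinct saddle connections $s_1, s_2$ from $p_1 \in \Sigma$ to $p_2 \in \Sigma$ are end-point-homotopic in $\Cal S$. Then $\Gamma := s_1 \cdot \overline{s_2}$ is a null-homotopic oriented loop that bounds a topological disk $D \subset \CP^1$ containing no zeros of $P$. Orienting so that $D$ lies on the left of $\Gamma$ and applying Proposition~\ref{prop:index2}, one has $\#_{sc}\Gamma = 2$, $\chi(\interior\Gamma) = 1$, and only poles contribute to $\sum_{z\in\interior\Gamma}\ind_zX$, which is therefore $-\sum_{\text{poles }z\in D}m_z$. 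Formula \eqref{formula} becomes
\begin{equation*}
2 - \tfrac{1}{\pi}\bigl(\sphericalangle_{p_1} + \sphericalangle_{p_2}\bigr) \;=\; 2 + 2\sum_{\text{poles }z\in D}m_z,
\end{equation*}
whose right-hand side is $\geq 2$ while the left-hand side is $\leq 2$; hence both interior angles vanish and $D$ contains no poles in its interior. In the flat structure of the translation model, vanishing interior angle at a cone point forces $s_1$ and $s_2$ to leave $p_1$ in the same direction, and two straight geodesics starting at the same point in the same direction across a flat region free of cone points must coincide. As both terminate at $p_2$, we conclude $s_1 = s_2$, a contradiction.

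For the generation claim, given a path $\gamma$ in $\Cal S$ from $p_1$ to $p_2$, I would argue in the translation model. By Proposition~\ref{prop:translationmodel}, each periodic, parabolic, or annular domain corresponds to an end region --- a semi-infinite cylinder, a union of half-planes and half-strips, or a finite cylinder --- attached to the rest of the translation surface along a boundary composed of saddle connections; intersected with $\Cal S$ it deformation retracts onto that boundary. I would first homotope $\gamma$ out of the interiors of all such end regions, replacing every excursion by a boundary arc, which is itself a concatenation of saddle connections. What remains lies in the compact core, a flat surface whose cone points form a subset of $\Sigma$.

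Inside this core, I would straighten $\gamma$ rel endpoints into a piecewise-geodesic path: a local deformation in any flat chart replaces a smooth arc by the straight segment between its endpoints, and iteratively any interior bend that is not at a cone point can be further straightened. The limiting piecewise-geodesic therefore has bends only at conic singularities, and each linear segment between successive bends is a saddle connection. The main technical obstacle will be making this straightening rigorous: one must ensure that the iterative process terminates, that the homotopy does not escape into the remaining punctures at the zeros of $P$, and that only finitely many segments arise. I would handle this by confining $\gamma$, after the first step, to a sufficiently large compact sub-surface of the core, and by invoking the standard fact that on a compact flat surface with cone points every homotopy class of paths with endpoints at cone points admits a piecewise-geodesic representative bending only at cone points --- its uniqueness, incidentally, already following from the non-homotopy part established above.
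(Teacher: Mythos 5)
Your treatment of the second assertion coincides with the paper's: both apply Proposition~\ref{prop:index2} to the bigon bounded by two end-point-homotopic saddle connections and conclude from the angle count that the two connections coincide (the paper states this in one line; your computation makes the inequality explicit). For the generation claim, however, you take a genuinely different route. The paper does not straighten an arbitrary path: it observes that the groupoid is generated by a connecting system of paths between the poles together with one loop around each singularity based at a pole, realizes the latter as the boundary chains of the periodic/parabolic domains, and obtains the former by a connectivity argument --- the region swept by the separatrices of a pole under the rotating family, if it is not all of $\Cal S$, has a boundary containing another pole joined to the first by a saddle connection. Your approach --- push the path out of the end regions, then take a piecewise-geodesic representative in the flat metric --- is the standard translation-surface argument; it is more geometric and, once the straightening is justified (all cone points of the translation model are poles with angle $2m\pi\geq 4\pi$, so the surface is locally nonpositively curved, and the punctures at the zeros lie at infinite distance, so geodesic representatives exist, consist of straight segments joining cone points, and such segments are exactly saddle connections), it proves generation directly without appealing to a presentation of the fundamental groupoid of a punctured sphere. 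The paper's argument yields as a byproduct the connectivity statement that any two poles are joined by a chain of saddle connections, which is invoked elsewhere in the text; yours yields uniqueness of the geodesic representative for free from the first part.

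One step of your first stage fails as written: an annular domain has two boundary components, so an excursion of $\gamma$ that crosses the annulus from one component to the other cannot be ``replaced by a boundary arc'' --- there is no deformation retraction of the annulus onto its disconnected boundary relative to the entry and exit points. You must insert one of the saddle connections that traverse the annulus (these exist in abundance by the Muciño-Raymundo--Valero proposition quoted in the paper) and route the excursion as boundary arc, crossing connection, boundary arc. With that repair, and with the straightening made precise as you indicate, the argument is complete.
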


\begin{proof}
	If two saddle connections are end-point-homotopic, then by  Proposition~\ref{prop:index2} the sum of the angles between them at their end-points is null, meaning that they are equal.
	
	The boundary of each periodic/parabolic domain is a chain of saddle connections forming a simple loop around the singularity with a base-point at a pole. All we need to show is that each two poles are connected by a chain of saddle connections.
	
	To each pole we associate the open domain swept by its separatrices in the rotating family \eqref{rotated_beta}, i.e. all the points on the translation model connected to the pole by a straight segment.
	If non-empty, the boundary of this domain in $\Cal S$  must contain another pole connected to the original pole by a saddle connection. 
	The union of the domains of all the poles that are connected to the original one by a chain of saddle connections has therefore an empty boundary, hence it is all of $\Cal S$. 
\end{proof}

Proposition~\ref{prop:fundamentalgroupoid} makes good sense only if $\Sigma\neq\emptyset$, which can always be assumed potentially after adding to it a pole of order 0
in the case of degree $2$ vector fields.

The \emph{relative homology}  $H_1(\Cal S,\Sigma;\Z)$, which is the abelianization of the fundamental groupoid  $\pi_1(\Cal S,\Sigma)$, is therefore also generated by the saddle connections.

\begin{definition}
The \emph{period map} $\nu: H_1(\Cal S,\Sigma;\Z)\to\C$ is given by integration of the time form $dt=\tfrac{Q(z)}{P(z)}dz$
along cycles.  The \emph{period} of a saddle connection $\gamma$ is
$$\nu_\gamma= \int_\gamma\frac{Q(z)}{P(z)}\;dz\neq 0.$$
\end{definition}

\begin{remark}
Even for generic vector fields with all zeros simple, the knowledge of the multiplicities of its poles in $\Sigma$ and of the period map $\nu: H_1(\Cal S,\Sigma;\Z)\to\C$ alone does not determine the translation model. Another combinatorial information expressing the organization of the saddle connections is necessary. In case of polynomial vector fields, the cyclic order in which the periodic domains are attached to the poles provides such information.
\end{remark}

\subsection{The periodgon and star domain}

In order to understand the form of the translation model we shall cut it and unwrap it into a flat simply connected 
domain on the surface of $t(z)$: the star domain, and its core part: the periodgon, defined below.
\\

\begin{definition}~ \label{def:periodgon} 
Consider a rational vector field of degree at least 3, and let $\Sigma$ be the set of poles of positive multiplicity only.
	\begin{enumerate} [wide=0pt, leftmargin=\parindent]
\item \emph{Cuts}:
\begin{itemize}
	\item For each equilibrium $z_j$, choose an end of the periodic/parabolic domain (several choices may be possible) and cut along a separatrix of \eqref{rotated_beta} inside the domain connecting $z_j$ to the end. If $\nu_j\neq 0$, e.g. if $z_j$ is simple, let $\beta=\arg\nu_j+\frac{\pi}{2}$.
	\item For each annular domain cut along (one of) the shortest saddle connection(s) inside the domain (not on the boundary).
	\item For each component of the complement of all the periodic/parabolic/annular domains with at least two poles, keep adding cuts each time along (one of) the shortest heteroclinic saddle connection(s) not intersecting the previous cuts, until 
	the cuts form a tree connecting all the poles. 
\end{itemize}

\item The complement in $\CP^1$ of these cuts is a simply connected domain, the closure of whose image by (a branch of) $t(z)$ \eqref{eq:t} is called the \emph{star domain}.

\item The \emph{periodgon} is  the part of the star domain that corresponds to the complement of all the periodic and parabolic domains. 

\item If in the definition of the star domain and the periodgon we choose instead any set of non-intersecting cuts simply connecting all the poles then the corresponding objects will be called a \emph{generalized star domain} and a \emph{generalized periodgon}.
\end{enumerate} \end{definition}
\bigskip

\begin{remark}~ 
\begin{enumerate}[wide=0pt, leftmargin=\parindent] 
\item If a periodic/parabolic domain has several ends, or if several shortest saddle connections appear in the construction of the cuts, then several choices are possible, leading to several different  periodgons describing the same dynamics. But this is a nongeneric case.	
\item Heteroclinic connections between the poles of a rotated vector field \eqref{rotated_beta} always exist for some $\beta$ because of the monotonous movement of the separatrices of a family of rotated vector fields when $\beta$ varies. See Proposition~\ref{prop:fundamentalgroupoid}.
\item  The (generalized) periodgon is a compact polygonal domain, possibly degenerate, on the translation surface of $t(z)$ with vertices at the conical points.
The projection of the  (generalized) periodgon on $\C$-space may have self-intersections. This is because the translation surface of $t(z)$ is ramified at the images of the poles. 
\item In the case when equilibria are simple, the boundary of the (generalized) periodgon is formed by $d$ segments corresponding to the period vectors $\nu_1,\dots, \nu_d$ of the $d$ 
equilibrium points $z_1,\ldots,z_d$ and  $(n-1)$ pairs of parallel equal vectors of opposite directions corresponding to the travel times for the cuts between the $n$ poles. And the (generalized) star domain is the union of the periodgon and $d$ infinite branches of respective width $\nu_1, \dots, \nu_d$, which are orthogonal to the respective sides  $\nu_1, \dots, \nu_d$.
\item Note that the (generalized) periodgon has no limit in a parametric family when approaching a parabolic point. Indeed, the sides of the periodgon corresponding to the merging points become infinite and their arguments turn with the parameter.
For example, in the family of the form  $\dot z = z^2 -\eps z+O(z^3)$ (resp. $\dot z = z^2-\eps +O(z^3)$) the merging sides are $\sim \pm\frac{2\pi i}{\epsilon}$
(resp. $\sim \pm\frac{2\pi i}{\sqrt\epsilon}$), while their sum has a finite limit given by the period of the limit parabolic point. 
\end{enumerate}\end{remark}

\noindent{\bf Coming back to Example~\ref{example1}.} We consider the system 
\eqref{vf_example}. In Figure~\ref{multiple_cuts}. There are infinitely many  saddle connections between the two poles, of which only the two arcs in $\R\cup\infty$ are of the shortest length and can each serve as a cut  (see Figure~\ref{multiple_cuts}), which yields to two different periodgons. Indeed, the left and right  sides of the periodgon are glued together, forming a cylinder, on which we  have saddle connections that make an arbitrary number of turns around the cylinder.

\begin{figure} \begin{center}
\subfigure[Phase portrait]{\includegraphics[height= 4.5cm]{Phase_cuts}}\qquad\subfigure[Periodgon and star domain]{\includegraphics[height= 4.5cm]{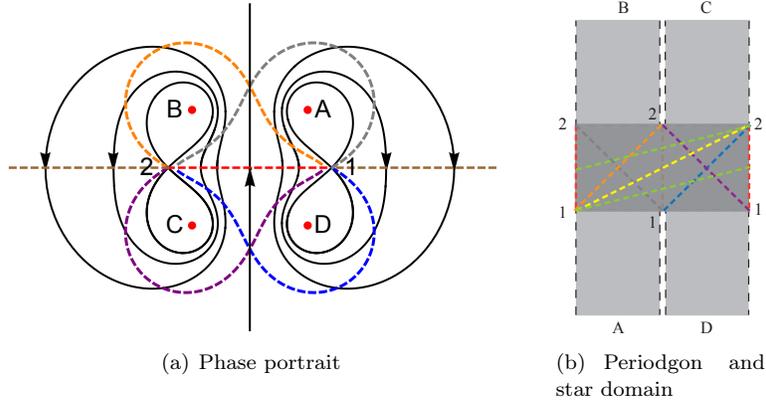}}\caption{In (a), six different saddle connections  between the two poles in the system \eqref{vf_example}; only two of them have the shortest length and can serve as a cut. 
In (b), the star domain is in gray and contains the periodgon in darker gray. On it appear the corresponding  saddle connections on the periodgon, plus two more. In particular the green one spirals around the tubular part of the time surface.}\label{multiple_cuts}\end{center}
 \end{figure}

\begin{example} We consider the system 
\begin{equation}\dot z = i\frac{z^4+1.2}{1-z^2},\label{vf_example2}\end{equation} which is a deformation of \eqref{vf_example}, and whose periodgon is a deformation of the periodgon of Figure~\ref{multiple_cuts}(b) corresponding to the red cut in Figure~\ref{multiple_cuts}(a).  The four centers have now become attracting or repelling foci and we still have the two poles at $z=\pm1$. Moreover, the imaginary axis is still invariant and still belongs to a family of periodic orbits. 
The phase portrait appears in Figure~\ref{fig:example2}(a), and the periodgon and star domain in Figure~\ref{fig:example2}(b). 
\begin{figure} \begin{center}
\subfigure[Phase portrait]{\includegraphics[height= 4.5cm]{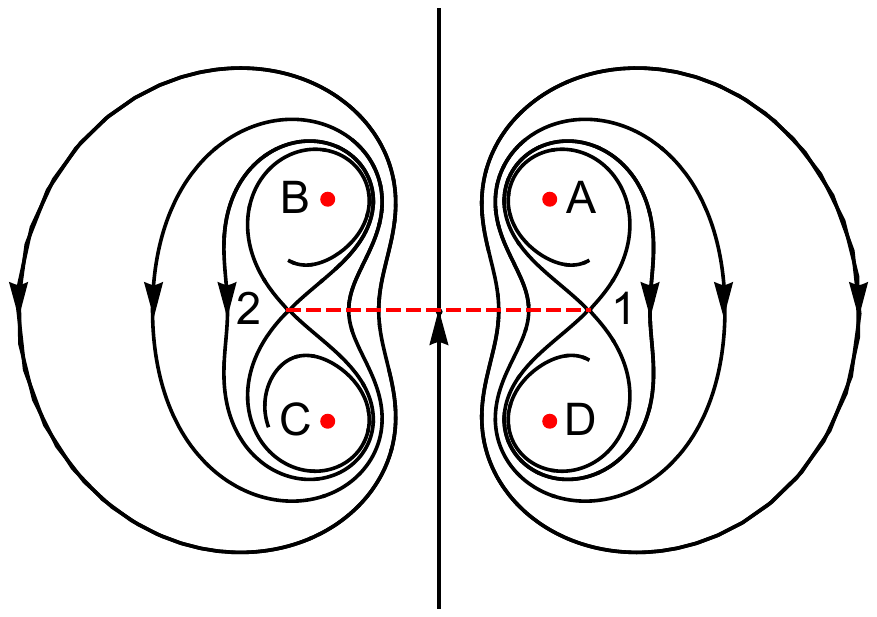}}\qquad\subfigure[Periodgon and star domain]{\includegraphics[height= 4.5cm]{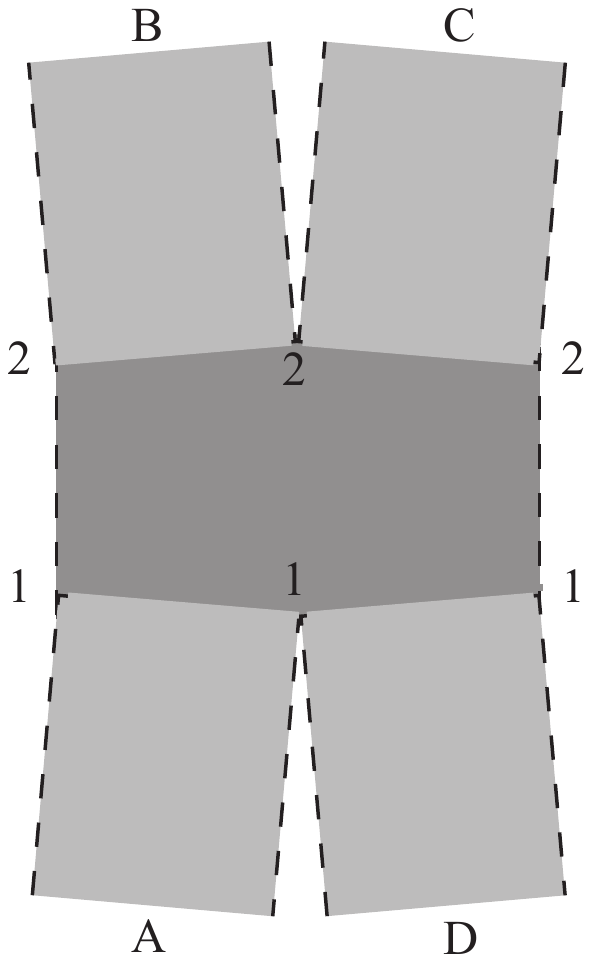}}
\caption{The system \eqref{vf_example2}. There is still an annulus of periodic solutions on the sphere $\CP^1$, one of which is the imaginary axis. The star domain is in gray and contains the periodgon in darker gray.}\label{fig:example2}\end{center} \end{figure}\end{example}

\subsection{Bifurcations of the periodgon}
An important question is to understand how the periodgon and star domain depend on the coefficients of $P$ and $Q$ in a rational vector field of degree $d$ and what are their bifurcations. 
We address this question when $d=4$ in the next section.

\section{The case of a rational vector field of degree $4$}\label{sec:deg4}
Using an affine change of coordinate we can suppose that the vector field has the form 
\begin{equation}\dot z = \frac{z^4+\eta_3z^3+\eta_2z^2+\eta_1z+\eta_0}{1-az^2}.\label{vf_4}\end{equation}
There is still one degree of freedom coming from Moebius transformations that can be used to simplify the system: see Proposition~\ref{prop:normal_form} below. The system is truly rational when $a\neq0$. The periodgon was described in \cite{KR} in the particular case $a=\eta_2=\eta_3=0$,
and in \cite{R19} in the  case $a=\eta_0=\eta_3=0$. We will start by investigating the case $a\neq0$. As a second step we will consider what happens when we let $a=0$.

\subsection{Structure theorem}

\begin{proposition}\label{prop:configuration_4} We consider the vector field \eqref{vf_4} with $a\neq0$.
\begin{enumerate}
\item The only generic configurations of periodic domains are $(3,1)$ and $(2,2)$, namely $3$ (resp. $2$) periodic domains attached to one pole and $1$ (resp. $2$) attached to the other pole. Moreover, these configurations do occur.  
\item A periodic orbit either belongs to the basin of a center, or surrounds on each side two singular points and one pole and belongs to an annulus of periodic orbits. The latter case can only occur in the case of a configuration $(2,2)$. \end{enumerate}
\end{proposition}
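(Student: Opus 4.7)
My plan is to combine two ingredients: Proposition~\ref{prop:index2}, which measures the angular extent of a periodic domain at its pole via the Poincar\'e--Hopf formula, and Lemma~\ref{lemma:configurations}, which constrains what a periodic orbit can enclose.

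For Part~1, I would first observe that when the periodic domain of a simple center $z_j$ is bounded by a single homoclinic loop through a simple pole $p$ (the generic situation), Proposition~\ref{prop:index2} applied to this boundary gives
\[
1 - \frac{\angle_p}{\pi} \;=\; 2\chi - 2\,\ind \;=\; 2 - 2 \;=\; 0,
\]
so the interior angle at $p$ in the translation model is exactly $\pi$. Since each simple pole (order $1$) is a cone of angle $4\pi$, at most four such periodic domains can be attached to any one pole, leaving only $(4,0)$, $(3,1)$ and $(2,2)$ as candidate configurations up to swap.

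Next I would rule out $(4,0)$ in the generic setting. Four $\pi$-sectors attached at $p_1$ must tile its $4\pi$ cone exactly, forcing adjacent sectors to share their bounding separatrix rays. A shared ray is a single half-trajectory, and since a saddle connection at cone angle $\theta$ has rotation $\beta \equiv \theta \pmod \pi$, two adjacent homoclinic loops that share a ray must coincide as unoriented curves. But a single loop bounds only two sectors at $p_1$ whose angles sum to the cone angle $4\pi$, so they cannot both equal $\pi$; this obstructs the tiling, so $(4,0)$ does not occur generically. By contrast, $(3,1)$ and $(2,2)$ leave unused cone angle that is filled by other (e.g.\ $\alpha\omega$-) zones separating the periodic sectors, so no collision arises; both are realized, e.g.\ $(2,2)$ by Example~\ref{example1} and $(3,1)$ by an appropriate perturbation.

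For Part~2, I would apply Lemma~\ref{lemma:configurations}: any periodic orbit encloses $n+1$ zeros and $n$ poles with $n\in\{0,1,2\}$ in degree $4$. The cases $n=0$ and $n=2$ describe the basin of a single center read from opposite hemispheres of $\CP^1$; the remaining case $n=1$ is exactly an annulus enclosing two zeros and one pole on each side. Since annular and periodic domains are pairwise disjoint (Proposition~\ref{prop:periodicdomains}), each zero's periodic domain lies entirely on one side of the annulus and attaches to the unique pole on that same side, so each pole hosts exactly two periodic domains, i.e.\ the configuration must be $(2,2)$. Hence no annulus can arise in a $(3,1)$ configuration. The subtlest step is the ``shared ray $\Longrightarrow$ same loop'' identification at the pole: one has to justify that the data of a cone angle determines a separatrix trajectory uniquely, so two loops meeting in such a ray necessarily coincide and cannot both carry $\pi$-sectors on opposite sides. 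Producing an explicit $(3,1)$ example rather than a deformation argument would round the proof out nicely.
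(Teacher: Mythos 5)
Your proposal is correct and follows essentially the same route as the paper: an angle count at the pole (the paper states the $\tfrac{\pi}{2}$ opening in the $z$-coordinate directly, you derive the angle $\pi$ in the translation model from Proposition~\ref{prop:index2}) bounds the number of periodic domains per pole by four, the configuration $(4,0)$ is excluded by a contradiction at the pole, and part (2) follows from Lemma~\ref{lemma:configurations} together with the disjointness statement of Proposition~\ref{prop:periodicdomains}. The only real divergence is in how $(4,0)$ is excluded: the paper notes that an exact tiling forces all four homoclinic loops to occur for the same rotation angle $\beta$, which is impossible because a simple pole has only four separatrices, whereas you argue that shared boundary rays force adjacent loops to coincide and then derive an angle contradiction --- both are valid, and the ``shared ray $\Rightarrow$ same loop'' step you flag is sound simply because a separatrix is a single trajectory and therefore belongs to a unique homoclinic loop.
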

\begin{proof} 
\begin{enumerate}[wide=0pt, leftmargin=\parindent]
\item An end of a periodic domain at a simple pole has an opening of $\frac{\pi}2$ in the $z$-coordinate. 
Let us show that it is impossible to have four periodic domains attached to one pole. Indeed, for this to occur, the  four ends of the  four periodic domains would need to be all tangent at the pole, and hence  the four domains would need to appear simultaneously for the same angle of rotation.  But the homoclinic loops bounding four periodic domains cannot occur simultaneously, since a simple pole has only four separatrices. 

\item This follows from Lemma~\ref{lemma:configurations}. 
\end{enumerate}
\end{proof}

We will show that the different types of generic configurations of Proposition~\ref{prop:configuration_4} indeed exist in the family \eqref{vf_4}.

\begin{theorem}\label{thm:types} 
In the case of distinct zeros and poles we have generically three types of periodgon, or equivalently, three geometric types of  translation model (see Figure~\ref{pgon_types}):

\begin{itemize} 
\item A periodgon corresponding to configuration $(3,1)$;
\item A periodgon corresponding to configuration $(2,2)$ with no annular domain, noted $(2,2)_{NO}$;
\item A periodgon corresponding to configuration $(2,2)$ with an annular domain, noted $(2,2)_{AD}$. 
\end{itemize}
In all cases the periodgon is planar, i.e. its identical projection from the translation surface of $t$ to $\C$ has no self-intersections. It contains two parallel sides corresponding to a cut between the two poles.   

These three types of periodgon can all bifurcate from the polynomial case $\dot z = P(z)$ when the two poles merge at infinity. In the merging the parallel sides corresponding to the cuts shrink to a point.\end{theorem}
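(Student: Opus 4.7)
The plan is to split Theorem~\ref{thm:types} into four claims and treat them in turn: (a) at most three generic types of periodgon occur; (b) each is realized inside the family \eqref{vf_4}; (c) the periodgon is embedded in $\C$ and carries a single pair of parallel sides coming from the inter-pole cut; (d) each of the three types arises as a limit of polynomial periodgons when the two poles collide at infinity. Claim (a) is essentially immediate: Proposition~\ref{prop:configuration_4}(1) forces the four periodic domains to partition among the two poles as $(3,1)$ or $(2,2)$, and Proposition~\ref{prop:configuration_4}(2) together with Lemma~\ref{lemma:configurations} rules out annular domains in the $(3,1)$ case, leaving the three mutually exclusive labels $(3,1)$, $(2,2)_{NO}$, $(2,2)_{AD}$.

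For (b), I would exhibit one representative of each label. Example~\ref{example1}, namely $\dot z = i(z^4+1)/(1-z^2)$, realizes $(2,2)_{AD}$ because of its invariant imaginary axis; a small generic complex perturbation of its coefficients breaks the real symmetry responsible for that invariant axis and kills the annulus without disturbing the $(2,2)$ partition, yielding a $(2,2)_{NO}$ specimen. For $(3,1)$, I would construct a vector field of the form \eqref{vf_4} by clustering three zeros of the numerator near $z=1/\sqrt a$ and placing the fourth near $z=-1/\sqrt a$, so that three period vectors are short and directed towards the first pole, producing three homoclinic loops attached there while the fourth periodic domain attaches to the opposite pole.

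For (c), I would use Definition~\ref{def:periodgon}. Since the vector field has only two poles, the cut-tree connecting them reduces to a single heteroclinic saddle connection (it is already supplied by the intra-annulus cut in the $(2,2)_{AD}$ case); this produces exactly two parallel identified sides of opposite orientation in the periodgon. The remaining boundary is made of the four period-vector sides $\nu_1,\ldots,\nu_4$, one per simple equilibrium, which close up because $\nu_1+\cdots+\nu_4=0$ by the residue theorem. The periodgon is thus a closed hexagon (possibly degenerate). To verify it embeds in $\C$, I would run a short case analysis for each of the three types, controlling the cyclic order in which the ends of the periodic domains attach to each pole (constrained by Proposition~\ref{prop:periodicdomains}(3)) and the total translation-metric angles at each pole via Proposition~\ref{prop:index2}. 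This planarity verification is the main anticipated obstacle; the $(2,2)_{AD}$ case is the most delicate because the annular domain has one end on each pole and must be cyclically sandwiched between the two periodic domains attached there.

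For (d), I would let $a\to 0$ in \eqref{vf_4}: the two simple poles at $\pm 1/\sqrt{a}$ escape to infinity and merge into one double pole. Along any heteroclinic cut between them the integrand satisfies $Q(z)/P(z)\sim -a\,z^{-2}$ for $|z|$ large, so the travel time shrinks to zero and the two parallel cut-sides collapse to a point. The remaining four sides $\nu_1,\ldots,\nu_4$ depend continuously on parameters and limit onto the quadrilateral periodgon of the polynomial vector field $\dot z = P(z)$ described in \cite{KR}. The argument uses no feature specific to any one of the three types, so by reversing the limit each of them can be made to bifurcate from a suitable polynomial periodgon by letting $a$ grow away from zero, which is the final assertion of the theorem.
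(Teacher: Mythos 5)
Your overall decomposition is sensible, and parts (a) and (d) are fine (the paper's own proof does not even spell out the $a\to0$ degeneration, and your estimate of the vanishing cut period is correct in spirit). The genuine problem is in step (b), in the way you realize the type $(2,2)_{NO}$. You propose to start from Example~\ref{example1}, $\dot z = i\frac{z^4+1}{1-z^2}$, and to ``kill the annulus'' by a small generic perturbation destroying the invariance of the imaginary axis. This does not work: the existence of an annular domain is an \emph{open} condition on the vector field. An annular domain is by definition a periodic annulus of the rotated field \eqref{rotated_beta} for \emph{some} $\beta$, i.e.\ a positive-width strip of parallel segments joining the two sides $\pm\tau$ inside the periodgon; if such a strip exists with positive width, it persists, in a slightly rotated direction, under any small perturbation of the periods. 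The paper makes this explicit: Proposition~\ref{prop:ann_domain} characterizes the boundary of the $(2,2)_{AD}$ stratum as the locus where the parallelogram $v_{12},\tau,v_{23},-\tau$ degenerates, and the deformation \eqref{vf_example2} of \eqref{vf_example} turns the centers into foci yet keeps the annulus (Figure~\ref{fig:example2}). Losing the invariant imaginary axis does not lose the annulus. To produce a $(2,2)_{NO}$ specimen you need either a deformation large enough to cross the bifurcation of Proposition~\ref{prop:ann_domain}, or a different starting point such as the family \eqref{centre_organisateur}, whose $(2,2)$ perturbations have positively oriented parallelogram and hence no annular domain.

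Your $(3,1)$ construction is also too loose: ``three period vectors short and directed towards the first pole'' conflates the periods $\nu_j$ (complex numbers in the $t$-plane) with directions in the $z$-plane, and you give no argument that clustering zeros near a pole forces their homoclinic loops to pass through that pole. The paper avoids both difficulties by proving the Realization Theorem (Theorem~\ref{thm:realization}) first: any hexagon with sides $\nu_{\sigma(1)},\nu_{\sigma(2)},\tau,\nu_{\sigma(3)},\nu_{\sigma(4)},-\tau$ (resp.\ $\nu_{\sigma(1)},\nu_{\sigma(2)},\nu_{\sigma(3)},\tau,\nu_{\sigma(4)},-\tau$) satisfying the angle conditions at the two poles is the generalized periodgon of a degree-$4$ rational vector field, so the three types exist simply because the three model hexagons of Figure~\ref{pgon_types} can be drawn. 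Planarity then follows in the paper from the short cyclic list of sides together with boundedness of the periodgon, rather than from the case analysis you defer; if you keep your route you must actually carry that analysis out, since that is where the content of the planarity claim lives.
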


\begin{proof}
The existence of the three configurations within the general family of degree 4 rational vector fields with simple poles will follow from the Realization Theorem (Theorem~\ref{thm:realization}) and Figure~\ref{pgon_types}. 
The fact that the (generalized) periodgon has no self-intersection is because of the small number of sides which, when negatively oriented, are of the form 
$\nu_{\sigma(1)}, \nu_{\sigma(2)}, \tau, \nu_{\sigma(3)}, \nu_{\sigma(4)}, -\tau$ in a cyclic order for the configuration $(2,2)$, resp. $\nu_{\sigma(1)}, \nu_{\sigma(2)},  \nu_{\sigma(3)}, \tau,\nu_{\sigma(4)}, -\tau$ for the configuration $(3,1)$, where $\sigma$ is some permutation on the indices, and the fact that the periodgon is bounded.
\end{proof}

\begin{figure} \begin{center}
\subfigure[$(3,1)$]{\includegraphics[height=3.2cm]{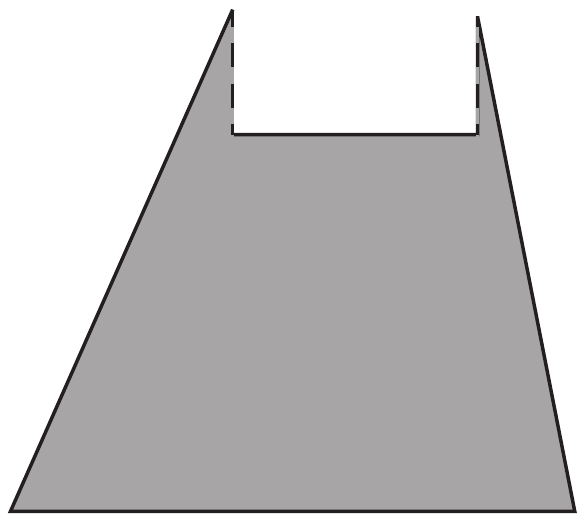}}\qquad\qquad\subfigure[$(2,2)_{NO}$]{\includegraphics[height=3.2cm]{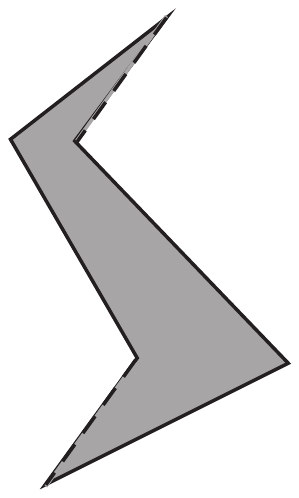}}\qquad\qquad \subfigure[$(2,2)_{AD}$]{\includegraphics[height=3.2cm]{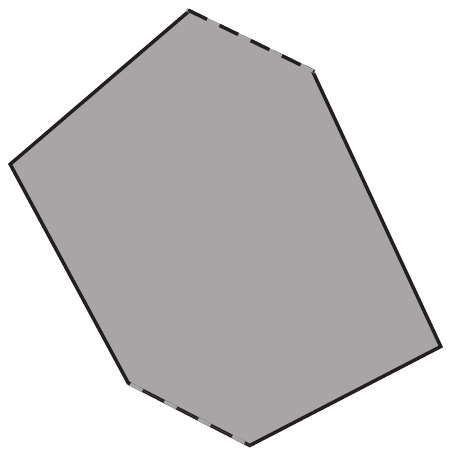}}
\caption{The three types of periodgon of Theorem~\ref{thm:types} associated to the respective configurations.}\label{pgon_types}\end{center}\end{figure}

\subsection{Projective reduction of the family \eqref{vf_4}} 

Simpler forms than \eqref{vf_4} can be found when using projective transformations to simplify the system. 

\begin{proposition}\label{prop:normal_form} 
\begin{enumerate}[wide=0pt, leftmargin=\parindent] 
\item Any system \eqref{vf_4} with $a\neq0$ can be brought by a Moebius transformation depending analytically on the coefficients $(a, \eta_0, \dots, \eta_3)$ to the form \begin{equation} \dot z =C(\eps) \frac{z^4+\eps_3z^3+\eps_2z^2+\eps_1z+\eps_0}{1-z^2},\label{vf_4bis}\end{equation} 
for some $C(\eps)\neq0$.
\item Let  \eqref{vf_4} be a family with $a\neq0$ depending on the multi-parameter  $\eta=( \eta_0, \dots, \eta_3)$ varying in the neighborhood of the origin in $\C^4$.Then there exists a family of Moebius transformations depending analytically on $\eta$ and transforming the family  to the form 
\begin{equation} \dot z =C(\eps) \frac{z^4+\eps_2z^2+\eps_1z+\eps_0}{1-z^2},\label{vf_4quat}\end{equation}  
for some $C(\eps)=1+O(\eps)$. 
\item Let  \eqref{vf_4} be a family depending on the multi-parameter $\eta=(a, \eta_0, \dots, \eta_3)$ varying in the neighborhood of the origin in $\C^5$, and let $C\in \C^*$. Then there exists a family of Moebius transformations depending analytically on $\eta$ and transforming the family  to the form 
\begin{equation} \dot z =C\frac{z^4+\eps_2z^2+\eps_1z+\eps_0}{1-\delta z^2},\label{vf_4ter}\end{equation} 
where $\delta$ and the $\eps_j$ depend analytically on $\eta$ and vanish for $\eta=0$. 
\end{enumerate}
\noindent In (1) and (2), using a real (complex) scaling of time depending real-analytically (resp. analytically) on $\eta$ we take make $C= \exp(i\beta(\eps))$ (resp. $C=1$ or any desired value of $C$).

\end{proposition}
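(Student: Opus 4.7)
The plan is to count degrees of freedom: the Moebius group of $\CP^1$ is three complex-dimensional and, combined with a rescaling of time, provides the four complex parameters needed to normalize the three features appearing in the target forms --- the pole locations (denominator), the cubic coefficient of the numerator, and the overall factor $C$.

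For part (1), the scaling $z \mapsto \sqrt{a}\,z$ (chosen on any simply connected $a$-domain avoiding $0$) sends the poles $\pm 1/\sqrt{a}$ to $\pm 1$ and rewrites \eqref{vf_4} as \eqref{vf_4bis} with $\eps_j = a^{(4-j)/2}\eta_j$ and $C(\eps) = a^{-3/2}$ explicitly. For part (2), starting from \eqref{vf_4bis} (with $a$ fixed), I would apply the one-parameter family of hyperbolic Moebius transformations fixing $\{+1,-1\}$ pointwise, namely $\phi_b(w) = (w+b)/(1+bw)$. Using the identity $(1 + bw)(1 - b\phi_b(w)) = 1-b^2$, one checks that the new denominator stays proportional to $1 - w^2$, and an expansion gives the coefficient of the cubic term in the new numerator as $F(b,\eps) = -4b + \eps_3 + O((|b|+|\eps|)^2)$. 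Since $\partial_b F|_{(0,0)} = -4 \neq 0$, the implicit function theorem yields $b = b(\eta)$ analytic with $b(0) = 0$ killing the cubic coefficient, producing \eqref{vf_4quat}.

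For part (3) the parameter $a$ is allowed to degenerate to $0$, so the scaling used in (1) is unavailable. The key is to use the one-parameter Moebius family
\[\phi_\beta(z) = \frac{z-\beta}{1-a\beta z},\]
which at $a=0$ reduces to the translation $z\mapsto z-\beta$ (the correct normalization in the polynomial case) and for $a\neq 0$ is the hyperbolic Moebius fixing $\{\pm 1/\sqrt{a}\}$ pointwise. Setting $w = \phi_\beta(z)$, a direct computation yields
\[1 - az^2 = \frac{(1-a\beta^2)(1-aw^2)}{(1+a\beta w)^2},\]
so the denominator is preserved in the form $1-aw^2$ up to a scalar (hence $\delta = a$). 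The cubic coefficient of the new numerator takes the form $4\beta + \eta_3 + O((|\beta|+|\eta|)^2)$, so once more the implicit function theorem yields an analytic $\beta = \beta(\eta)$ vanishing at the origin and killing it. A final complex time rescaling by an analytic nonvanishing factor then normalizes the overall constant to any prescribed $C \in \C^*$; the final remark of the proposition is immediate, since a real time rescaling only alters the modulus of the overall constant, not its argument.

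The main obstacle lies in part (3): the Moebius family must depend analytically on $a$ across the degeneration $a=0$, and the specific choice of $\phi_\beta$ above is tailored exactly for this, interpolating between a translation (at $a=0$) and a hyperbolic transformation (for $a\neq 0$). Once this interpolation is in place, the nonvanishing of the relevant partial derivative needed for the implicit function theorem is an explicit and short calculation, and the normalization of the constant $C$ by time rescaling is routine.
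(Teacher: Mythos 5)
Your proposal is essentially the paper's own proof: the same Moebius families (a square-root scaling for (1), the pole-preserving map $\frac{z+b}{1+abz}$ interpolating to a translation at $a=0$ for (2)--(3)), the same computation showing the denominator stays proportional to $1-aZ^2$, and the same implicit-function-theorem argument killing the cubic coefficient via the nonvanishing derivative $\pm4$. The one small deviation is in part (3): the statement asks that the prescribed constant $C$ be attained by the Moebius transformation itself (the closing remark reserves time rescalings for (1) and (2)), so instead of a final time rescaling you should include a scaling factor in the Moebius map, $Z=d\,\phi_\beta(z)$, and solve $\frac{1+O(\eta)}{d^{3}}=C$ for an analytic nonvanishing $d(\eta)$ as the paper does --- an entirely routine adjustment that does not affect the rest of your argument.
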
 

\begin{proof} We consider a change of coordinate $Z=d\frac{z+b}{1+cz}$ with $d(1-bc)\neq0$. The transformed system has poles of the form $\pm Z_0$ if and only if $c=ab$. 
\begin{enumerate} \item Moreover $Z_0=1$, if and only if $a-d^2=0$.  It suffices to take $d=\pm \sqrt{a}\neq0$. 

\item 
By (1) we can start with a system of the form \eqref{vf_4bis}, and consider a change $Z=\frac{z+b}{1+bz}$.
The coefficient of $Z^3$ has the form $h(\eta)(b+O(\eta))$, where $h(0)\neq0$. Hence, by the implicit function theorem it will vanish for some analytic function $b(\eta)$. 
\item We consider a change $Z=d\frac{z+b}{1+abz}$. The coefficient of $Z^4$ is of the form $\frac{1+O(\eta)}{d^3}$, and hence can be taken equal to $C$ for some nonzero analytic function $d(\eta)$. Then the coefficient of $Z^3$ vanishes for $-4bd+ O(\eta)=0$, which can be solved by the implicit function theorem. 
\end{enumerate}
 \end{proof}

\subsection{An example}

We consider the following subfamily:
\begin{equation}\dot z =i \frac{(z^2-\eta_1)(z^2+\eta_2)}{1-z^2},\label{centre_organisateur}\end{equation} 
with $\eta_1,\eta_2\in \R^+$ and $\eta_1>1$ (see Figure~\ref{centre_org}(a)). The system is symmetric with respect to $i\R$ and reversible with respect to $\R$. The singular points $A =\sqrt{\eta_1}$ and $C =-\sqrt{\eta_1}$ are centers and their periodic domains are each attached to one pole, namely $S_1 = 1$ or $S_2 = -1$. The periodic domains of $B=  i \sqrt{-\eta_2}$ and $D=-i\sqrt{-\eta_2}$ are symmetric with respect to $i\R$. Hence they both are bounded by a heteroclinic cycle through the two poles, one side of the heteroclinic cycle being $[-1,1]\subset \R$ (see Figure~\ref{centre_org}(b)).
\begin{figure}\begin{center}
\subfigure[The vector field \eqref{centre_organisateur}]{\includegraphics[width=5cm]{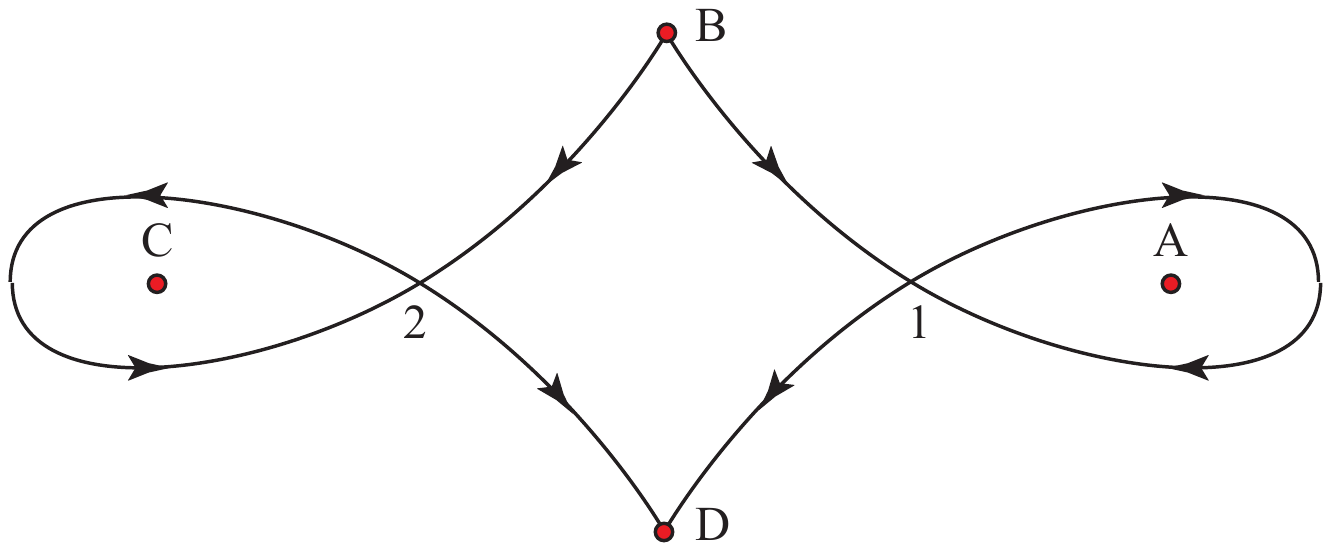}}\qquad \subfigure[The periodic domains]{\includegraphics[width=5cm]{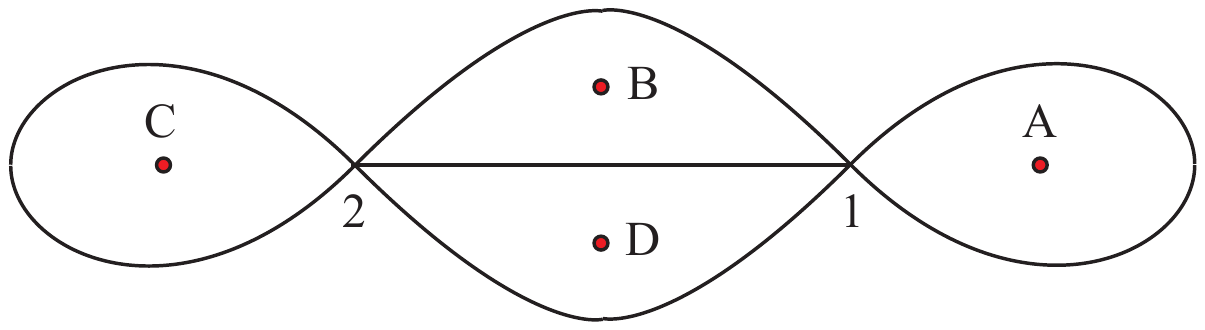}}\caption{The vector field \eqref{centre_organisateur} and the periodic domains of the singular points.}\label{centre_org}\end{center}\end{figure}
There are four ways to draw the periodgon depending on the four ways we take the cuts from each of $B,D$ to one pole $S_j$ (see Figure~\ref{four_ways}), to which correspond four star domains and periodgons (see Figure~\ref{four_periodgons}).

\begin{figure}\begin{center}
\subfigure[]{\includegraphics[width=5cm]{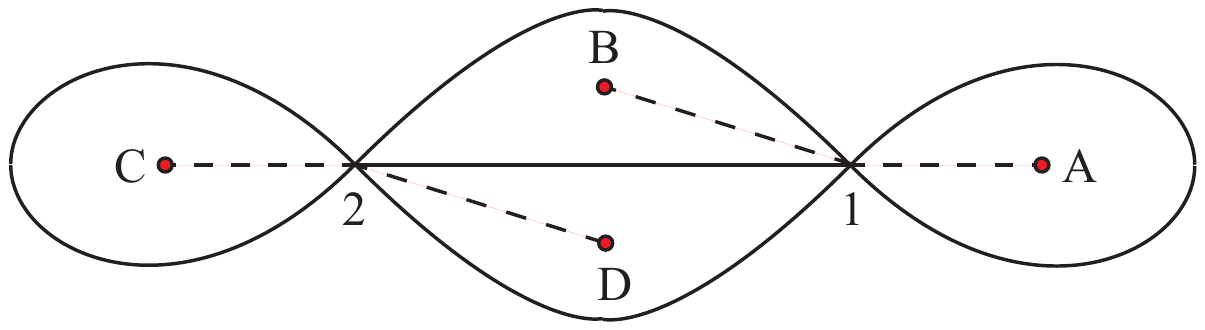}}\quad \subfigure[]{\includegraphics[width=5cm]{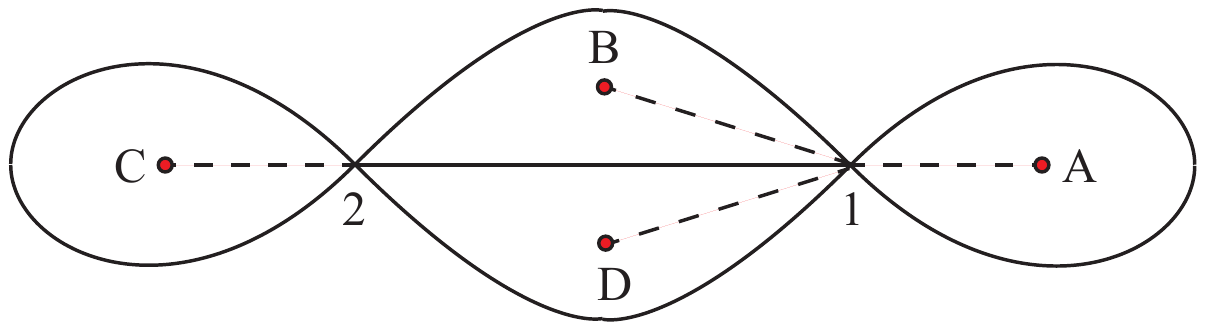}}\\ \subfigure[]{\includegraphics[width=5cm]{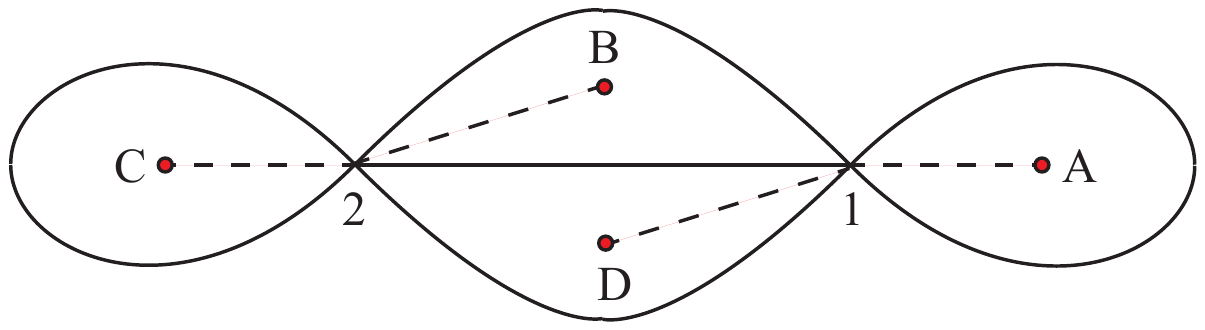}}\quad\subfigure[]{\includegraphics[width=5cm]{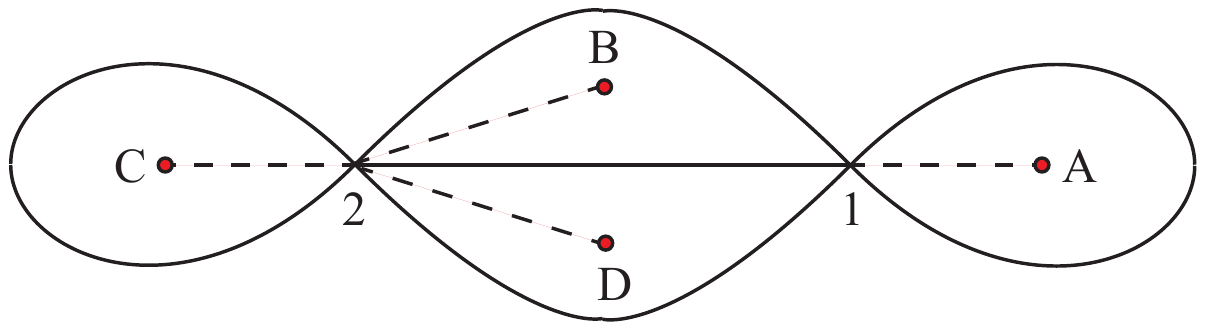}}\caption{The four ways to take the cuts for the system \eqref{centre_organisateur} and its periodic domains.}\label{four_ways}\end{center}\end{figure}

\begin{figure}\begin{center}
\subfigure[]{\includegraphics[width=5cm]{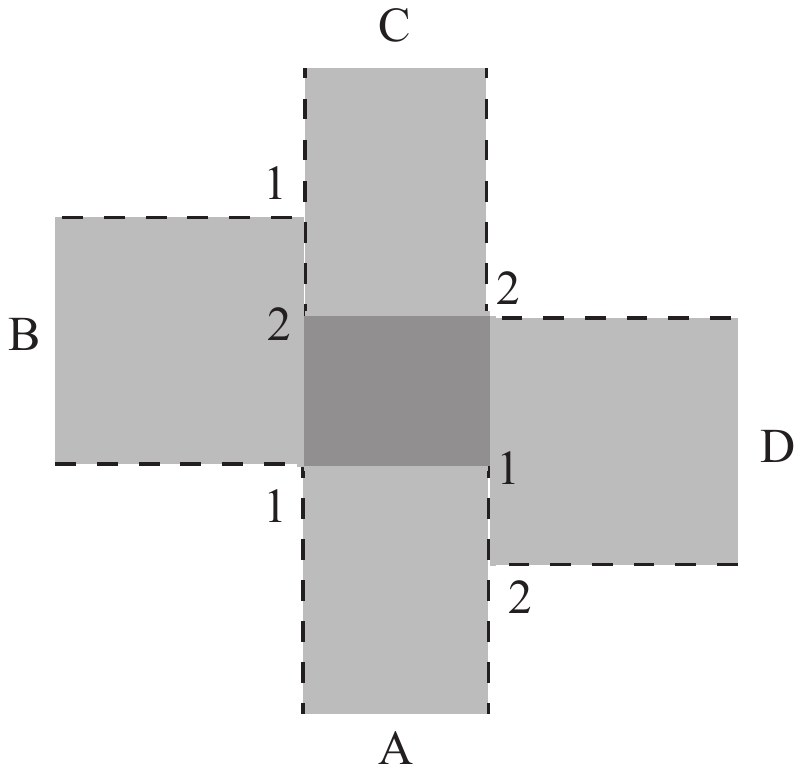}}\quad \subfigure[]{\includegraphics[width=5cm]{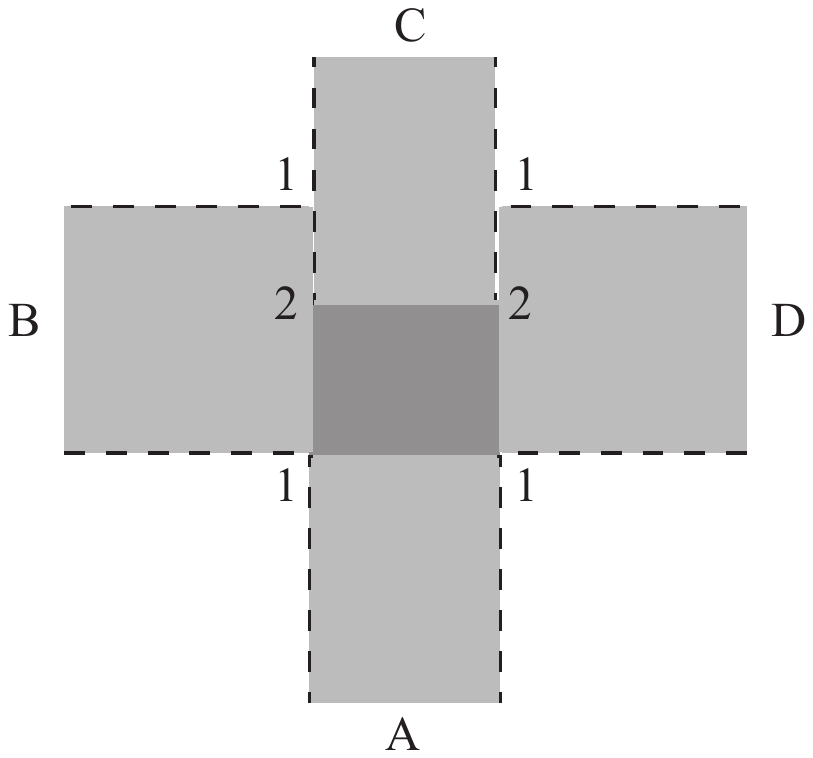}}\\ \subfigure[]{\includegraphics[width=5cm]{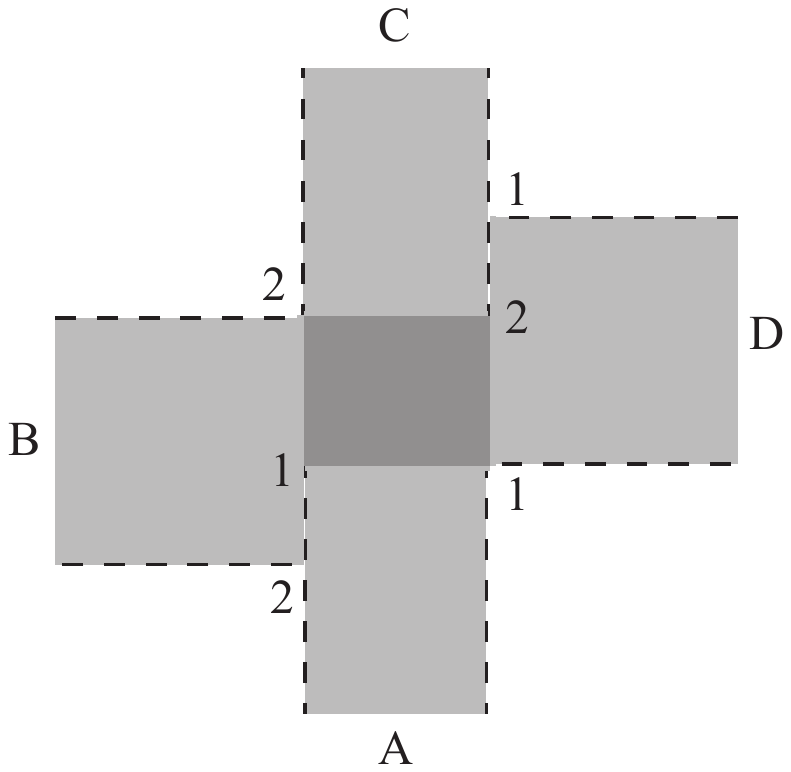}}\quad\subfigure[]{\includegraphics[width=5cm]{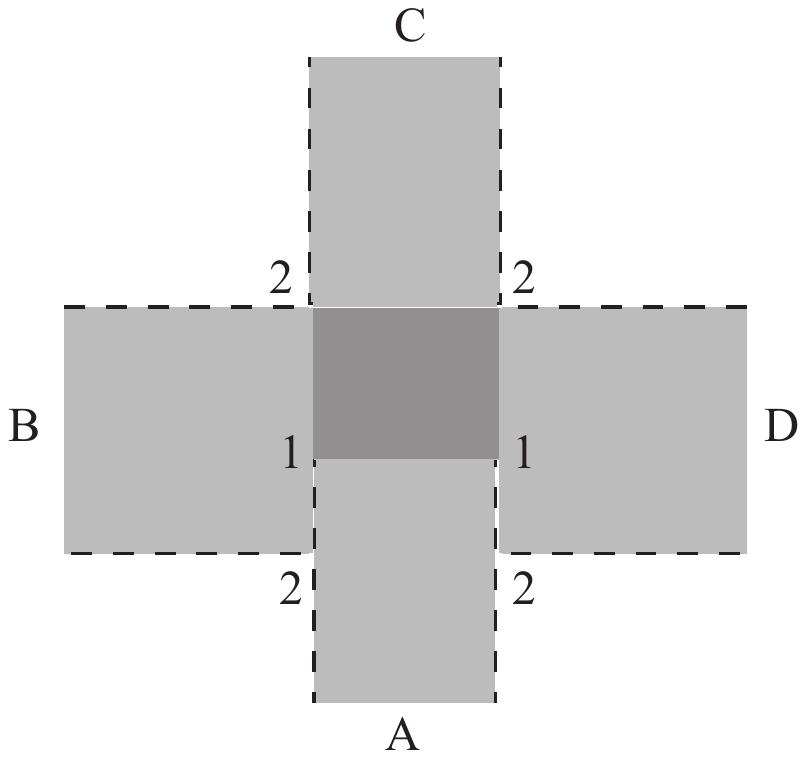}}\caption{The four star domains corresponding to the four ways to take the cuts for the system \eqref{centre_organisateur}. The interior of the periodgon in each case is in darker gray.}\label{four_periodgons}\end{center}\end{figure}

A perturbation breaking the symmetry with respect to $i\R$, while keeping the reversibility with respect to $\R$, for instance
$$\dot z =i \frac{(z^2+\eta_3z -\eta_1)(z^2+\eta_2)}{1-z^2},$$
with $\eta_3$ real small, will then create the configurations $(3,1)_1$ (with the periodic domains of $B,D$ attached to $S_1$) or $(3,1)_2$ (with  the periodic domains of $B,D$ attached to $S_2$) depending on the sign of $\eta_3$. 
A second perturbation by moving $\eta_2$ in \eqref{centre_organisateur} outside $\R$ will keep the symmetry with respect to the origin but will break both the symmetry with respect to $i\R$ and the reversibility with respect to $\R$. It will then create the  two configurations $(2,2)$, where either  the periodic domain of $B$ is attached to $S_1$ and that of $D$ to $S_2$, or the converse. All these configurations appear in Figure~\ref{perturbed_pd}. The corresponding four types of periodgons (possibly with additional deformations) are illustrated in Figure~\ref{deformed_periodgon}.
\begin{figure}\begin{center}
\subfigure[]{\includegraphics[width=5cm]{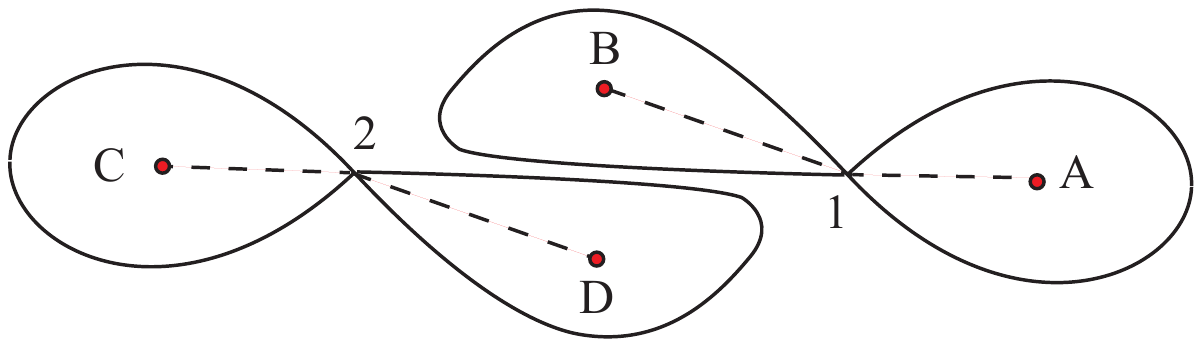}}\qquad \subfigure[]{\includegraphics[width=5cm]{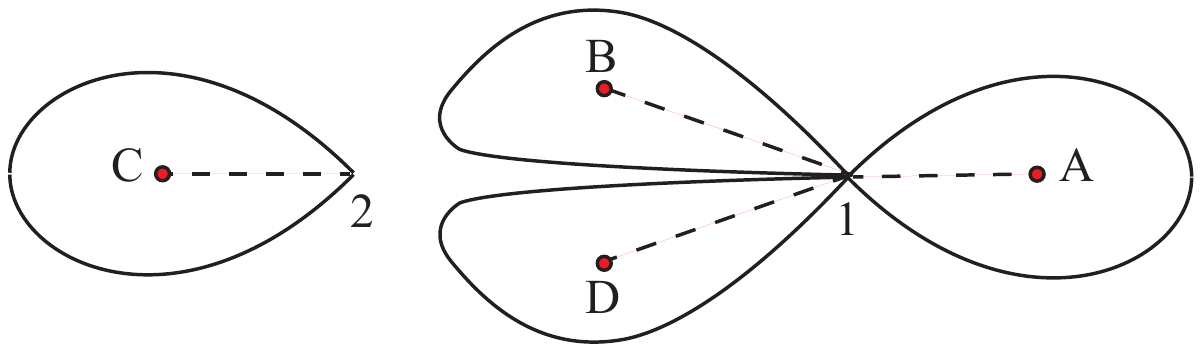}}\\ \subfigure[]{\includegraphics[width=5cm]{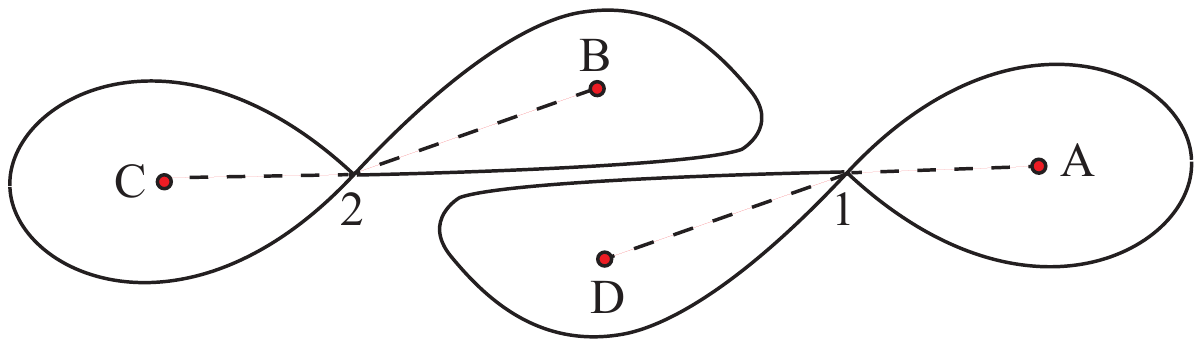}}\qquad\subfigure[]{\includegraphics[width=5cm]{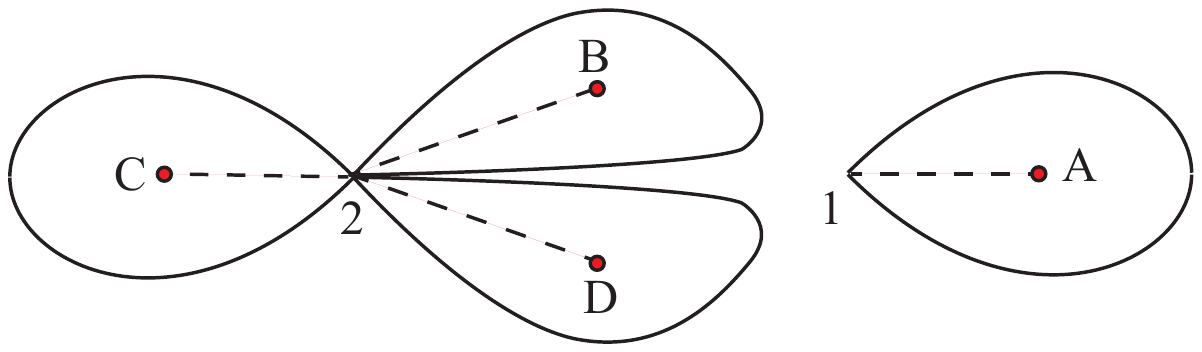}}\caption{The deformed periodic domains occurring in perturbations of \eqref{centre_organisateur}.}\label{perturbed_pd}\end{center}\end{figure}
\begin{figure}\begin{center}
\subfigure[]{\includegraphics[width=5cm]{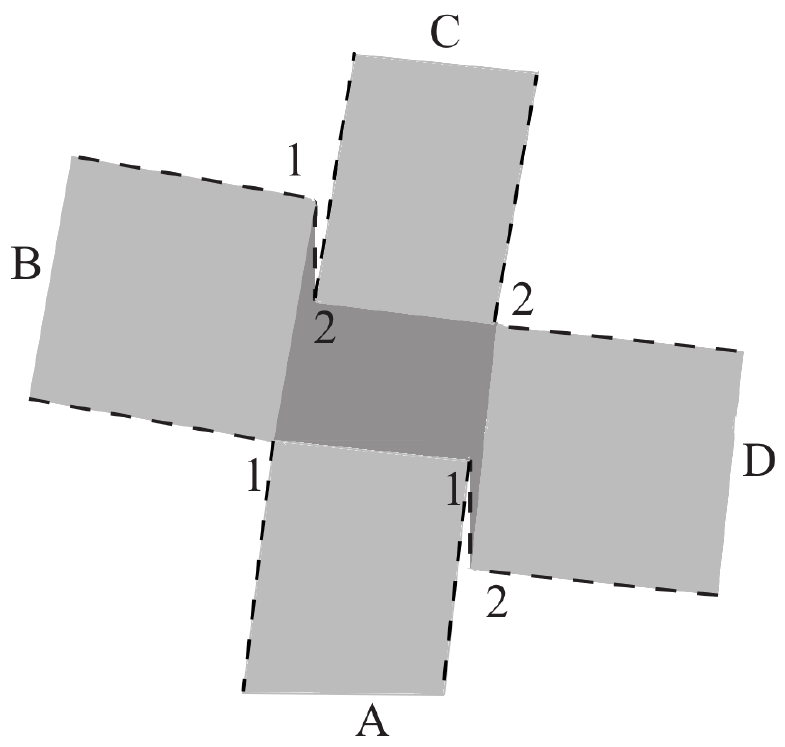}}\qquad \subfigure[]{\includegraphics[width=5cm]{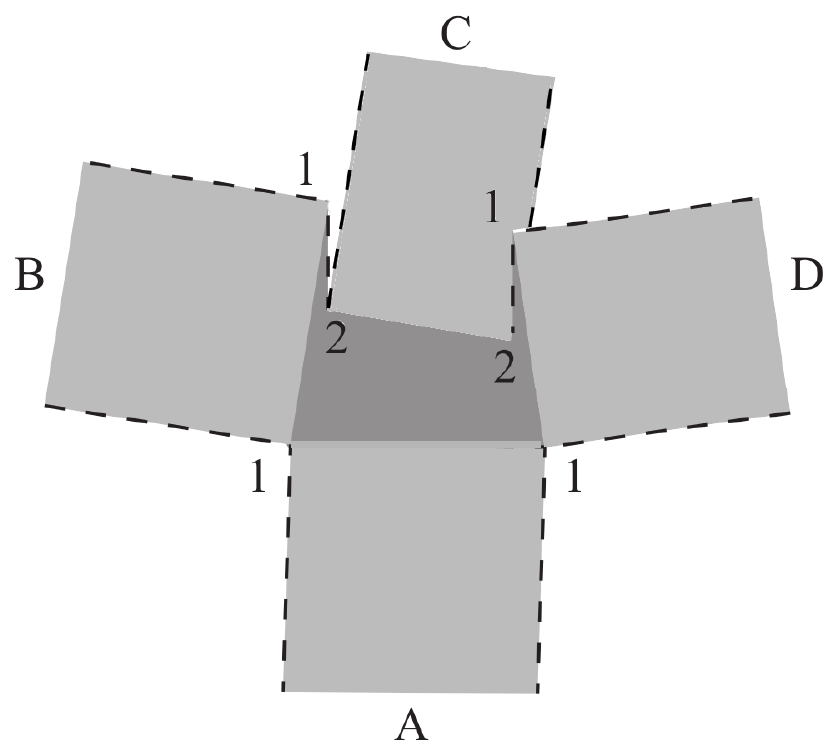}}\\ \subfigure[]{\includegraphics[width=5cm]{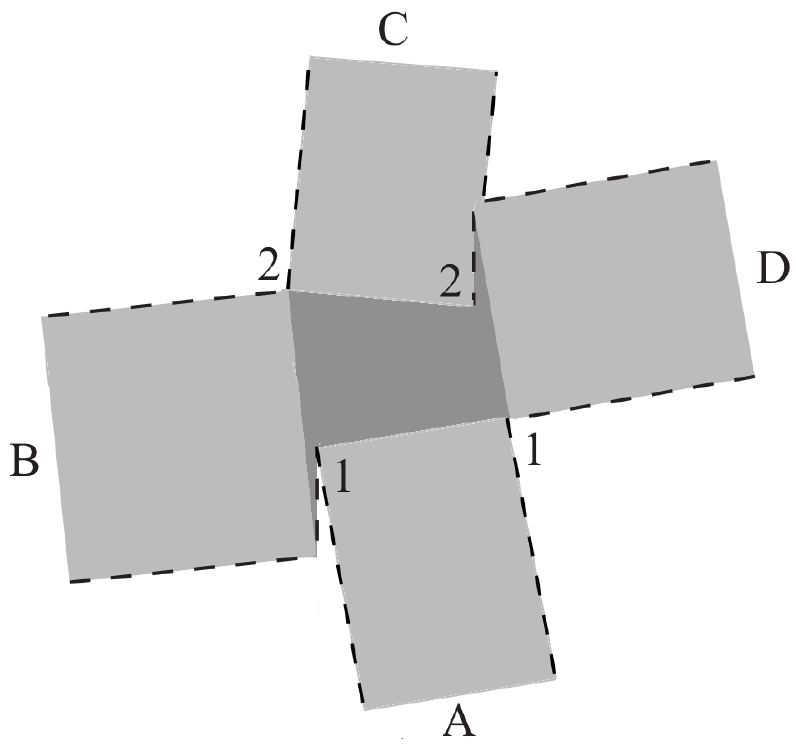}}\qquad\subfigure[]{\includegraphics[width=5cm]{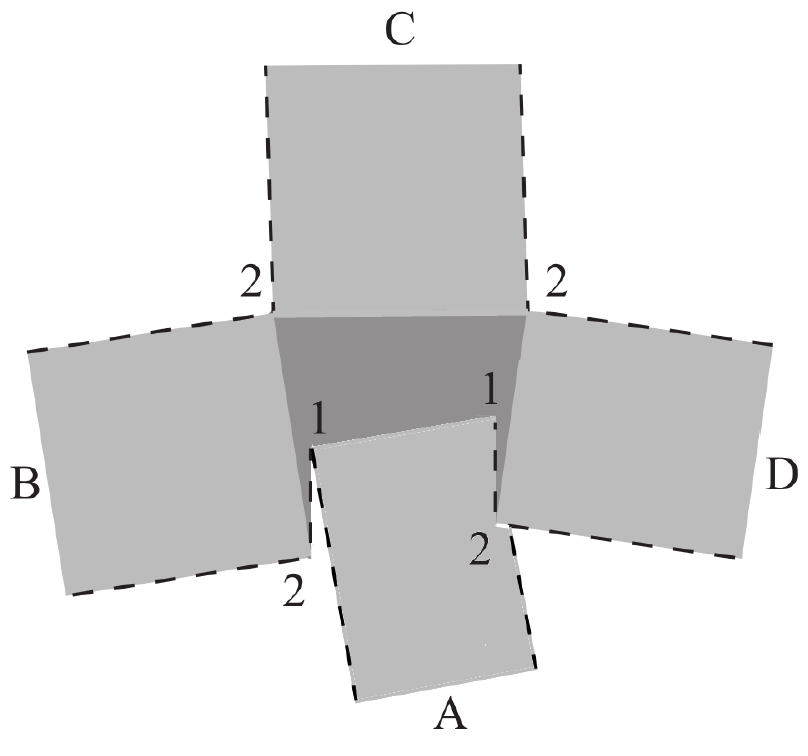}}\caption{The four types of periodgons and star domains occurring  in perturbations of \eqref{centre_organisateur}.
}\label{deformed_periodgon}\end{center}\end{figure}
We see the general form of the periodgon: it has four segments corresponding to homoclinic loops around the four singular points and two segments corresponding to a cut between the poles.

\subsection{Bifurcation between the two configurations $(3,1)$ and $(2,2)$} 
In the family \eqref{centre_organisateur} and Figure~\ref{centre_org} we have seen a bifurcation between the two configurations $(3,1)$ and $(2,2)$. But this was a codimension $2$ phenomenon where two singular points had their periodic domains bounded by heteroclinic loops through the two poles. The phenomenon however is of ocodimension $1$ and a bifurcation occurs as soon as one singular point has a periodic domain bounded by a heteroclinic loop through the two poles (see Figure~\ref{bif_conf}). The bifurcation is easily described through the change of the periodgon (see Figure~\ref{bif3-1_2-2}).
\begin{figure}\begin{center}
\subfigure[]{\includegraphics[width=5cm]{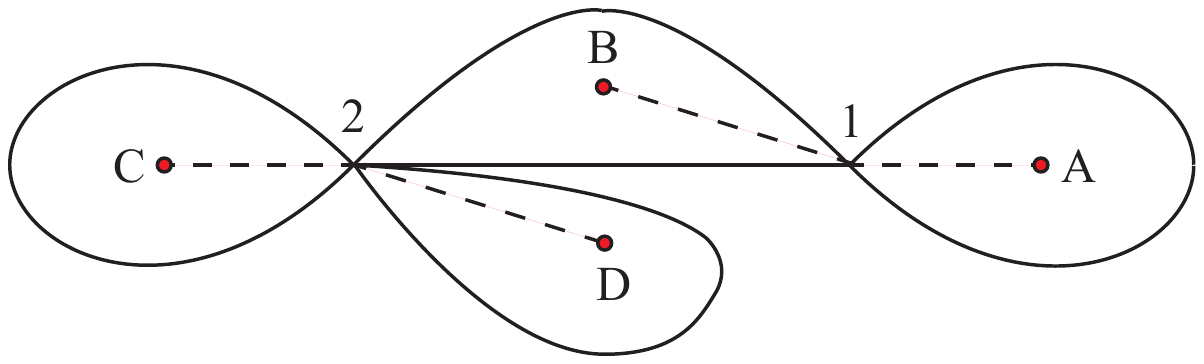}}\qquad \subfigure[]{\includegraphics[width=5cm]{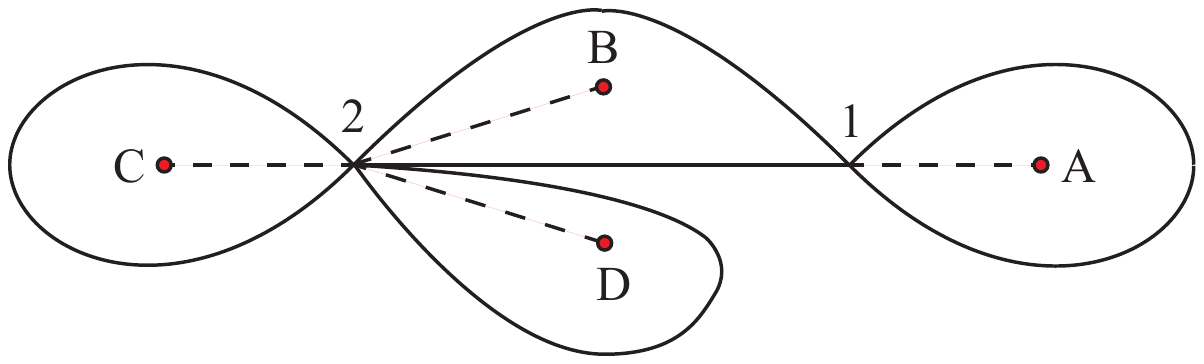}}\\ \subfigure[Deformation $(2,2)$]{\includegraphics[width=5cm]{deform_1}}\qquad\subfigure[Deformation $(3,1)$]{\includegraphics[width=5cm]{deform_4}}\caption{The periodic domains in a generic bifurcation between the configurations $(3,1)$ and $(2,2)$ and the two ways to choose the cuts leading to the deformations  in (c) and (d).}\label{bif_conf}\end{center}\end{figure}

\begin{figure}\begin{center}
\subfigure[]{\includegraphics[height=5cm]{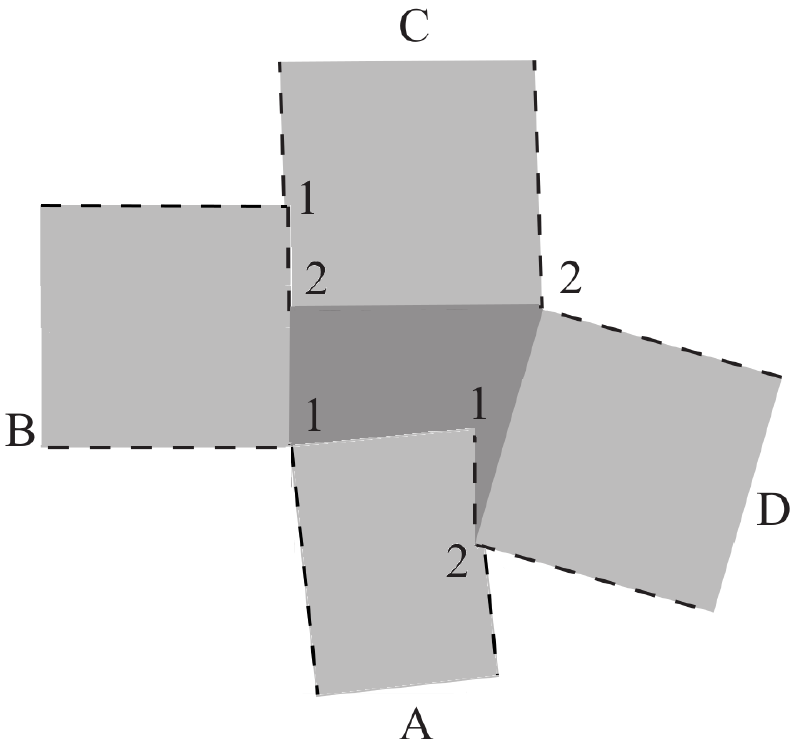}}\qquad \subfigure[]{\includegraphics[height=5cm]{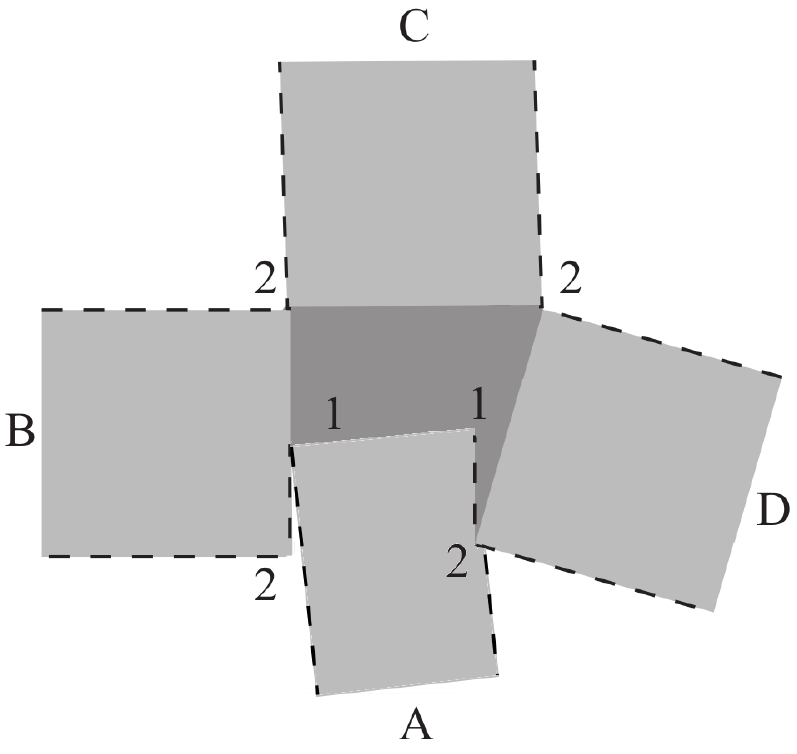}}\\ 
\subfigure[Deformation $(2,2)$]{\includegraphics[height=5cm]{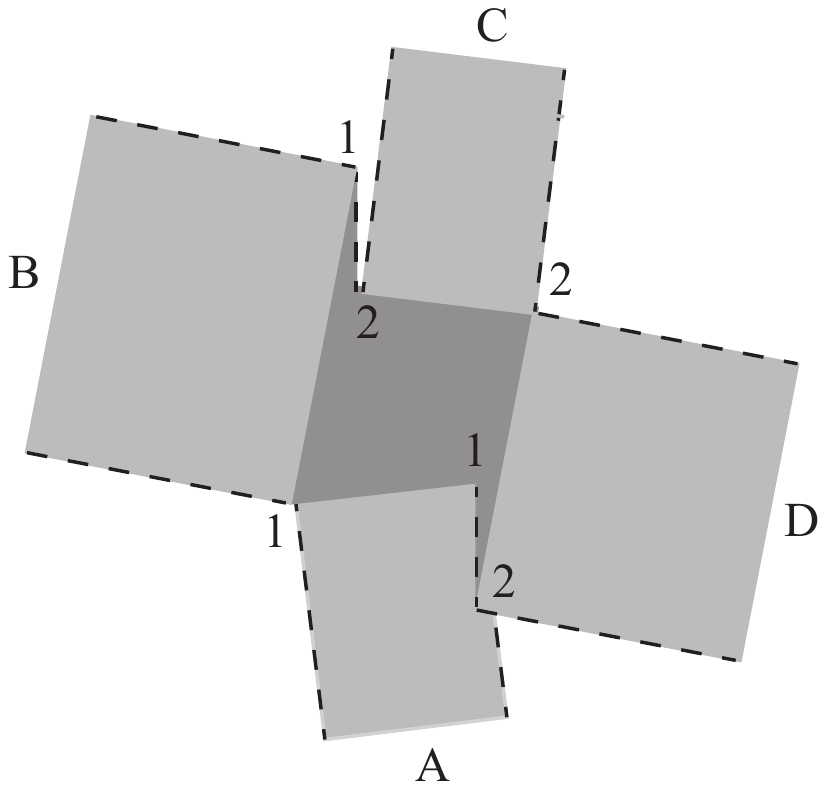}}\qquad\subfigure[Deformation $(3,1)$]{\includegraphics[height=5cm]{deformed_periodgon4x}}\caption{The periodgon and star domains in the generic bifurcation between the configurations $(3,1)$ and $(2,2)$ of Figure~\ref{bif_conf}. In (a) and (b) the same bifurcation situation with two choices of periodgon depending on the cuts and in (c) and (d) the deformations.
}\label{bif3-1_2-2}\end{center}\end{figure}

\subsection{Bifurcation creating an annular domain}

When considering a rotated vector field $\dot z = e^{i\alpha} \frac{P(z)}{Q(z)}$ of $\dot z =  \frac{P(z)}{Q(z)}$, its corresponding periodgon is obtained by rotating the periodgon of $\dot z =  \frac{P(z)}{Q(z)}$ of an angle $-\alpha$. A rotated vector field $\dot z = e^{i\alpha} \frac{P(z)}{Q(z)}$ has an annulus of periodic solutions  if there exists inside the periodgon a continuum of parallel segments joining the two parts of the boundary corresponding to the cut between the poles. Such a continuum does not exist in any of the periodgons of  Figure~\ref{deformed_periodgon}. When moving further from \eqref{centre_organisateur}, in the case of the configuration $(2,2)$, the periodgon will be deformed. Is it possible to have a continuous deformation leading smoothly to the formation of an annular domain (for instance as in Figures~\ref{multiple_cuts} and ~\ref{fig:example2})? This is the phenomenon that we now study. 

We consider a vector field \eqref{vf_4} with simple singular points and poles, and such that each periodic domain is  attached to only one pole. 
Let $\nu_1, \nu_2,\nu_3,\nu_4$ be the four period vectors of the singular points of \eqref{vf_4} and let $\pm\tau$ be the period of the chosen saddle connection between the two poles along which one cuts. Note that $\nu_1+\nu_2+\nu_3+\nu_4=0$.
There exists a permutation $\sigma$ of $\{1, 2, 3, 4\}$ such that the periodgon has, for configuration $(2,2)$, sides given by the vectors
$$\nu_{\sigma(1)}, \nu_{\sigma(2)}, \tau, \nu_{\sigma(3)}, \nu_{\sigma(4)}, -\tau,$$
in this circular order.  Note that the $\tau$ vector can never be aligned with two  $\nu_{\sigma(j)}$ located between the two $\tau$. Indeed, a simple pole has four separatrices. Hence if for some $\beta$ the vector field \eqref{rotated_beta} has a heteroclinic connection, it can have simultaneously at most one homoclinic connection.  

We want to characterize those realizable periodgons for which the realized vector field has an annular domain. This comes to characterize geometrically those periodgons for which there exists, inside the periodgon, a continuum of parallel segments joining the two segments $\tau, -\tau$ of the boundary. Each part of the boundary of  an annulus of periodic solutions not surrounding a center can be of two kinds: \begin{itemize} 
\item a single homoclinic connection; 
\item a pair of homoclinic connections forming a figure eight (as in Figure~\ref{fig:example1}).
\end{itemize} 

\begin{proposition}\label{prop:ann_domain} The vectors $
v_{12}=\nu_{\sigma(1)}+ \nu_{\sigma(2)}, \tau, v_{23}=\nu_{\sigma(3)}+ \nu_{\sigma(4)}, -\tau$ can be considered as the oriented boundary of a parallelogram, yielding an orientation to the parallelogram. A necessary condition for the existence of an annular domain is that a parallel strip between the two sides $\pm \tau$ be included inside the periodgon. For this to occur, it is necessary that the parallelogram be negatively oriented. Hence a bifurcation creating an annular domain is one that reverses the orientation of the parallelogram. 
This occurs precisely when  the four vectors $v_{12}, \tau, v_{23}, -\tau$ become aligned: see Figure~\ref{fig:bif_periodic}. The corresponding bifurcation creating an annular domain is a heteroclinic loop with no return map in a rotated vector field. It appears in Figure~\ref{fig:bif_per_bis}.\end{proposition}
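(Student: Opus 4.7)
My plan is to separate the proposition into four claims and treat them in order, leaning on the orientation convention of Theorem~\ref{thm:types}.

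First, I would establish the parallelogram structure. The residue theorem on $\CP^1$ gives $\nu_1+\nu_2+\nu_3+\nu_4=0$, so $v_{12}+v_{23}=0$; the four directed sides $v_{12},\tau,v_{23},-\tau$ therefore close up, and since opposite sides satisfy $v_{23}=-v_{12}$ while the two $\tau$-sides are anti-parallel by definition, the closed quadrilateral is a (possibly degenerate) parallelogram whose signed area $\Im(\overline{v_{12}}\tau)$ determines its orientation.

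Second, I would recast the annular-domain question geometrically. By Proposition~\ref{prop:translationmodel}(1) an annular domain of any rotated vector field \eqref{rotated_beta} is a finite flat cylinder in the translation model whose two boundary components meet the cut $\tau$; cutting along $\tau$ unrolls the cylinder into a maximal strip of parallel segments in the periodgon joining the $\tau$ side to the $-\tau$ side, and conversely any such strip re-glues to an annular domain. Thus the existence of an annular domain is equivalent to the existence of such a strip inside the periodgon.

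Third, I would carry out the orientation computation. By Theorem~\ref{thm:types} the boundary of the periodgon in the cyclic order $\nu_{\sigma(1)},\nu_{\sigma(2)},\tau,\nu_{\sigma(3)},\nu_{\sigma(4)},-\tau$ is negatively oriented, so its interior lies to the right of each directed edge: the inward direction along $\tau$ is proportional to $-i\tau$ and along $-\tau$ to $+i\tau$. These two half-planes can intersect between the two edges only when the starting vertex $v_{12}$ of the $\tau$-side lies on the correct side of $\tau$, which a one-line computation reduces to $\Im(\overline{v_{12}}\tau)<0$, exactly negative orientation of the parallelogram. The outward bumps at the $v_{12}$ and $v_{23}$ edges can only further obstruct a strip, never produce one that the parallelogram forbids, so the sign condition remains necessary for the hexagonal periodgon.

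Finally, I would identify the bifurcation and its dynamical meaning. Continuity of $\Im(\overline{v_{12}}\tau)$ in the parameters forces any orientation reversal to pass through $\Im(\overline{v_{12}}\tau)=0$, which combined with $v_{12}+v_{23}=0$ places the four vectors on a common line. At this moment the endpoints of $\tau$ and $-\tau$ are collinear in the translation model, producing a new straight segment between two poles parallel to $\tau$; lifting to $\CP^1$, this is a new heteroclinic saddle connection of \eqref{rotated_beta} with angle $\arg\tau$, which together with the original cut closes into a heteroclinic loop between the two poles. Since the loop is the vanishing boundary of the emerging cylinder, its transverse modulus goes to zero and no return map is defined, matching the bifurcation of Figure~\ref{fig:bif_periodic} and Figure~\ref{fig:bif_per_bis}. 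The main obstacle is the orientation computation in step three: once the right-versus-left bookkeeping dictated by the negatively oriented cyclic order of Theorem~\ref{thm:types} is set, the rest is elementary.
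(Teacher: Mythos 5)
Your argument is correct and takes essentially the same route as the paper, which states this proposition without a separate proof, relying on the preceding reduction of annular domains to a continuum of parallel segments joining the $\pm\tau$ sides of the periodgon and on the accompanying figures. Your write-up merely makes explicit the orientation bookkeeping (the sign of $\Im(\overline{v_{12}}\tau)$ forced by the negatively oriented cyclic order of the sides) and the degeneration to the aligned, heteroclinic-loop case; I see no gaps.
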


Note that the parallelogram is positively oriented in all the periodgons of Figure~\ref{deformed_periodgon} and that none of the corresponding vector fields has an annular domain.

\begin{figure}\begin{center}
\subfigure[An annular domain]{\includegraphics[width=3.8cm]{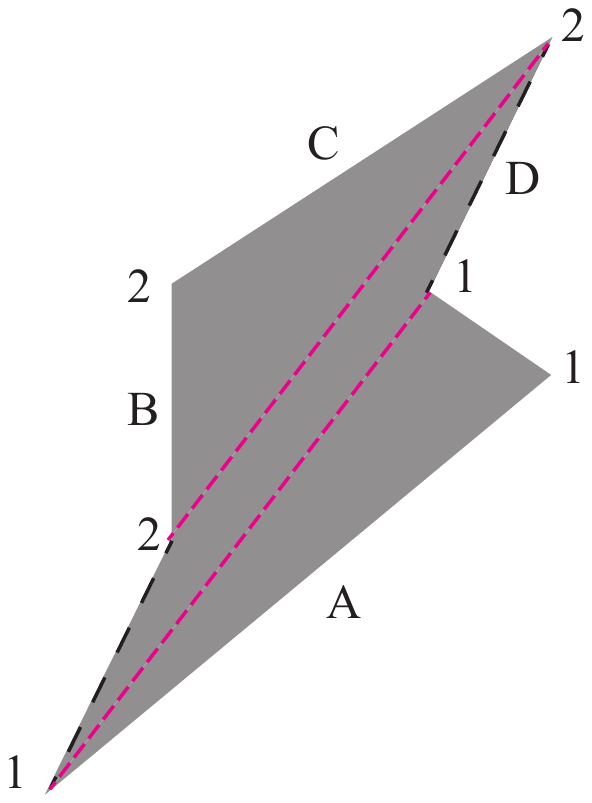}}\quad \subfigure[The bifurcation]{\includegraphics[width=3.8cm]{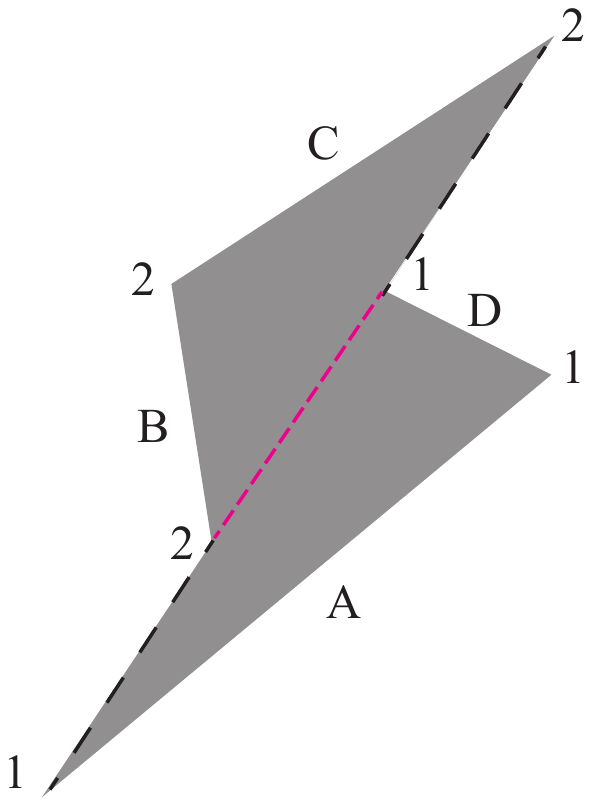}}\quad \subfigure[No annular domain]{\includegraphics[width=3.8cm]{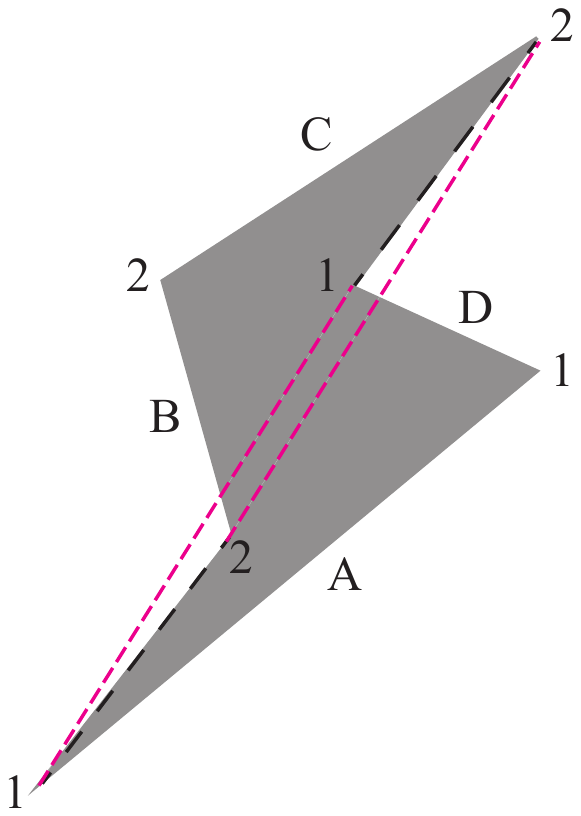}}\caption{The bifurcation of the periodgon in (b) between (a), where the vector field has an annular domain, and (c), where the vector field has no annular domain.}\label{fig:bif_periodic}\end{center}\end{figure}
\begin{figure}\begin{center}
\includegraphics[width=3.8cm]{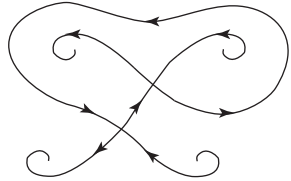}\caption{A heteroclinic loop with no return map. }\label{fig:bif_per_bis}\end{center}\end{figure}

The bifurcation described in Proposition~\ref{prop:ann_domain}  explicitly occurs inside rational vector fields of degree $4$. Indeed, consider the family 
\begin{equation}\dot z = i \frac{\left((z-a)^2+\frac12\right)\left((z^2-2)^2+\frac12\right)}{1-z^2},\label{family_heart}\end{equation} depending on real $a$. The bifurcation of heteroclinic loop occurs for $a\in(-0.8,-0.6)$, see Figure~\ref{fig:heart}. This special case was called the \emph{heart} by Ilyashenko in \cite{I} in planar vector fields. The bifurcation diagram in the generic case was studied by Dukov \cite{D}. The poles are transformed into saddles if one multiplies the vector field by $(1-z^2)(1-\bar z^2)$. One specific feature of the bifurcation diagram is the existence of two semi-infinite sequences of bifurcation curves where heteroclinic connections occur. For each $n\in \N$, there is one heteroclinic connection starting from the first (resp. second) pole to the second (resp. first pole) and making exactly $n$-turns and a fraction around two anti-saddles. The phenomenon has been called \emph{sparkling separatrices} by Ilyashenko \cite{I}.

\begin{figure}\begin{center}
\subfigure[$a=-0.8$]{\includegraphics[height=3.5cm]{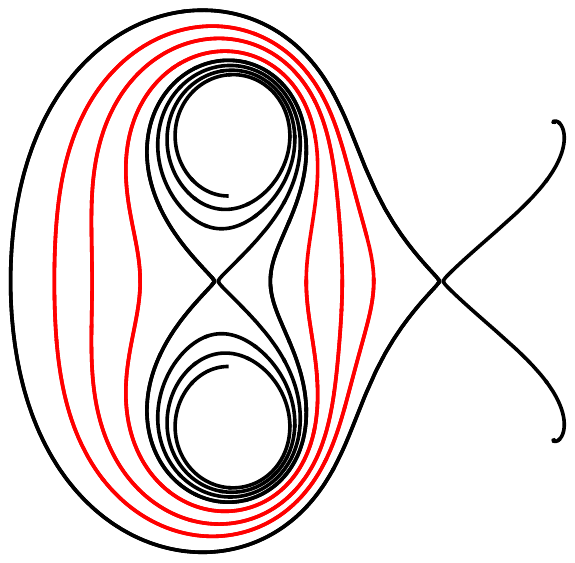}}\qquad\subfigure[$a\sim-0.6554$]{\includegraphics[height=3.5cm]{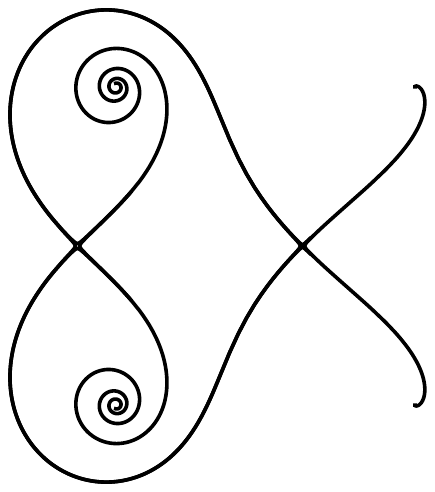}}\qquad\subfigure[$a=-0.6$]{\includegraphics[height=3.5cm]{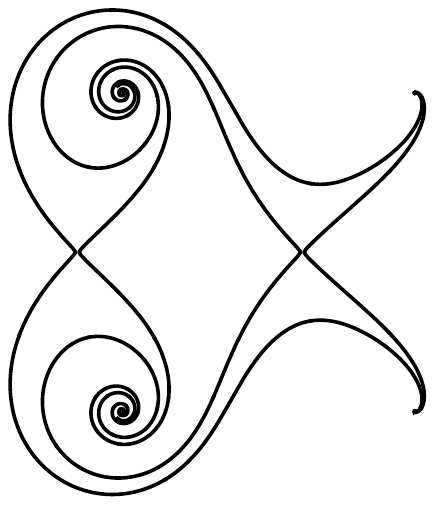}}\caption{The heart bifurcation in \eqref{family_heart}. For $a<-0.6554$ we see an annulus of periodic solutions bounded by two homoclinic connections.
}\label{fig:heart}\end{center}\end{figure}

A heteroclinic loop is a real codimension 2 bifurcation. Hence its bifurcation diagram requires two parameters. This bifurcation diagram indeed occurs in rational vector fields of degree $4$ inside the family 
$$\dot z = i e^{i\theta} \frac{\left((z-a)^2+\frac12\right)\left((z^2-2)^2+\frac12\right)}{1-z^2}$$
depending on the two parameters $a$ and $\theta$: see Figure~\ref{fig:bif_diag}. The bifurcation diagram can also be read from the periodgon: see Figure~\ref{fig:bif_diag2}. 

\begin{figure}\begin{center}
\includegraphics[width=13cm]{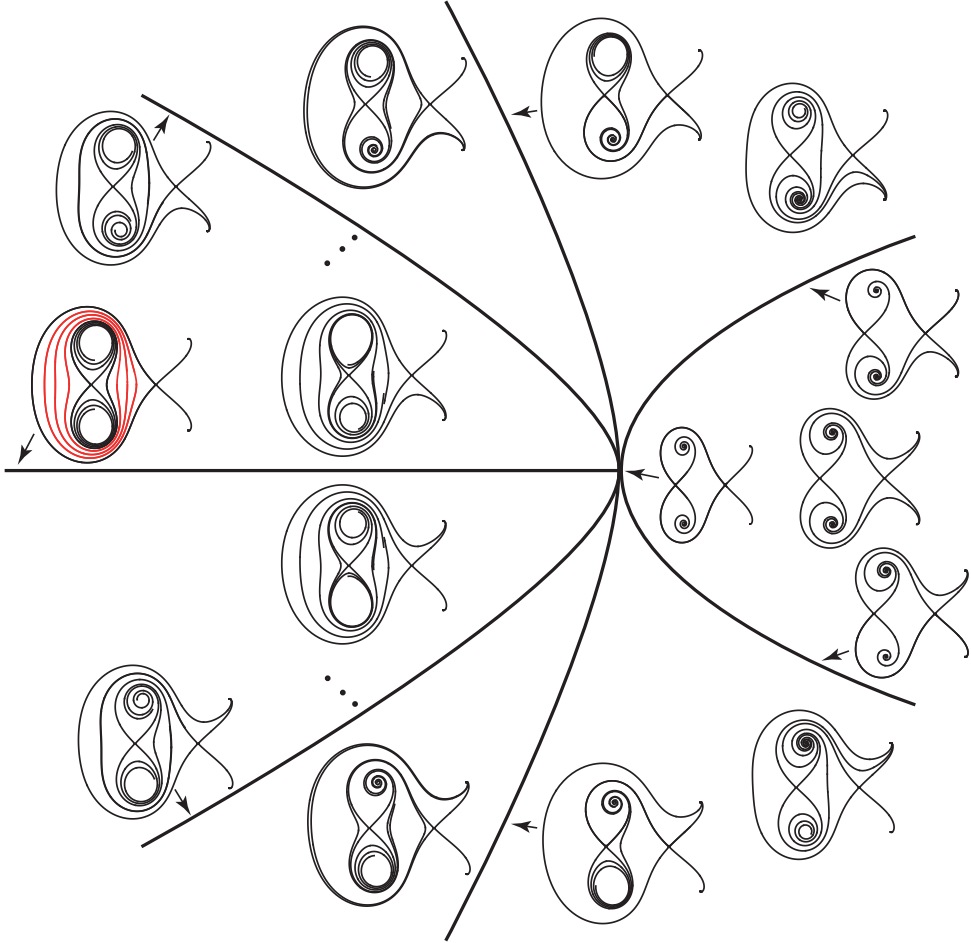}\caption{The topological $2$-parameter bifurcation diagram of the heteroclinic loop of Figure~\ref{fig:bif_per_bis}: an annulus of period orbits occurs along a half-curve in parameter space. There are two infinite sequences of heteroclinic connections occuring in the upper and lower left quadrants. }\label{fig:bif_diag}\end{center}\end{figure}

\begin{figure}\begin{center}
\subfigure[]{\includegraphics[width=5.6cm]{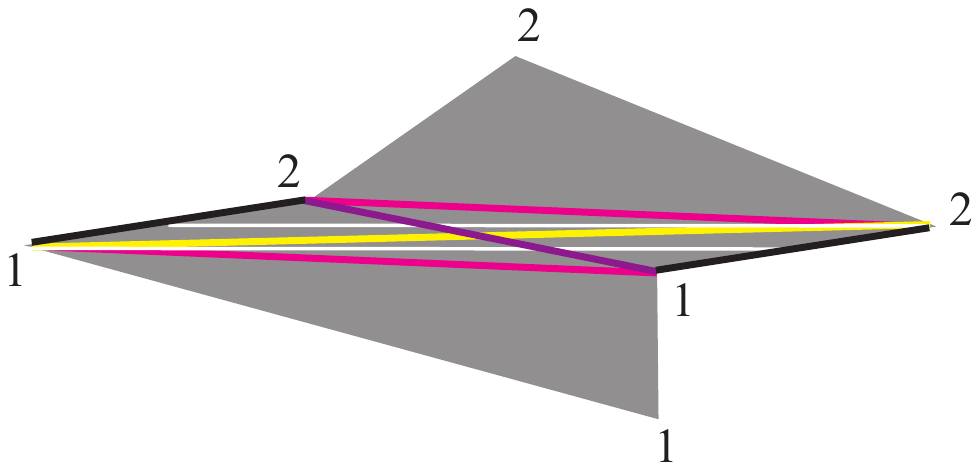}}\qquad\subfigure[]{\includegraphics[width=5.6cm]{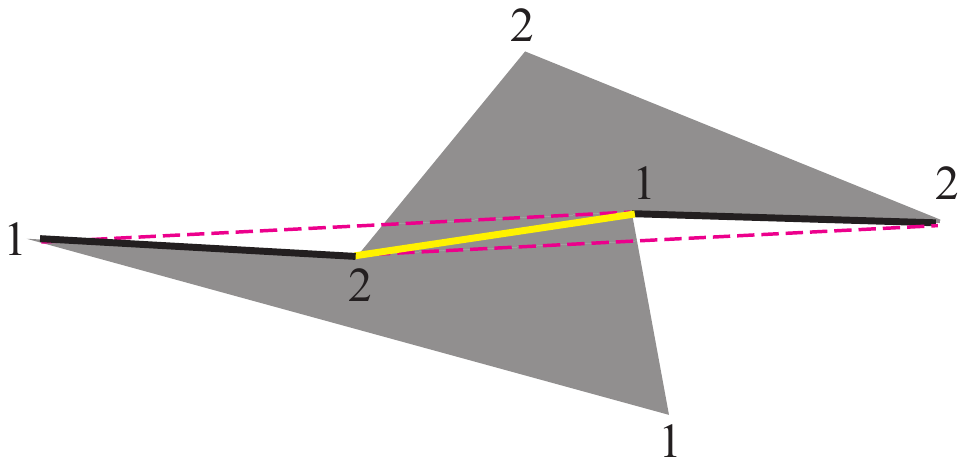}}\caption{The rotated periodgons of Figure~\ref{fig:bif_periodic}. 
Some  homoclinic or heteroclinic connections occur along the thick segments (and the white segments in (a)) when the periodgon is turned so that these are horizontal. These phenomena occur along the  bifurcation curves of Figure~\ref{fig:bif_diag}.
In (a) there is an infinite sequence of saddle connections inside the annular domain from 1 to 2 (when oriented form left to right) of increasing length and decreasing slope: the shortest is the black line, then the yellow line, then the white line, etc., and an infinite sequence of saddle connections from 2 to 1, the shortest of which is the purple line.
Only two of these saddle connections survive in (b) (represented by the black and yellow lines in the figure).
}\label{fig:bif_diag2}\end{center}\end{figure}

\begin{remark} The real codimension $2$ bifurcation of heteroclinic loop of Figure~\ref{fig:bif_per_bis} does not split the parameter space. But since we are only interested in the bifurcation of the \lq\lq shape\rq\rq\ of the periodgon, regardless of its rotations, 
 we quotient the parameter space by the group of rotations  \eqref{rotated_beta}, which in the family \eqref{vf_4} are induced by the action 
$$\begin{cases}z\mapsto e^{-i\frac{\beta}{3}}z,\\ 
(a,\eta_0,\eta_1,\eta_2,\eta_3)\mapsto(e^{i\frac{2\beta}{3}}a,e^{-i\frac{4\beta}{3}}\eta_0,e^{-i\beta}\eta_1,e^{-i\frac{2\beta}{3}}\eta_2,e^{-i\frac{\beta}{3}}\eta_3).\end{cases}$$
Then this becomes a real codimension $1$ bifurcation, which splits the quotient parameter space. \end{remark}

\subsection{Realization of a generalized periodgon and star domain} 
\begin{theorem} \label{thm:realization}
	Let $\nu_1, \nu_2,\nu_3,\nu_4\in \C^*$ such that $\nu_1+\nu_2+\nu_3+\nu_4=0$ and let $\tau\in\C^*$ be given. Let us consider the two (circular) sequences below.  \begin{enumerate}
\item $\nu_{\sigma(1)}, \nu_{\sigma(2)}, \tau, \nu_{\sigma(3)}, \nu_{\sigma(4)}, -\tau$ (for configuration $(2,2)$);
\item $\nu_{\sigma(1)}, \nu_{\sigma(2)},  \nu_{\sigma(3)}, \tau,\nu_{\sigma(4)}, -\tau$ (for configuration $(3,1)$).\end{enumerate} 
We consider each sequence as the ordered sequence of sides of a polygon over a translation surface with ramification points at the vertices. 
As a first condition we ask that the closed polygonal curve with sides given by the sequence is the oriented boundary (in the positive direction, i.e. the domain is to the left of the boundary) of an unbounded open domain in $\C$, the complement of which is a (possibly degenerate) polygon. 
In particular no sides can intersect transversally in their interior, but some sides can coincide with opposite direction.  
Let us now attach numbers $1$ or $2$ (representing the two poles) to the vertices of the polygon with the rule that the same number is attached to the two ends of a side $\nu_{\sigma(j)}$ and different numbers are attached to the two ends of  $\pm \tau$. 
We ask that the sum of the inner angles of the periodgon attached to each of the two numbers is equal to:   \begin{enumerate} 
\item $2\pi$ at each pole for configuration $(2,2)$;
\item $\pi$ (resp. $3\pi$) for the pole  attached to $3$ (resp. $1$) sides (resp. side) $\nu_j$ for configuration $(3,1)$.\end{enumerate}
Then the polygon can be realized as a generalized periodgon of a rational vector field of degree $4$. 

\end{theorem}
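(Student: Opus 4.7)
The plan is to construct the translation model of the desired vector field directly from the prescribed metric data, and then recover the rational vector field on $\CP^1$ from it. Let $\Pi\subset\C$ denote the closed bounded polygonal region whose positively oriented boundary realizes the given cyclic sequence of sides (well defined because the sequence bounds an unbounded domain). First I would glue to each side $\nu_{\sigma(j)}$ of $\Pi$ a semi-infinite flat cylinder $C_j$ of circumference $|\nu_{\sigma(j)}|$ by identifying the base circle of $C_j$ isometrically with the side (collapsing the two endpoints of the side to a single point of the circle), and I would glue the sides $\tau$ and $-\tau$ of $\Pi$ to each other by the translation sending one to the other. The resulting space $\Cal S$ is a connected flat surface with four cylindrical ends.

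Next I would verify the cone angle at each pole. The vertex-labeling rule, combined with the cylinder- and $\tau$-gluings, forces all vertices of $\Pi$ carrying the same label to collapse to a single point in $\Cal S$, producing exactly two cone points $P_1,P_2$. The total cone angle at $P_k$ equals the sum of the inner angles of $\Pi$ at the vertices labeled $k$, plus an additional $\pi$ for each half-cylinder attached at $P_k$: the base circle of each $C_j$ passes through its attachment point as a locally smooth geodesic, contributing a half-plane of angle $\pi$ on the cylinder side. Under the hypotheses of the theorem, this yields $2\pi+2\pi=4\pi$ at each pole in configuration $(2,2)$, and $\pi+3\pi=4\pi$ at the three-sided pole together with $3\pi+\pi=4\pi$ at the one-sided pole in configuration $(3,1)$. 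So both $P_1$ and $P_2$ are cone singularities of total angle $4\pi$, matching the angle at a simple pole of a rational vector field.

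I would then verify the global topology and compactify. Since $\Cal S$ is flat away from two cone points each of angular defect $-2\pi$, Gauss-Bonnet gives $\chi(\Cal S)=-2$. Capping each cylindrical end with a single point $z_j$ yields a compact surface $\overline{\Cal S}$ with $\chi=-2+4=2$, hence of genus zero. The translation atlas on $\Cal S\sminus\{P_1,P_2\}$ extends to a holomorphic atlas on $\overline{\Cal S}$ using local coordinates $w=e^{2\pi i t/\nu_{\sigma(j)}}$ at the cylindrical ends and $t-t_k\sim\tfrac{1}{2}(z-P_k)^2$ at the cone points, so $\overline{\Cal S}$ is a compact Riemann surface of genus zero, i.e.\ biholomorphic to $\CP^1$.

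Finally, $dt$ extends as a meromorphic form on $\CP^1$ with a simple zero at each $P_k$ (forced by the $4\pi$ cone angle) and a double pole at each $z_j$ with residue $\nu_{\sigma(j)}/(2\pi i)$ (read off from the circumference of $C_j$). The dual vector field $V$ defined by $V\cdot dt=1$ is then rational of degree $4$ with two simple poles at $P_1,P_2$ and four simple zeros of the prescribed periods, and the generalized periodgon of $V$ obtained by cutting along the heteroclinic saddle connection represented by $\tau$ and along the chosen separatrices inside each periodic domain reproduces $\Pi$ by construction. The hard part will be the cone-angle bookkeeping, which is what converts the combinatorial angle hypotheses into the correct order of vanishing of $dt$ at the $P_k$; once this is established, the identification with $\CP^1$ and the construction of $V$ follow from Gauss-Bonnet and a direct analysis of the abelian form $dt$ at each singular point.
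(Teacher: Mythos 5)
Your construction is essentially the paper's own proof: glue semi-infinite cylinders of circumference $\nu_{\sigma(j)}$ to the $\nu$-sides, identify the $\tau$ and $-\tau$ sides, observe that the angle hypotheses force a total cone angle of $4\pi$ at each of the two labelled points, and then uniformize to recover $\CP^1$ and the vector field; your explicit cone-angle bookkeeping and the Gauss--Bonnet verification of genus zero are useful supplements to steps the paper only asserts. One local error should be fixed: $dt$ extends with a \emph{simple} pole (not a double pole) at each cylinder end $z_j$ --- in your own coordinate $w=e^{2\pi i t/\nu_{\sigma(j)}}$ one has $dt=\tfrac{\nu_{\sigma(j)}}{2\pi i}\tfrac{dw}{w}$ --- and this is exactly what makes the final count work: the divisor of $dt$ has degree $2\cdot 1-4\cdot 1=-2=2g-2$ with $g=0$, and the dual field $V$ then has simple zeros at the $z_j$ and is rational of degree $4$. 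If $dt$ really had double poles there, the degree would be $-6$ (contradicting genus zero) and $V$ would have parabolic points at the $z_j$, contradicting the conclusion you state.
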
\begin{proof} Let us  build the associated star domain by adding orthogonal semi-infinite strips of width $\nu_{j}$ on the corresponding sides of the polygon, which we  endow of the constant vector field $\dot t=1$. We glue together the two sides $\tau$ and $-\tau$, and the sides of the semi-infinite strips (yielding semi-infinite cylinders), thus obtaining a translation surface with conical singularities at the points $1$ and $2$. Removing these two  points, the translation surface is conformally equivalent to $\CP^1$ minus six points. Because the total angle is $4\pi$ at each of the points $1$ and $2$, a local uniformizing coordinate is given by $Z= \sqrt{t-t_0}$ with transforms $\dot t=1$ into $\dot Z= \frac1{2Z}$, i.e. the point is a pole. A uniformizing coordinate at the end of the semi-infinite cylinders of width $\nu_j$ is $Z = \exp\left(\frac{ 2\pi it}{\nu_j}\right)$, transforming $\dot t=1$ into $\dot Z= \frac{2\pi i}{\nu_j}Z$, i.e. $z_j$ is a simple singular point. Hence we have constructed a rational vector field of degree $4$ on $\CP^1$.
\end{proof}

\begin{remark} Note that all conditions enumerated in the theorem are necessary conditions. Hence the theorem is a characterization of realizable generalized periodgons.
The realisation map is however not bijective: one needs to identify those generalized periodgons that give rise to the same translation model and therefore the same vector field (up to a Moebius transformation). 	
An open question is to find necessary and sufficient condition for realizable periodgons (not generalized).\end{remark}

\subsection{Polynomial vector fields of degree 4} 

As a consequence we can see what kind of periodgons can appear for a polynomial system of degree $4$. 

The periodgon of a polynomial system of degree $4$ is always a planar polygon (its projection from the translation surface of $t(z)$ to $\C$ has no self-intersection). It can be flat (four sides aligned) with three sides in one direction and one in the other as in $\dot z = i z(z^3-1)$, or with two sides in each direction as in $\dot z = (z^2+1)(z^2+2)$. 

The periodgon bifurcates generically when two adjacent sides become aligned in opposite directions, as in $\dot z = (z^2-1) (z-i)(z-2i) $. In that case, one of the periodic zone is surrounded by two homoclinic loops (see Figures~\ref{two_concentric_homo} and \ref{bifur_periodgon}). 
Indeed, we can remark that as soon as one center is surrounded by two  homoclinic loops, it separates the other singular points in two groups of, respectively, one and two points. The singular point in the group of $1$ is then surrounded by a homoclinic loop and hence, necessarily a center. 

\begin{figure}\begin{center}
		\includegraphics[width=5cm]{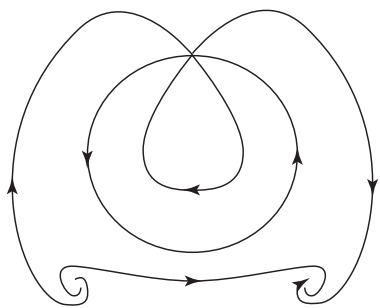}\caption{The phase portrait of $\dot z = (z^2-1) (z-i)(z-2i) $ with two homoclinic loops surrounding $z=2i$.}\label{two_concentric_homo}\end{center}\end{figure}

\begin{figure}\begin{center}
		\subfigure[]{\includegraphics[width=5cm]{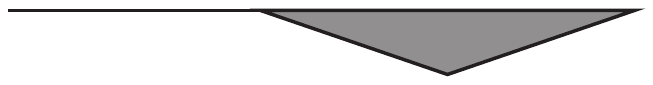}}\qquad\subfigure[]{\includegraphics[width=5cm]{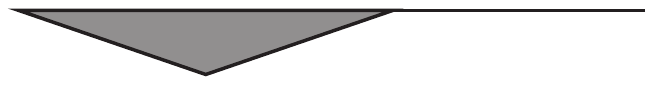}}\\
		\subfigure[]{\includegraphics[width=5cm]{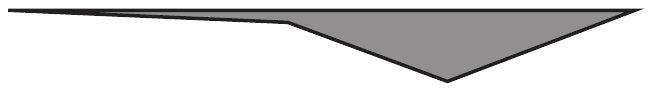}}\qquad\subfigure[]{\includegraphics[width=5cm]{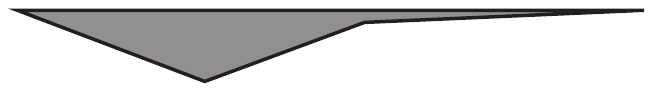}}
		\caption{In (a) and (b) the two periodgons of $\dot z = (z^2-1) (z-i)(z-2i) $, and in (c) and (d) the unique periodgons on both sides of the bifurcation. }\label{bifur_periodgon}\end{center}\end{figure}

\section*{Acknowledgements} The authors are grateful to Arnaud Ch\'eritat for stimulating discussions.


\begin{thebibliography}{XX00}
	
	


\bibitem[Be91]{Ben} H.E.~Benziger, \textit{Plane Autonomous Systems with Rational Vector Fields}, Transactions of the American Mathematical Society \textbf{326}(2) (1991), 465--484.

\bibitem[BD10]{BD} B.~Branner, K.~Dias, \emph{Classification of complex polynomial vector fields in one complex variable}, J. Diff. Eq. Appl., \textbf{16} (2010), 463--517. 

\bibitem[BT77]{BriTho} L.~Brickman, E.S.~Thomas, \textit{Conformal equivalence of analytic flows}, J. Diff. Eq. \textbf{25}(3) (1977), 310--324.
 

\bibitem[CR17]{CR} A.~Ch\'eritat, C.~Rousseau, \emph{Generic $1$-parameter perturbations of a vector field with a singular point of codimension $k$}, preprint 2017, https://arxiv.org/abs/1701.03276.



\bibitem[DES05]{DS} A.~Douady,  J.F.~Estrada, P.~Sentenac, \emph{Champs de vecteurs polynomiaux sur $\mathbb{C}$}, unpublished manuscript (2005).

\bibitem[Du18]{D} A.V.~Dukov, \emph{Bifurcations of the `heart' polycycle in generic 2-parameter families}, Trans. Moscow Math. Soc. \textbf{79} (2018), 209--229.
 
\bibitem[Du53]{Duf} G.F.D.~Duff, \emph{Limit cycles and rotated vector fields},  Annals of Mathematics, Second Series, \textbf{57} (1953), 15--31.

\bibitem[GG04]{GGJ1} A. Garijo, A. Gasull, and X. Jarque, \emph{Normal forms for singularities of one dimensional holomorphic vector fields}, Electronic J. Diff. Equat. (2004), Paper No. 122, 1--7.


\bibitem[Ha66]{Ha66} O.~Hájek, \emph{Notes on meromorphic dynamical systems, I}, Czechoslovak Math. J. \textbf{16}(1) (1966), 14--27.

\bibitem[Il16]{I} Y.~Ilyashenko, \emph{Towards the General Theory of Global Bifurcations} , in Mathematical Sciences with Multidisciplinary Applications, B. Toni (Ed.), Springer, 2016, 269--299.

\bibitem[Kl19]{Kl} M. Klime\v s, \textit{Analytic classification of families of linear differential systems unfolding a resonant irregular singularity}, preprint {arXiv:1301.5228}. 	

\bibitem[KR18]{KR} M.~Klime\v{s}, C.~Rousseau, \emph{Generic 2-parameter perturbations of parabolic singular points of vector fields in $\mathbb{C}$}, Conform. Geom. Dyn. \textbf{22} (2018), 141--184.

\bibitem[MR95]{MRT} P.~Marde\v si\'c, C.~Rousseau and B. Toni, \emph{Linearization of isochronous centers}, J. Differential Equations \textbf{121} (1995) 67--108.

\bibitem[Mu02]{Mucino} J.~Muciño-Raymundo, \emph{Complex structures adapted to smooth vector fields}, Math. Annalen \textbf{322} (2002), 229--265.

\bibitem[MV95]{Mucino-Valero} J.~Muciño-Raymundo, C.~Valero-Vald\'es, \emph{Bifurcations of meromorphic vector fields on the Riemann sphere}, Ergodic Theory Dynam. Systems \textbf{15} (1995), 1211--1222.

\bibitem[MV01]{Mucino-Valero2} J.~Muciño-Raymundo, C.~Valero-Vald\'es, \emph{Geometry and dynamics of the residue theorem}, Morfismos \textbf{5} (2001), 1--16.


\bibitem[Pe75]{P} L.~Perko, \emph{Rotated vector fields and te global behavior of limit cycles for a class of quadratic systems in the plane}, J. Differential Equations \textbf{18} (1975), 63--86.



\bibitem[Ro17]{R}
C.~Rousseau, \emph{The bifurcation diagram of cubic polynomial vector fields on $\CP^1$}, Canad. Math. Bull. \textbf{60} (2017), 381--401.

\bibitem[Ro19]{R19}
C.~Rousseau, \emph{Two-parameter unfolding of a parabolic point of a vector field in $\C$ fixing the origin}, to appear in Ann. Fac. Sc. Toulouse, https://arxiv.org/abs/1812.04665.

\bibitem[St84]{Stre} K. Strebel, \textit{Quadratic differentials}, Ergebnisse der Mathematik und ihrer Grenzgebiete (3) \textbf{5}, Springer-Verlag, 1984.


\bibitem[To14]{Tom2} J.~Tomasini, \textit{G\'eom\'etrie combinatoire des fractions rationnelles}, doctoral thesis, Universit\'e d'Angers, 2014.

\bibitem[To15]{Tom1} J.~Tomasini, \textit{Topological enumeration of complex polynomial vector fields}, Ergodic Th. Dyn. Syst. \textbf{35}(4) (2015), 1315--1344.

\end{thebibliography}
\end{document}